\documentclass[11pt]{amsart}
\usepackage{palatino}
\usepackage{amsfonts}
\usepackage{amssymb}
\usepackage{amscd}
\usepackage[dvips]{graphicx}
\input xy
\xyoption{all}
\usepackage[english]{babel}
\usepackage[all]{xy}
\usepackage[breaklinks,bookmarksopen,bookmarksnumbered]{hyperref}

\pagestyle{plain} \setlength{\parindent}{.4 in}
\setlength{\textwidth}{5.8 in} \setlength{\topmargin} {-.3 in}
\setlength{\evensidemargin}{0 in}

\newtheorem{theorem}{Theorem}[section]
\newtheorem{proposition}[theorem]{Proposition}
\newtheorem{corollary}[theorem]{Corollary}
\newtheorem{lemma}[theorem]{Lemma}
\theoremstyle{definition}
\newtheorem{definition}[theorem]{Definition}
\newtheorem{remark}[theorem]{Remark}
\newtheorem{example}[theorem]{Example}

\newtheorem{conjecture/question}[theorem]{Conjecture/Question}

\newtheorem{remark/definition}[theorem]{Remark/Definition}
\newtheorem{terminology/notation}[theorem]{Terminology/Notation}

\setlength{\oddsidemargin}{0 in} \setlength{\footskip}{.3 in}
\setlength{\headheight}{.3 in} \setlength{\textheight}{8.5 in}
\setlength{\parskip}{.1 in}

\newcommand{\marginlabel}[1]%
  {\mbox{}\marginpar{\raggedleft\hspace{0pt}\bfseries\sf#1}}

\def\FF{{\mathbb F}}
\def\ZZ{{\mathbb Z}}
\def\ZZ{{\mathbb Z}}
\def\GG{{\textbf G}}
\def\QQ{{\mathbb Q}}
\def\PP{{\textbf P}}
\def\OO{\mathcal{O}}

\def\cD{\mathcal{D}}
\def\cB{\mathcal{B}}
\def\cA{\mathcal{A}}
\def\F{\mathcal{F}}
\def\P{\mathcal{P}}

\def\E{\mathcal{E}}
\def\G{\mathcal{G}}

\def\I{\mathcal{I}}

\def\cM{\mathcal{M}}
\def\cR{\mathcal{R}}
\def\rr{\overline{\mathcal{R}}}
\def\cZ{\mathcal{Z}}
\def\cU{\mathcal{U}}

\def\mm{\overline{\mathcal{M}}}

\def\zz{\overline{\mathcal{Z}}}

\def\rem{\overline{\textbf{M}}}

\def\pem{\widetilde{\textbf{M}}}

\def\ssR{\mathsf{R}}

\def\ssC{\mathsf{C}}

\def\ol{\overline}
\def\wt{\widetilde}

\def\mbb{\mathbb}

\def\rem{\ol{\mathsf{M}}}
\def\ssrr{\ol{\mathsf{R}}}

\def\pem{\mbb{\mathsf{M}}}

\newcommand{\tensor}{\otimes}
\newcommand{\isom}{\cong}
\newcommand{\codim}{\operatorname{codim}}

\newcommand{\lideal}{\langle}
\newcommand{\rideal}{\rangle}

\newcommand{\Tor}{\operatorname{Tor}}
\newcommand{\Spec}{\operatorname{Spec}}

\newcommand{\ch}{\operatorname{char}}

\newcommand{\SL}{\operatorname{SL}}

\newcommand{\Pic}{\operatorname{Pic}}
\newcommand{\sPic}{{\mathsf{Pic}}}

\newcommand{\Aa}{{\mathbb A}}
\newcommand{\sHom}{\operatorname{\mathcal Hom}} 
\newcommand{\sExt}{\operatorname{\mathcal Ext}} 
\newcommand{\Hom}{\operatorname{Hom}} 
\newcommand{\Ext}{\operatorname{Ext}} 
\newcommand{\Sym}{\operatorname{Sym}} 

\begin{document}
\title{Syzygies of torsion bundles and the geometry  of the level $\ell$ modular variety over $\mm_g$}

\author[A.~Chiodo]{Alessandro Chiodo}
\address{Universit{\'e} de Grenoble,
U.M.R.~CNRS 5582,
Institut Fourier,
\newline \indent
BP 74, 38402
Saint Martin d'Hères,
France }
 \email{{\tt
chiodo@ujf-grenoble.fr}}

\author[D. Eisenbud]{David Eisenbud}

\address{University of California, Department of Mathematics, Berkeley, CA 94720, USA}
\email{{\tt eisenbud@math.berkeley.edu}}

\author[G. Farkas]{Gavril Farkas}

\address{Humboldt Universit\"at zu Berlin, Institut f\"ur Mathematik, Unter den Linden 6,\newline
\indent
10099 Berlin, Germany }
\email{{\tt farkas@math.hu-berlin.de}}
\thanks{}

\author[F.-O. Schreyer]{Frank-Olaf Schreyer}
\address{Uversit\"at des Saarlandes, Campus E2 4, D-66123 Saarbr\"ucken, Germany
}
 \email{{\tt schreyer@math.uni-sb.de}}

\markboth{A. CHIODO, D. EISENBUD, G. FARKAS and F.-O. SCHREYER}
{The Kodaira dimension of the Prym moduli space of level three}
\maketitle

\begin{abstract}

We formulate, and in some cases prove, three statements concerning the purity
or, more generally, the \emph{naturality} of the resolution of various modules one can attach to a generic curve of genus g and a torsion point of $\ell$ in its Jacobian. These statements can be viewed an analogues of Green's Conjecture and we verify them computationally for bounded genus. We then compute the cohomology class of the corresponding non-vanishing locus in the moduli space $\cR_{g,\ell}$ of twisted level $\ell$ curves of genus g and use this to derive results about the birational geometry of $\cR_{g, \ell}$. For instance, we prove that $\cR_{g,3}$ is a variety of general type when $g>11$ and the Kodaira dimension of $\cR_{11,3}$ is greater than or equal to 19. In the last section we explain probabilistically the unexpected failure of the Prym-Green conjecture in genus 8 and level 2.
\end{abstract}


One way of proving that the moduli space of curves of odd genus $g\gg 0$ has
general type is via the divisor of curves whose canonical ring has ``extra'' syzygies. In this paper we apply the same philosophy to prove that, in certain cases, the moduli spaces of curves with torsion bundles---the modular varieties of the title---are also of general type. These modular varieties are natural generalizations, in higher genus,
of the much studied modular curves
$X_1(\ell):=\mathcal{H}/\Gamma_1(\ell)$ classifying elliptic curves together with an $\ell$-torsion point in their Jacobian.

To explain what we mean by ``extra'' syzygies, consider a finitely generated graded
module $M$ over a polynomial ring $S = \mathbb C[x_{0}, \dots,x_{n}]$.
Such a module has a minimal  free resolution of the form
$$
0\leftarrow M \leftarrow F_{0}\leftarrow\cdots \leftarrow F_{i}\leftarrow \cdots,
$$
where $F_{i} = \sum_{j}S(-i-j)^{b_{i,j}}$. The numbers $b_{i,j} = b_{i,j}(M)$,
called the graded Betti numbers of $M$, are
uniquely defined; in fact $b_{i,j}$ is the dimension of the degree $i+j$
component of ${\rm Tor}^{S}_{i}(M, \mathbb C)$. We say that the
resolution of $M$ is \emph{natural} if, for each $j$, the number $b_{i,j-i}(M)$
is nonzero for at most one value of $i$, that is, at most one Betti number on each diagonal of the
Betti diagram of $M$ is non-zero. In an irreducible flat family of modules $M_{\lambda}$ the
$b_{i,j}(M_{\lambda})$ are semicontinuous, and simultaneously take
on minimum values on an open set.
We say that $M_{\lambda}$ has ``extra'' syzygies
when one of the values $b_{i,j}(M_{\lambda})$ is larger than
this minimum. If the resolution of some $M_{\lambda}$ is
natural in the sense above, then its Betti numbers have the minimum value.

\vskip 4pt
For example, given a curve $C$, a line bundle $L\in \mbox{Pic}(C)$ and a sheaf $\F$ on $C$, we consider the
$S:=\mbox{Sym } H^0(C, L)$-module
$$
\Gamma_C(\F, L):=\bigoplus_{q\in \ZZ} H^0(C, \F\otimes L^{\otimes q}).
$$
Following notation introduced by Mark Green,
the vector space ${\rm Tor}_{i}^{S}(\Gamma_{C}(\F,L), \mathbb C)_{i+j}$ is often
written $K_{i,j}(C;\F,L)$. Green's Conjecture \cite{G} for generic curves of genus $g$ (proved in \cite{V}), asserts that the resolution of the
canonical ring of $C$ or, equivalently, the resolution of
$\Gamma_{C}(\OO_{C},K_{C})$, is natural. For odd genus, it follows that the resolution is pure, which in Green's notation says
\begin{equation}\label{green}
K_{\frac{g-1}{2}, 1}(C,\mathcal O_{C}, K_C)=0.
\end{equation}
The locus of curves
whose canonical ring has extra syzygies is a divisor whose support
is the Hurwitz locus $\cM_{g, \frac{g+1}{2}}^1$ used in \cite{HM} to prove that these
moduli spaces have general type.

Generalizing the case above, we study modules of the form $\Gamma_{C}(\xi, K_C\otimes \eta)$ where
$\eta$ and $\xi$ are line bundles of degree $0$ on a smooth curve $C$. As a first step, we prove:

\begin{theorem}\label{general bundles}
Let $C$ be a general curve of genus $g$, $\eta\in \mathrm{Pic}^0(C)[\ell]$ a torsion bundle of order $\ell\geq 2$ and $\xi\in \mathrm{Pic}^0(C)$ a general line bundle of degree
degree $0$. Then the module $\Gamma_{C}(\xi, K_C\otimes \eta)$ has natural resolution.
\end{theorem}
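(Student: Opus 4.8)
The plan is to degenerate to a curve on which the relevant cohomology can be controlled and then invoke semicontinuity of Betti numbers. Since naturality is an open condition in flat families, it suffices to exhibit a single pair $(C,\eta,\xi)$ — possibly a nodal or otherwise degenerate limit — for which $\Gamma_C(\xi, K_C\otimes\eta)$ has a resolution with at most one nonzero Betti number on each diagonal. The natural choice is a chain or tree of elliptic curves, or a rational curve with $g$ nodes, where $K_C\otimes\eta$ becomes a line bundle of degree $2g-2$ that is nonspecial (the twist by a nontrivial torsion $\eta$ kills the section coming from $K_C$ on enough components), so that $S=\Sym H^0(C,K_C\otimes\eta)$ is a polynomial ring in $g-1$ variables and $\Gamma_C(\xi,K_C\otimes\eta)$ is a module whose Hilbert function is that of a module of finite length quotients plus a free part. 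The key point will be to choose the degeneration so that the module in question is, up to a shift, the coordinate ring of a set of points in general position or a closely related monomial-type module, whose resolution can be read off directly.

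First I would fix the numerics: compute the Hilbert function of $\Gamma_C(\xi,K_C\otimes\eta)$, noting $\chi(\xi\otimes(K_C\otimes\eta)^q)=(2g-2)q$ for $q\geq 1$ and that $h^0$ vanishes for $q\leq 0$ when $\xi$ is general of degree $0$ and $\eta$ nontrivial, while $h^1$ vanishes for $q\geq 2$; the case $q=1$ requires $h^0(\xi\otimes K_C\otimes\eta)=g-1$, which holds for general $\xi$. This pins down the Hilbert polynomial and hence the ``expected'' diagonal-by-diagonal shape of the Betti table. Second, I would identify which diagonals could a priori carry two nonzero Betti numbers — these are governed by a duality (Serre duality / the fact that $\Gamma_C(\xi,K_C\otimes\eta)$ and $\Gamma_C(\xi^{-1}\otimes\eta^{\vee}\cdots)$ pair up) — and reduce naturality to the vanishing of a single ``critical'' Koszul cohomology group $K_{i_0,1}$ or $K_{i_0,2}$, exactly as \eqref{green} does in the classical case.

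The heart of the argument is then the vanishing of this critical $K_{p,1}$ (or $K_{p,2}$) group on a well-chosen degenerate curve. I would use the Hirschowitz–Ramanan / Aprodu style approach: build $C$ from a rational curve by gluing $g$ pairs of points, choose $\eta$ as a torsion line bundle on the normalization descending appropriately, and compute Koszul cohomology via the combinatorics of the scrollar or secant structure — alternatively, use a limit linear series / Eisenbud–Schreyer ``exterior algebra'' computation, realizing $K_{p,q}$ as the cohomology of an explicit complex of vector spaces attached to the dual scroll. The computational verification mentioned in the abstract (bounded genus via Macaulay2) provides the base cases and a sanity check on the numerics; the induction on $g$ would proceed by degenerating $C$ to $C'\cup_x E$ with $E$ elliptic, relating $K_{p,q}(C,\xi,K_C\otimes\eta)$ to $K_{p,q}$ and $K_{p-1,q}$ on the lower-genus piece via the Mayer–Vietoris-type exact sequence for Koszul cohomology of a nodal union.

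The main obstacle I anticipate is precisely this last vanishing on the degenerate curve: unlike the canonical case, the module $\Gamma_C(\xi,K_C\otimes\eta)$ is not the homogeneous coordinate ring of a projectively normal variety (the embedding by $K_C\otimes\eta$ is a genus-$g$ curve of degree $2g-2$ in $\pp[g-2]$, behaving like a general rather than canonical curve), so the usual rigidity of the canonical resolution is unavailable and one must genuinely track two twisting parameters $\eta$ and $\xi$ simultaneously. Controlling the interaction of the torsion condition on $\eta$ with the genericity of $\xi$ under degeneration — making sure that neither $H^0$ jumps nor the torsion order collapses in the limit — is the delicate bookkeeping on which the whole argument rests.
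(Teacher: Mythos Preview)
Your overall strategy is right: reduce naturality to the vanishing of a single critical Koszul group (this is Lemma \ref{natural} in the paper, and your numerical setup matches it), then verify that vanishing on a well-chosen special curve and invoke semicontinuity. The gap is in the specialization itself. None of the degenerations you propose --- $g$-nodal rational curves, chains of elliptic curves, or an inductive attachment $C'\cup_x E$ --- gives a situation in which $K_{\lfloor (g-1)/2\rfloor,1}(C;\xi,K_C\otimes\eta)$ can actually be computed. The $g$-nodal rational curve is what the paper uses for \emph{computational} verification in bounded genus (Section~4), but there is no closed-form argument there; the elliptic-tail induction would require relating the kernel bundle $M_{K_C\otimes\eta}$ on $C$ to that on $C'$, and these bundles do not restrict well across a node, so the Mayer--Vietoris heuristic you sketch does not produce a usable exact sequence in Koszul cohomology.

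The paper's proof (Theorem \ref{bielliptic}) uses a completely different specialization: a \emph{bielliptic} curve $f:C\to E$, with $\eta=f^*\epsilon$ pulled back from an $\ell$-torsion point $\epsilon$ on $E$. The point is that $K_C\otimes\eta=f^*(\delta\otimes\epsilon)$ for the branch bundle $\delta$, so the kernel bundle satisfies $M_{K_C\otimes\eta}=f^*M_{\delta\otimes\epsilon}$, and the critical group becomes
\[
K_{i,1}(C;\xi,K_C\otimes\eta)=H^0\bigl(E,\textstyle\bigwedge^i M_{\delta\otimes\epsilon}\otimes\delta\otimes\epsilon\otimes f_*\xi\bigr).
\]
Now $M_{\delta\otimes\epsilon}$ is stable on $E$, and one checks (Lemma \ref{stable_push}) that $f_*\xi$ is (semi)stable for general $\xi\in\Pic^0(C)$; hence the bundle on the right is a general semistable bundle of slope $0$ on an elliptic curve, which has no sections. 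This reduces the whole problem to a one-line fact about vector bundles on elliptic curves --- an idea your proposal does not contain and which your candidate degenerations would not produce.
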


We shall use a refinement of this result to compute the Kodaira dimension of the
moduli spaces $\cR_{g, \ell}$ parametrizing level $\ell$ curves $[C, \eta]$, where $C$ is a smooth curve of genus $g$ and $\eta\in \mbox{Pic}^0(C)$ is a torsion line bundle of order $\ell$.
In particular, we shall prove:

\begin{theorem}\label{koddim}
$\cR_{g, 3}$ is a variety of general type for $g\geq 12$. Furthermore, the Kodaira dimension of $\cR_{11, 3}$ is at least $19$.
\end{theorem}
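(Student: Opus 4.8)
The plan is to run the standard ``moduli of general type'' machine on the compactification $\overline{\cR}_{g,3}$ of $\cR_{g,3}$ by twisted level curves: to produce an effective divisor of small slope built from the failure of the naturality predicted by Theorem~\ref{general bundles}, and to play it off against the canonical class. \emph{First}, one must know that $\overline{\cR}_{g,3}$ has only canonical singularities --- in the range $g\ge 11$ it has finite quotient singularities away from codimension two and no non-canonical ones of ``elliptic tail'' type --- which follows from the local description of the stack of twisted curves together with the Reid--Tai inequality, exactly as in the Harris--Mumford analysis of $\mm_g$ and the Farkas--Ludwig analysis of $\overline{\cR}_{g,2}$. Granting this, pluricanonical forms extend to any resolution, so it suffices to prove that $K_{\overline{\cR}_{g,3}}$ is big.

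\emph{Second}, I would compute $K_{\overline{\cR}_{g,3}}$ explicitly. Let $\pi\colon\overline{\cR}_{g,3}\to\mm_g$ be the forgetful map; its boundary over $\delta_0$ and over each $\delta_i$ decomposes into the standard components (\'etale over the boundary of $\mm_g$, or supported where the level structure degenerates), with the ramification of $\pi$ read off from the orders of the automorphisms appearing in the twisted-curve charts. Combining this with $K_{\mm_g}=13\lambda-2\delta_0-3\delta_1-\cdots$ yields an explicit expression for $K_{\overline{\cR}_{g,3}}$ in terms of $\lambda=\pi^\ast\lambda$ and the boundary divisors of $\overline{\cR}_{g,3}$.

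\emph{Third} --- the crux --- comes the syzygy divisor. For each $g\ge 12$ (the case $g=11$ is borderline and treated afterwards), Theorem~\ref{general bundles} in the refined form --- in which $\xi$ is specialized to a natural torsion twist of $\eta$, so that the module $\Gamma_C(\xi,K_C\otimes\eta)$ depends only on $[C,\eta]\in\cR_{g,3}$, and the Betti table of a general member is pinned down exactly --- guarantees that $K_{i,1}(C;\xi,K_C\otimes\eta)=0$ for a general $[C,\eta]$ and the relevant index $i=i(g)$. I would define $\mathfrak D\subset\cR_{g,3}$ as the locus where this Koszul group is nonzero, and realize $\overline{\mathfrak D}$ inside $\overline{\cR}_{g,3}$ as the degeneracy locus of a morphism $\phi\colon\mathcal A\to\mathcal B$ of vector bundles of \emph{equal rank}, obtained by pushing the appropriate strand of the Koszul complex of $\OO_{\cC}$ with respect to $K_{\cC/\overline{\cR}_{g,3}}\otimes\eta$ forward along the universal curve $\cC\to\overline{\cR}_{g,3}$ --- the equality of ranks being precisely the numerical coincidence that makes these genera critical, and non-degeneracy at a general point being Theorem~\ref{general bundles}. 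Then one must (a) check that no boundary divisor of $\overline{\cR}_{g,3}$ is contained in $\overline{\mathfrak D}$, by exhibiting on each boundary component one twisted level curve whose limiting Koszul group has the expected dimension, using Chiodo's description of line bundles of finite order on stable curves, and (b) compute $[\overline{\mathfrak D}]=c_1(\mathcal B)-c_1(\mathcal A)$ by Grothendieck--Riemann--Roch on $\cC\to\overline{\cR}_{g,3}$, obtaining a class of the form $a\lambda-(\text{effective boundary})$ whose slope is small.

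\emph{Fourth}, the numerical endgame. For $g\ge 12$, check that there are positive rationals $s,t$ and an effective boundary class $E$ with $K_{\overline{\cR}_{g,3}}=s\,[\overline{\mathfrak D}]+t\,\lambda+E$; since $\lambda=\pi^\ast\lambda$ is the pullback of the Hodge class on $\mm_g$, which is itself the pullback of an ample class from the Satake compactification and has Iitaka dimension $3g-3=\dim\cR_{g,3}$, it is big, hence so is $K_{\overline{\cR}_{g,3}}$, and $\cR_{g,3}$ is of general type. For $g=11$ the analogous computation gives $K_{\overline{\cR}_{11,3}}=s\,[\overline{\mathfrak D}]+F$ with $F$ of the shape $t\lambda-(\text{boundary})$, no longer visibly big; multiplication by a power of the section of $[\overline{\mathfrak D}]$ gives $\kappa(K_{\overline{\cR}_{11,3}})\ge\kappa(F)$, and one bounds $\kappa(F)$ below by $19$ from the precise coefficients in the comparison combined with the semiampleness of $\lambda$ on $\mm_{11}$. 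I expect Step three to be the main obstacle: forcing $\overline{\mathfrak D}$ to be a genuine divisor requires a careful analysis of Koszul cohomology of torsion bundles on the boundary via Chiodo's stable models, and the Grothendieck--Riemann--Roch computation of $c_1(\mathcal B)-c_1(\mathcal A)$ is delicate; by contrast the final comparison is a mechanical, if lengthy, linear-algebra check, with only the $g=11$ bound needing extra care.
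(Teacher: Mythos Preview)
Your architecture is right (extension of pluricanonical forms from \cite{CF}, explicit $K_{\overline{\cR}_{g,3}}$, a Koszul degeneracy divisor, slope comparison), but two steps as written do not go through.

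First, the ``refined form of Theorem~\ref{general bundles}'' you invoke is exactly the hard part, not a consequence. Theorem~\ref{general bundles} concerns a \emph{general} $\xi\in\Pic^0(C)$; to obtain a divisor on $\cR_{g,3}$ one must specialize $\xi$ to a fixed torsion power of $\eta$, and the generic non-degeneracy of $\phi$ then becomes Theorem~\ref{thmC} (for $\Gamma_C(\eta,K_C)$, proved by degeneration to hyperelliptic curves) or Conjectures~A and~B (for $\Gamma_C(\eta^k,K_C\otimes\eta)$, which the paper only \emph{verifies computationally} for the relevant small $g$ and $\ell=3$). You cannot deduce these from the general-$\xi$ statement. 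Relatedly, a \emph{single} syzygy divisor does not suffice: for odd $g=2i+1$ the paper must form a positive combination of $[\overline{\cU}_{g,3}]$ with the \emph{pull-back of the Hurwitz divisor} $f^*[\overline{\cM}^1_{g,i+1}]$ from $\mm_g$ to reach slope below $13$; for even $g$ it uses $[\overline{\cZ}_{g,3}]$ (Conjecture~A) when $g\ge 14$, $g\neq 16$, uses $[\overline{\cD}_{g,3}]$ (Conjecture~B) for $g=16$, and for $g=12$ needs a combination of \emph{both} $[\overline{\cZ}_{12,3}]$ and $[\overline{\cD}_{12,3}]$.

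Second, your $g=11$ argument is incorrect. Semiampleness (or bigness) of $\lambda$ on $\mm_{11}$ gives $\kappa(\lambda)=30$, not $19$, and says nothing useful about a class of the form $t\lambda-(\text{boundary})$. The paper's mechanism is different: the specific combination $\alpha[\overline{\cU}_{11,3}]+\beta f^*[\overline{\cM}^1_{11,6}]$ has \emph{exactly} the slope of $K_{\overline{\cR}_{11,3}}$, so $K$ is effectively represented by this class and one gets
\[
\kappa(\overline{\cR}_{11,3})\ \ge\ \kappa\bigl(\overline{\cR}_{11,3},\,f^*[\overline{\cM}^1_{11,6}]\bigr)\ \ge\ \kappa\bigl(\mm_{11},\,[\overline{\cM}^1_{11,6}]\bigr)=19,
\]
the last equality being the Farkas--Popa result on the Iitaka dimension of the Hurwitz divisor on $\mm_{11}$. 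The number $19$ is a feature of that specific divisor, not of $\lambda$.
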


It is known that $\cR_{g, 3}$ is rational for $g\leq 4$, see \cite{BC}, \cite{BV}. The level $\ell$ modular variety that has received most attention so far is the moduli space $\cR_{g, 2}$ classifying Prym varieties of dimension $g-1$.
It is shown in \cite{FL} that $\cR_{g, 2}$ is a variety of general type for $g>13$, whereas $\cR_{g, 2}$ is unirational for $g\leq 7$, see \cite{FV2} and references therein.
\vskip 3pt
We now explain the statements concerning the naturality of the resolution of certain modules one associates to a level curve $[C, \eta]\in \cR_{g, \ell}$. Such predictions can be made for any $g$ and, just like for Green's Conjecture, in about half of the cases they amount to saying that the resolution is actually \emph{pure}. In such case, the locus of points $[C, \eta]$ where the corresponding resolution is not pure is a \emph{virtual} divisor on $\cR_{g, \ell}$, that is, the degeneracy locus of a morphism between vector bundles of the same rank over the stack $\ssR_{g, \ell}$ which is coarsely represented by $\cR_{g, \ell}$. Proving the corresponding syzygy conjecture amounts to showing that the respective degeneracy locus is a \emph{genuine} divisor on $\cR_{g, \ell}$.

We fix  $[C, \eta]\in \cR_{g, \ell}$ and let $L:=K_C\otimes\eta$ be the paracanonical line bundle.
\vskip 3pt
\noindent {\bf{A. Prym-Green Conjecture.}} \emph{For a general level $\ell$ curve  $[C,\eta]\in \cR_{g, \ell}$ of genus $g\ge 6$, the  homogeneous coordinate ring of the paracanonical curve $\phi_{K_C\otimes \eta}:C \hookrightarrow  \PP^{g-2}$ has a natural resolution. Equivalently, in even genus, the resolution is pure and the paracanonical curve satisfies property $(N_{\frac{g}{2}-3})$, that is,
\begin{equation}\label{prymgreen}
K_{\frac{g}{2}-2, 1}\bigl(C, K_C\otimes \eta\bigr)=0,
\end{equation}
whereas in odd genus
\begin{equation}\label{prymgreenodd}
K_{\frac{g-3}{2}, 1}\bigl(C, K_C\otimes \eta\bigr)=0 \ \ \mbox{ and }\ \ K_{\frac{g-7}{2}, 2}(C, K_C\otimes \eta)=0.
\end{equation}
}
The name of the conjecture is justified by the analogy with Green's Conjecture. In Section 4, we verify the Prym-Green Conjecture for all values of $g\leq 18$ and small $\ell$, with the exception of $2$-torsion in genus $g=8$ and $g=16$. Our findings suggest that very likely, for $2$-torsion and $g$ a multiple of $8$ (or perhaps a power of $2$), the Prym-Green Conjecture might actually be false. (With our methods, an experiment with $g=24$ would take about 6500 years.) Our verification of Conjecture A is computational via the use of \emph{Macaulay2}. We verify condition (\ref{prymgreen}) for $g$-nodal rational curves over a finite field (see Section 4 for details).  For $g:=2i+6$, we denote by
$$\cZ_{g, \ell}:=\Bigl\{[C, \eta]\in \cR_{g, \ell}: K_{i+1, 1}(C, K_C\otimes \eta)\neq 0\Bigr\},$$
the failure locus of Conjecture A. This is a virtual divisor on $\cR_{g, \ell}$.

\vskip 2pt

The second conjecture we address concerns the resolution of torsion bundles.

\noindent
{\bf B. Torsion Bundle Conjecture.} \emph{Let $[C,\eta]\in \cR_{g, \ell}$ be a general level $\ell$ curve of \emph{even} genus $g\geq 4$.
For each $1\leq k\leq \ell-2$, the module $\Gamma_C(\eta^{\otimes k}, L)$ has natural resolution, unless $$\eta^{\otimes (2k+1)}=\OO_C \  \mbox{ and }g \equiv 2\ \  \mathrm{mod}\ 4 \mbox{ and } \ {g-3 \choose {\frac{g}{2}-1}} \equiv 1 \ \mathrm{mod}\ 2.$$ Equivalently, one has the vanishing statement
\begin{equation}\label{conjectB}
K_{\frac{g}{2}-1, 1}(C; \eta^{\otimes k}, K_C\otimes \eta)=0.
\end{equation} In the exceptional cases there is precisely one extra syzygy}.

\vskip 3pt
Theorem \ref{general bundles} can be viewed as a weak form of Conjecture B. The first exceptional genera in the Conjecture B are $g=6,10,18,34$ or $66$. For levels $\ell\geq 3$, we set $g:=2i+2$ and denote the corresponding virtual divisor by
$$\mathcal{D}_{g, \ell}:=\Bigl\{[C, \eta]\in \cR_{g, \ell}: K_{i, 1}(C; \eta^{\otimes (\ell-2)}, K_C\otimes \eta)\neq 0\Bigr\}.$$
Conjecture B can be reformulated in the spirit of the \emph{Minimal Resolution Conjecture} \cite{FMP} for points on paracanonical curves as follows.  We view a divisor $Z\in |K_C\otimes \eta^{\otimes (1-k)}|$  as a $0$-dimensional subscheme of $\phi_L(C)\subset \PP^{g-2}$. Then using \cite{FMP} Proposition 1.6, Conjecture B is equivalent to the statement $b_{\frac{g}{2}, 1}(Z)=b_{\frac{g}{2}-1, 2}(Z)=0$, which amounts to the minimality of the number of syzygies of $Z\subset \PP^{g-2}$.

The exceptions to Conjecture B can be explained by (surprising) symmetries in the Koszul differentials (see Section 4). We verify Conjecture B for genus $g\leq 16$ and small level. In any of these cases $\mathcal{D}_{g, \ell}$ is a divisor on $\cR_{g, \ell}$.

\vskip 3pt
A prediction about the naturality of the resolution of the module $\Gamma_C(\eta, K_C)$ can also be made. This time we expect no exceptions and this turns out to be the case:

\begin{theorem}\label{thmC}
Let $[C,\eta]\in \cR_{g, \ell}$ be a general level $\ell$ curve of genus $g\ge 3$.
Then the resolution of $\Gamma_C(\eta, K_C)$ is natural (respectively pure for odd $g$), that is,
\begin{equation}\label{mrc}
K_{\lfloor\frac{g-1}{2}\rfloor, 1}\bigl(C; \eta, K_C\bigr)=0.
\end{equation}
\end{theorem}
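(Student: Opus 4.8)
The statement is about the module $\Gamma_C(\eta, K_C)$, which differs from the canonical ring only by twisting the coefficient sheaf by the torsion bundle $\eta$. Note that $\Gamma_C(\eta, K_C) = \bigoplus_q H^0(C, \eta \otimes K_C^{\otimes q})$ is a module over $S = \mathrm{Sym}\, H^0(C, K_C)$, so the relevant curve in $\PP^{g-1}$ is the canonical curve $\phi_{K_C}(C)$, and we are studying the syzygies of a nonzero degree-$0$ twist of the structure sheaf on it. The plan is to follow the same degeneration-to-the-boundary strategy used for Green's Conjecture and for Theorem \ref{general bundles}: since Betti numbers are upper semicontinuous in flat families, and $\cR_{g,\ell}$ is irreducible, it suffices to exhibit a single level curve $[C_0,\eta_0]$ — ideally a stable limit living on the boundary of $\ssR_{g,\ell}$, for instance a chain or a nodal curve — for which the vanishing (\ref{mrc}) holds. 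Equivalently, by the standard duality for Koszul cohomology of line bundles on a curve, $K_{\lfloor (g-1)/2\rfloor,1}(C;\eta,K_C)$ is dual to a Koszul group $K_{p,2}$ for an appropriate $p$, so one can set the problem up in whichever degree is computationally more convenient.

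First I would reduce (\ref{mrc}) to a statement that can be checked on a degenerate curve. The cleanest model is a general $\ell$-torsion point on a general $g$-nodal rational curve, or a curve built by gluing low-genus pieces; on such a curve the paracanonical/canonical embedding and the module $\Gamma$ degenerate compatibly, and Koszul cohomology can be computed via a Mayer–Vietoris / exact-sequence argument from the pieces. Second, one reduces the genus: using the standard ``adding a base point'' or ``attaching an elliptic tail'' exact sequences relating $K_{i,j}(C;\eta,K_C)$ on a curve of genus $g$ to Koszul groups on a curve of genus $g-1$ (or $g-2$), one sets up an induction. The inductive step needs a vanishing like $H^1(C, \eta \otimes K_C^{\otimes q}) = 0$ for $q \geq 1$ (immediate since $\eta$ has degree $0$ and $K_C^{\otimes q}$ is sufficiently positive, except for controlled low-degree terms where $\eta \otimes K_C$ could conceivably have a section only if $\eta \isom \OO_C$, excluded) together with a base-point-freeness/normal-generation input, so that the module is generated in the expected degrees.

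Third — and this is where the real content lies — I would establish a base case by an explicit computation. The natural candidates are small genus ($g=3,4,5$), where $\lfloor (g-1)/2\rfloor$ is $1$ or $2$, so (\ref{mrc}) becomes the statement that the paracanonical module has no linear syzygies (for $g=3$) or the analogous small statement; for $g=3$ this is essentially the assertion that a general pair $(C,\eta)$ with $C$ a plane quartic gives $K_{1,1}(C;\eta,K_C)=0$, which one can verify directly, e.g. by a dimension count on $\mathrm{Tor}$ or a Macaulay2 check over a finite field as the authors do elsewhere in the paper. One then also needs a parity base case handling both residues of $g$ mod $2$, since the claimed vanishing index jumps with the floor function.

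The main obstacle I expect is the inductive step near the ``middle'' of the resolution: controlling $K_{\lfloor(g-1)/2\rfloor,1}$ is exactly the place where Koszul cohomology is hardest, because neither Green's vanishing theorem (which handles $K_{i,1}$ for $i$ small) nor the duality-based nonvanishing results pin it down for free. Concretely, the degeneration argument must be arranged so that the limit curve is \emph{general enough} in $\ssR_{g,\ell}$ — in particular the limit torsion bundle must not acquire extra sections and must remain ``generic'' with respect to the Koszul complex — while still being \emph{degenerate enough} that the Koszul groups decompose into computable pieces. Balancing these two requirements, and verifying that no exceptional extra syzygy appears (the theorem, unlike Conjecture B, claims none do), is the crux; the presence of sporadic exceptions in Conjectures A and B for special $g$ is a warning that the vanishing here is not automatic and that the degeneration must be chosen with care to avoid the bad congruence conditions.
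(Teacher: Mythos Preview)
Your proposal has a genuine gap: it misses the key specialization that makes the problem tractable, namely the reduction to \emph{hyperelliptic} curves. The paper's argument runs as follows. On a hyperelliptic curve $C$ with Weierstrass point $p$ and hyperelliptic bundle $A=\OO_C(2p)$, the canonical kernel bundle splits completely: $M_{K_C}^{\vee}\cong A^{\oplus (g-1)}$. Hence every exterior power $\bigwedge^{i} M_{K_C}$ is a direct sum of copies of a single line bundle, and via the identification $K_{i,1}(C;\eta,K_C)=H^0\bigl(C,\bigwedge^{i}M_{K_C}\otimes K_C\otimes\eta\bigr)$ the vanishing \eqref{mrc} collapses to the elementary line-bundle statement $H^0\bigl(C,\eta((g-1)p)\bigr)=0$ (respectively $H^0\bigl(C,\eta((g-2)p)\bigr)=0$ for even $g$). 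This last statement (Theorem~\ref{hyper}) is then proved by an easy induction on $g$: one attaches a pointed elliptic tail $(E,p')$ at a Weierstrass point of a genus $g-1$ hyperelliptic curve, chooses the torsion class on $E$ to avoid the single bad value $\OO_E\bigl((g-1)(p'-p)\bigr)$, and applies a limit linear series argument.

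Your suggested degenerations to $g$-nodal rational curves or generic chains do not buy this simplification: on such curves $M_{K_C}$ does not split, so the middle wedge power remains a genuinely complicated vector bundle, and you are left facing exactly the obstacle you yourself flag at the end. The elliptic-tail induction you sketch is close in spirit to the paper's, but the paper runs it \emph{inside the hyperelliptic locus} after the Koszul problem has already been reduced to an $h^0$ of a line bundle; without that preliminary reduction there is no clean comparison between the Koszul group in genus $g$ and in genus $g-1$. It is also worth noting that the hyperelliptic specialization is counterintuitive, since hyperelliptic curves are maximally special for Green's Conjecture on the untwisted canonical ring; the point here is that the twist by $\eta$ is what is being tested, and the splitting of $M_{K_C}$ is precisely what isolates the contribution of $\eta$.
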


The proof of  Theorem \ref{thmC} is by specialization to hyperelliptic curves. Via the following equality of cycles in the Jacobian of any curve of \emph{odd} genus, see \cite{FMP},
$$
C_{\frac{g-1}{2}}-C_{\frac{g-1}{2}}=\Bigl\{\xi\in \mbox{Pic}^0(C): K_{\frac{g-1}{2}, 1}(C; \xi, K_C)\neq 0\Bigr\},
$$
where the left hand side denotes the top difference variety of $C$,  Theorem \ref{thmC} admits the following geometrically transparent reformulation:

\begin{corollary}
For a general curve $C$ of odd genus, the top difference variety $C_{\frac{g-1}{2}}-C_{\frac{g-1}{2}}$ contains no non-trivial $\ell$-torsion points, for any $\ell\geq 2$.
\end{corollary}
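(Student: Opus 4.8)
The plan is to deduce the statement from Theorem~\ref{thmC} together with the cycle identity recalled above. Fix $g\geq 3$ odd. Any non-trivial $\ell$-torsion point $\eta\in\mathrm{Pic}^0(C)$ has some exact order $d>1$ dividing $\ell$, and then the pair $[C,\eta]$ is a point of $\cR_{g,d}$. By the identity
\[
C_{\frac{g-1}{2}}-C_{\frac{g-1}{2}}=\Bigl\{\xi\in\mathrm{Pic}^0(C):K_{\frac{g-1}{2},1}(C;\xi,K_C)\neq 0\Bigr\}
\]
the condition $\eta\in C_{\frac{g-1}{2}}-C_{\frac{g-1}{2}}$ is equivalent to $K_{\frac{g-1}{2},1}(C;\eta,K_C)\neq 0$, i.e.\ to the pair $[C,\eta]$ lying in the locus $\Sigma_d\subseteq\cR_{g,d}$ on which this Koszul cohomology group is nonzero. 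Since Koszul cohomology jumps along a closed set, $\Sigma_d$ is closed, and Theorem~\ref{thmC} says precisely that $\Sigma_d$ is a \emph{proper} closed subset of $\cR_{g,d}$.

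It then remains to upgrade ``a general pair $[C,\eta]$'' to ``a general curve $C$ together with \emph{every} non-trivial $\ell$-torsion $\eta$''. I would do this using that $\cR_{g,d}$ is irreducible and that the forgetful map $\pi_d\colon\cR_{g,d}\to\cM_g$ is finite and dominant. Irreducibility is classical: the monodromy of the universal curve over $\cM_g$ surjects onto $\mathrm{Sp}(2g,\ZZ/d)$, which acts transitively on the primitive vectors of $(\ZZ/d)^{2g}$, hence on the torsion points of exact order $d$. Being a proper closed subset of the irreducible variety $\cR_{g,d}$, the locus $\Sigma_d$ satisfies $\dim\Sigma_d<\dim\cR_{g,d}=\dim\cM_g$, so its image $W_d:=\pi_d(\Sigma_d)$ (closed, as $\pi_d$ is finite) is a proper closed subset of $\cM_g$. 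For $C$ in the dense open set $\cM_g\setminus\bigcup_{d\mid\ell,\,d>1}W_d$, which is a \emph{finite} union, no $[C,\eta]$ with $\eta$ of order dividing $\ell$ lies in any $\Sigma_d$; equivalently $C_{\frac{g-1}{2}}-C_{\frac{g-1}{2}}$ contains no non-trivial $\ell$-torsion point. Intersecting over all $\ell\geq 2$ yields the assertion for a very general curve $C$ (and on a dense open of $\cM_g$ once $\ell$ is fixed).

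The substance of the argument is Theorem~\ref{thmC} itself; once that is granted, the only step with any content is the observation that the ``bad'' locus $\Sigma_d$, being a \emph{proper} closed subset, cannot dominate $\cM_g$. This is exactly where the irreducibility of $\cR_{g,d}$ --- equivalently, the transitivity of the symplectic monodromy on primitive $d$-torsion vectors --- is needed: without it, $\Sigma_d$ could a priori be an entire sheet of the finite cover $\pi_d$, lying over all of $\cM_g$. The remaining subtlety, that one must fix $\ell$ to obtain a dense open rather than merely a very general locus, is a harmless consequence of the torsion points forming a countable set.
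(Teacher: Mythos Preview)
Your argument is correct and is exactly the natural expansion of what the paper leaves implicit: the paper presents the Corollary as an immediate reformulation of Theorem~\ref{thmC} via the cycle identity from \cite{FMP}, without spelling out the passage from ``general pair $[C,\eta]\in\cR_{g,d}$'' to ``general curve $C$ together with \emph{every} non-trivial torsion point''. Your use of the irreducibility of $\cR_{g,d}$ and the finiteness of the forgetful map to push the proper closed locus $\Sigma_d$ down to a proper closed subset of $\cM_g$ is precisely the step that makes this passage rigorous, and your remark distinguishing the fixed-$\ell$ case (dense open) from the uniform statement (very general) is a fair reading of the slight ambiguity in the quantifier placement.
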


To pass from Koszul vanishing statements to the birational structure of $\cR_{g, \ell}$, one calculates the cohomology classes of the universal failure loci of the conditions (\ref{prymgreen}), (\ref{conjectB}) and (\ref{mrc}) respectively on a suitable compact moduli of level $\ell$ curves.
The space $\cR_{g, \ell}$ admits a compactification $\rr_{g, \ell}$, which is the coarse moduli space associated to the smooth proper Deligne-Mumford stack $\ssrr_{g, \ell}$ of \emph{level twisted curves}, that is, triples $[\ssC, \eta, \phi]$, where $\ssC$ is a genus $g$ twisted curve, $\eta$ is a faithful line bundle on the stack $\ssC$ and $\phi:\eta^{\otimes \ell}\rightarrow \OO_{\ssC}$ is an isomorphism, see \cite{CF} and Section 1 of this paper. We denote by $f:\rr_{g, \ell}\rightarrow \mm_g$ the forgetful map.

An essential ingredient in the proof of Theorem \ref{koddim} is one of the main results of \cite{CF}. It is shown that the non log-canonical singularities of $\rr_{g, 3}$ do not impose adjunction conditions, that is, if $\epsilon: \widehat{\cR}_{g, 3}\rightarrow \rr_{g, 3}$ denotes a resolution of singularities, then for each $n\geq 1$, there exists an isomorphism at the level of spaces of global sections
$$\epsilon^*: H^0\Bigl(\rr_{g, 3}^{\mathrm{reg}}, K_{\rr_{g, 3}}^{\otimes n}\Bigr)\stackrel{\cong}\longrightarrow H^0\Bigl(\widehat{\cR}_{g, 3}, K_{\widehat{\cR}_{g, 3}}^{\otimes n}\Bigr).$$ A similar extension result has been proved in \cite{HM} for level one curves, that is, for the moduli space $\mm_g$ itself, and in \cite{FL} in the case of $\rr_{g, 2}$. We refer to \cite{CF} for partial generalizations of this extension result for higher levels. Using \cite{CF}, we thus conclude that the Kodaira dimension of $\rr_{g, 3}$ is equal to the Kodaira-Iitaka dimension of the canonical bundle $K_{\rr_{g, 3}}$. In order to prove that for a given $g$, the moduli space $\rr_{g, 3}$ is of general type, it suffices to express
$K_{\rr_{g, 3}}\in \Pic(\rr_{g, 3})$ as a positive combination of the Hodge class $\lambda\in \Pic(\rr_{g, 3})$, the class $[\ol{\mathfrak{D}}]$ of the closure of a certain effective divisor $\mathfrak{D}$ on $\cR_{g, 3}$ and boundary divisor classes corresponding to singular level curves. In our case, the divisor $\mathfrak{D}$ is a jumping locus for Koszul cohomology groups of paracanonically embedded curves, defines by one of the conditions (\ref{prymgreen}), (\ref{conjectB}) or (\ref{mrc}).

We denote by $\wt{\cM}_g$ the open
subvariety of $\mm_g$ classifying
irreducible stable curves of genus
$g$ and set $\wt{\cR}_{g, \ell}:=f^{-1}(\wt{\cM}_g)$.
The boundary $\wt{\cR}_{g, \ell}-f^{-1}(\cM_g)$ is the union of
three proper and closed subloci
$\Delta_0^{'}\cup \Delta_0^{''}\cup \Delta_{0}^{\mathrm{ram}}$,
where $\Delta_0^{''}$ and $\Delta_{0}^{\mathrm{ram}}$
can be characterized as follows (see Section 1, \S\ref{sect:boundary} for further details).
The locus $\Delta_{0}^{\mathrm{ram}}$ is
the ramification divisor of $f$.
The locus $\Delta_0^{''}$ is the locus of order-$\ell$ analogues of Wirtinger covers.
These boundary subloci are not irreducible in general and we refer to \cite[\S1.4.3-4]{CF} for a
decomposition into irreducible components.
When $\ell=2$ or $3$ however, the loci $\Delta_0^{'}$, $\Delta_0^{''}$ and
$\Delta_{0}^{\mathrm{ram}}$ are irreducible, whereas
for a prime level $\ell> 3$, both
$\Delta''_0$ and $\Delta_{0}^{\mathrm{ram}}$ are reducible and
decompose into $\lfloor \frac{\ell}{2} \rfloor$
irreducible components. For  $\Delta_{0}^{\mathrm{ram}}$, we have a decomposition
$\Delta_0^{\mathrm{ram}}=\bigcup_{a=1}^{\lfloor \ell/2\rfloor} \Delta_0^{(a)}$,
where the irreducible components are determined by a local index $a$ at the node of the generic level $\ell$ curve
(see Definitions \ref{defn:qslevel} and \ref{defn:twistlevel}) and yielding
$\QQ$-divisors $\delta_0^{(a)}=[\Delta_0^{(a)}]_{\QQ}$ at the level of the moduli stack,
fitting in the formula
$$f^*(\delta_0)=\delta_0'+\delta_0''+\ell \sum_{a=1}^{\lfloor \ell/2\rfloor} \delta_0^{(a)}.$$
Here $\delta_0$ is the $\QQ$-divisor attached to
$\Delta_0=\wt \cM_g- \cM_g$.
As shown in Section 1, \S\ref{sect:compositel} and \S\ref{sect:boundary},
the above identity extends word for word for possibly composite levels $\ell\ge 2$.

For any $\ell\ge 2$,
following the established practice of passing to
lower case symbols for the divisor classes on the moduli stack, we have the following results:

\begin{theorem}\label{mrc1}
Write $g=2i+1$ and $\ell\geq 2$. The class of the closure in $\wt{\cR}_{g, \ell}$ of the effective divisor
$\cU_{g, \ell}:=\bigl\{[C, \eta]\in \cR_{g, \ell}: K_{i, 1}(C; \eta, K_C)\neq 0\bigr\}$
is equal to
$$[\ol{\cU}_{g, \ell}]=\frac{1}{2i-1}{2i\choose i}\Bigl((3i+1)\lambda-\frac{i}{2}(\delta_0^{'}+\delta_0^{''})-\sum_{a=1}^{\lfloor \frac{\ell}{2}\rfloor}\frac{1}{2\ell} (i\ell^2+2a^2i-2a\ell i-a^2+a\ell)\ \delta_0^{(a)}\Bigr)\in \mathsf{Pic}(\wt{\mathsf{R}}_{g, \ell}).$$
\end{theorem}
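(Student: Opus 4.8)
The plan is to compute the class $[\ol{\cU}_{g,\ell}]$ via the standard technique of realizing the degeneracy locus as the first Chern class of a morphism of vector bundles and then pulling back and restricting. First I would set up, over (an open substack of) $\ssR_{g,\ell}$, the two natural syzygy bundles whose rank is the same precisely because $g=2i+1$ is odd: using the universal paracanonical setup one has a Koszul-type map $\phi: \mathcal{A} \to \mathcal{B}$ of vector bundles whose fibers compute the relevant pieces of $K_{i,1}(C;\eta,K_C)$ (cf.\ the formalism of Green and the description in the introduction). Over $\wt{\cM}_g$ the companion statement for $\xi$ general (Theorem~\ref{thmC}, or rather its proof via hyperelliptic specialization) guarantees that $\phi$ is generically an isomorphism, so $\cU_{g,\ell}$ is the vanishing of $\det\phi$ and $[\ol{\cU}_{g,\ell}] = c_1(\mathcal{B}) - c_1(\mathcal{A})$ on $\wt{\ssR}_{g,\ell}$.

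Next I would compute $c_1(\mathcal{A})$ and $c_1(\mathcal{B})$ by Grothendieck--Riemann--Roch applied to the universal level curve $\pi: \ssC \to \wt{\ssR}_{g,\ell}$, expressing everything in terms of $\lambda = c_1(\pi_* \omega_\pi)$, the tautological classes $\kappa$, $\psi$, the class of the universal torsion section, and the boundary divisors. The key point is that the answer over $\mm_g$ (i.e.\ the $\lambda$ and $\delta_0', \delta_0''$ coefficients) is forced by a direct transcription of Farkas--Mullane--Popa / Farkas-type computations for the divisor $C_{i}-C_{i}$ of curves whose $i$-th difference variety is special; the coefficient $\frac{1}{2i-1}\binom{2i}{i}(3i+1)$ of $\lambda$ and the coefficient $-\frac{i}{2}$ of $\delta_0'+\delta_0''$ should drop out of this calculation essentially verbatim, since $\eta$ being torsion of degree $0$ does not change the virtual Euler characteristics that enter GRR on the generic fiber. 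The genuinely new input is the contribution along the ramification boundary $\Delta_0^{(a)}$, indexed by the local index $a$ at the node.

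The main obstacle, and the step requiring real work, is determining the coefficient of $\delta_0^{(a)}$, namely $-\frac{1}{2\ell}(i\ell^2 + 2a^2 i - 2a\ell i - a^2 + a\ell)$. I would handle this by a local analysis near a general point of $\Delta_0^{(a)}$: such a point corresponds to an irreducible curve with one node whose normalization is a smooth curve $C'$ of genus $g-1=2i$ carrying a torsion bundle that, in the twisted-curve description (Definitions~\ref{defn:qslevel}, \ref{defn:twistlevel}), acquires the local index $a$ at the preimages of the node. The paracanonical bundle $L = K_C \otimes \eta$ restricts to $C'$ as $K_{C'}$ twisted by a divisor supported at the two branch points with multiplicities controlled by $a$ and $\ell$, so the limit of the syzygy bundles $\mathcal{A}, \mathcal{B}$ along a test curve transverse to $\Delta_0^{(a)}$ can be computed explicitly; the order of vanishing of $\det\phi$ along that test curve gives the multiplicity. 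This is where the quadratic-in-$a$ shape of the coefficient comes from — it is the ``defect'' in the naive Euler-characteristic count coming from how the paracanonical linear series degenerates at the node, and it must be matched against the known relation $f^*(\delta_0) = \delta_0' + \delta_0'' + \ell\sum_a \delta_0^{(a)}$ to confirm consistency. I would verify the final formula by two independent checks: (1) specializing $\ell=2$, where $a$ can only be $1$ and the coefficient must reduce to the Prym case in \cite{FL}; and (2) confirming that summing $f_*$ of the formula recovers the known class of $C_i - C_i$ on $\mm_g$, using the push-forward formula for $\delta_0^{(a)}$.
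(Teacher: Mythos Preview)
Your overall framework---realizing $\cU_{g,\ell}$ as the degeneracy locus of a morphism of equal-rank bundles and identifying $[\ol{\cU}_{g,\ell}]$ with a difference of first Chern classes---is exactly what the paper does. One correction: this is not the ``paracanonical setup.'' Since the group is $K_{i,1}(C;\eta,K_C)$, the relevant kernel bundle is $M_{K_C}$, and the map is (see (\ref{ugl}))
\[
\chi:\bigwedge^i\mathbb{E}\otimes\mathbb{E}_{0,1}\longrightarrow \mathbb{E}_{i-1,2},
\]
where $\mathbb{E}$ is the Hodge bundle and $\mathbb{E}_{j,b}=\mathsf{u}_*\bigl(\bigwedge^j\cM_{\mathsf u}\otimes\omega_{\mathsf u}^{\otimes b}\otimes\P\bigr)$, with $\cM_{\mathsf u}$ the \emph{canonical} global kernel bundle. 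The paracanonical bundle $K_C\otimes\eta$ plays no role here; your description of the degeneration at $\Delta_0^{(a)}$ in terms of how $L=K_C\otimes\eta$ restricts to the normalization is therefore off target.

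Where you genuinely diverge from the paper is in treating the $\delta_0^{(a)}$ coefficient as requiring a separate local/test-curve analysis. In the paper this coefficient falls out of the same GRR computation that gives $\lambda$ and $\delta_0',\delta_0''$. The point is that on the universal semi-stable level curve one has (Proposition~\ref{features}) $\mathsf{u}_*\bigl(c_1(\P)\cdot c_1(\omega_{\mathsf u})\bigr)=0$ and $\mathsf{u}_*\bigl(c_1^2(\P)\bigr)=-\sum_a\frac{a(\ell-a)}{\ell}\delta_0^{(a)}$; plugging these into GRR gives at once $c_1(\mathbb{E}_{0,b})=\lambda+\binom{b}{2}\kappa_1-\frac{1}{2\ell}\sum_a a(\ell-a)\,\delta_0^{(a)}$ (Proposition~\ref{grra}). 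The exact sequences $0\to\mathbb{E}_{j,b}\to\bigwedge^j\mathbb{E}\otimes\mathbb{E}_{0,b}\to\mathbb{E}_{j-1,b+1}\to 0$ then reduce $c_1(\mathbb{E}_{i-1,2})-c_1(\bigwedge^i\mathbb{E}\otimes\mathbb{E}_{0,1})$ to an alternating binomial sum in the $c_1(\mathbb{E}_{0,b})$, and the quadratic-in-$a$ shape you anticipated is precisely the $a(\ell-a)$ from the self-intersection of $\P$. Your test-curve plan is not wrong in principle, but it would force you to analyze the degeneration of the entire syzygy bundles $\mathbb{E}_{i-1,2}$ at twisted nodes, which is considerably harder than the single push-forward $\mathsf{u}_*(c_1^2(\P))$ the paper actually needs. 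Your proposed consistency checks ($\ell=2$, push-forward to $\mm_g$) are reasonable but unnecessary once the GRR input is in place.
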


Keeping $g=2i+1$ and setting $\ell=3$, in order to form an effective representative of $K_{\rr_{g, 3}}$, we use Theorem \ref{mrc1} together with the formula of the slope $s(\mm_{g, i+1}^1)=\frac{6(i+2)}{i+1}$ of the class of the closure in $\mm_g$ of the Hurwitz divisor \cite{HM}, \cite{EH}
$$\cM_{g, i+1}^1:=\{[C]\in \cM_g: W^1_{i+1}(C)\neq \emptyset\},$$ in order to obtain that, for suitable rational constants $\alpha, \beta >0$, the $\mathbb Q$-divisor class
\begin{equation}\label{combinatie}
\alpha\cdot [\ol{\cU}_{g, 3}]+\beta\cdot [f^*(\mm_{g, i+1}^1)]=\frac{6(2i+3)}{i+1}\lambda-2(\delta_0^{'}+\delta_0^{''})-4\delta_0^{(1)}\in \mathsf{Pic}(\wt{\mathsf{R}}_{g, 3})
\end{equation}
is effective. Comparing this formula against that of the canonical class $K_{\rr_{g, 3}}$ (see Section 1), we note that whenever the following inequality
$$\frac{6(2i+3)}{i+1}<13\Leftrightarrow i>5,$$
holds, the canonical class $K_{\rr_{g, 3}}$ is big. Using the extension result of \cite{CF}, we conclude that $\rr_{g, 3}$ is of general type for odd genus
$g\geq 13$. This argument also shows that when $g=11$ the class given in (\ref{combinatie}) is an effective representative for the canonical class; furthermore, one has the following inequalities
$$\kappa(\rr_{11, 3}, K_{\rr_{11,3}})\geq \kappa\bigl(\rr_{11,3}, f^*(\mm_{11,6}^1)\bigr)\geq \kappa(\mm_{11}, \mm_{11, 6}^1)=19,$$
where the last equality has been proved in \cite{FP}. It is an interesting open question whether the equality $\kappa(\rr_{11, 3})=19$ holds. It is known \cite{FV1} that
 the universal Picard variety over the moduli space $\mm_{11}$ has Kodaira dimension equal to $19$ as well.

\vskip 3pt
We compute the (virtual) class of the failure loci given by Conjectures A and B.
\begin{theorem}\label{prymgreenclass}
Set $g:=2i+6$ with $i\geq 0$ and $\ell\geq 2$. The virtual class of the closure in $\wt{\cR}_{g,\ell}$ of the locus $\cZ_{g, \ell}$ of level $\ell$ curves $[C, \eta]\in \cR_{g, \ell}$ with $K_{i+1, 1}(C, K_C\otimes \eta)\neq 0$ is equal to
$$[\ol{\cZ}_{g, \ell}]^{\mathrm{virt}}={2i+2\choose
i}\Bigl(\frac{3(2i+7)}{i+3}\lambda-(\delta_0'+\delta_0^{''})-\sum_{a=1}^{\lfloor \frac{\ell}{2}\rfloor} \frac{a^2-a\ell+\ell^2}{2}\delta_0^{(a)}\Bigr)\in \sPic(\wt{\ssR}_{g,\ell}).$$
\end{theorem}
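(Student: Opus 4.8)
The plan is to realize $\ol{\cZ}_{g,\ell}$ as the degeneracy locus of a morphism of vector bundles of the same rank over the stack $\wt{\ssR}_{g,\ell}$ and to compute its class by Porteous/Grothendieck-Riemann-Roch on the universal level curve. First I would set up the relevant Koszul-theoretic vector bundles. Over $\wt{\ssR}_{g,\ell}$ let $\pi : \cC \to \wt{\ssR}_{g,\ell}$ be the universal (twisted) curve, carrying the universal paracanonical bundle $\cL = \omega_\pi \otimes \eta$ and the Hodge bundle $\mbb E = \pi_* \omega_\pi$. Since $K_C \otimes \eta$ is non-special of degree $2g-2$ on a general $[C,\eta]$, the vanishing $K_{i+1,1}(C,K_C\otimes\eta)=0$ (for $g=2i+6$) is equivalent, via the standard Koszul-to-Lazarsfeld-bundle dictionary, to the surjectivity (equivalently, for rank reasons, the injectivity) of a map of sheaves
\begin{equation*}
\varphi : \bigwedge^{i+1} \mbb E \otimes H^0(\cL) \longrightarrow \bigwedge^{i} \mbb E \otimes H^0(\cL^{\otimes 2}),
\end{equation*}
or more precisely the associated map of the two relevant syzygy bundles obtained by pushing forward twists of the kernel bundle $M_{\cL}$ defined by $0 \to M_{\cL} \to H^0(\cL) \otimes \OO_\cC \to \cL \to 0$. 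One checks, using the numerology $g=2i+6$, that the two bundles in question have the same rank ${2i+6 \choose i+2}\cdot\text{(something)}$ reconciling with the binomial $\binom{2i+2}{i}$ in the final formula; this is exactly the "virtual divisor" structure alluded to in the statement of Conjecture A.

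The second and main step is the Chern-class computation. I would express $c_1$ of the two syzygy bundles in terms of $\lambda$, the boundary classes, and the tautological classes on $\wt{\ssR}_{g,\ell}$ by applying Grothendieck-Riemann-Roch to $\pi$, exactly as in the work of Farkas-Ludwig \cite{FL} on $\cR_{g,2}$ and Farkas-Popa \cite{FP} on the Hurwitz/Koszul divisors on $\mm_g$. The key inputs are: (a) the restriction of $\cL = \omega_\pi \otimes \eta$ differs from $\omega_\pi$ by the torsion class $\eta$, whose self-intersection contributes the boundary corrections — this is where the coefficients $\frac{a^2-a\ell+\ell^2}{2}$ in front of $\delta_0^{(a)}$ come from, via the local analysis at a node of a curve in $\Delta_0^{(a)}$ of the twist index $a$ (Definitions \ref{defn:qslevel}, \ref{defn:twistlevel}); (b) Mumford's formula $\pi_*(c_1(\omega_\pi)^2) = 12\lambda - \delta$; and (c) the Harris-Mumford-type relation $\kappa_1 = 12\lambda - \delta$ together with $\pi_*(\psi^2)$-type computations for $\cL$. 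Taking $c_1$ of $\sHom$ of the two bundles (i.e. the difference of the two $c_1$'s) and using that on the locus $\wt{\cR}_{g,\ell}$ the class $f^*\delta_0$ splits as $\delta_0'+\delta_0''+\ell\sum_a \delta_0^{(a)}$, one should land on precisely the claimed expression after collecting terms; the leading $\lambda$-coefficient $\frac{3(2i+7)}{i+3}$ is forced by the genus-$g$ Green/Prym-Green slope and matches the Koszul divisor slopes in the literature.

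The hard part, as usual in these arguments, will be (i) correctly identifying the two syzygy bundles as the push-forwards of the appropriate twists of $\wedge^{i+1} M_\cL \otimes \cL^{\otimes?}$ so that $\ol{\cZ}_{g,\ell}$ is genuinely scheme-theoretically their degeneracy locus, including the verification that the generic fiber has the expected (minimal) rank so that the locus is a proper degeneracy locus and Porteous applies; and (ii) the delicate local computation at the boundary divisors $\Delta_0^{(a)}$, $\Delta_0'$, $\Delta_0''$ — on twisted curves one must work on the stack $\wt{\ssR}_{g,\ell}$ and keep careful track of the $\ell$-th root structure of $\eta$ at the node, since the contribution of $\eta$ to the GRR integrand depends on the local index $a$ through a second-Bernoulli-type polynomial in $a/\ell$, and this is precisely what produces the asymmetry $a^2-a\ell+\ell^2$. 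Once these two points are settled, the remainder is the routine (if lengthy) bookkeeping of the GRR expansion, which I would not carry out here in detail but which parallels \cite{FL} and \cite{FP}.
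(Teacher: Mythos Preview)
Your overall strategy---realize $\ol{\cZ}_{g,\ell}$ as a degeneracy locus of equal-rank bundles, then compute $c_1$ via GRR and the recursive Koszul sequences, with boundary corrections coming from $\mathsf{u}_*(c_1^2(\P))$---is exactly what the paper does. Two points need correction, though.

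First, the bundle $\mathbb E=\pi_*\omega_\pi$ is the wrong one: the Koszul complex computing $K_{i+1,1}(C,K_C\otimes\eta)$ is built from exterior powers of $H^0(K_C\otimes\eta)$, not $H^0(K_C)$. You must use the Prym--Hodge bundle $\mathbb E':=\pi_*(\omega_\pi\otimes\P)$ of rank $g-1$ throughout.

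Second, even after this fix, the displayed Koszul differential
\[
\bigwedge^{i+1}\mathbb E'\otimes \pi_*\cL\;\longrightarrow\;\bigwedge^{i}\mathbb E'\otimes \pi_*\cL^{\otimes 2}
\]
is \emph{not} a map of equal-rank bundles (the ratio of ranks is $(i+5)/3(i+1)$), so ``surjectivity $\Leftrightarrow$ injectivity for rank reasons'' fails and Porteous does not directly yield a divisor. The paper resolves this by introducing two towers of bundles: $\mathbb H_{a,b}$ with fibre $H^0(\PP^{g-2},\bigwedge^a M_{\PP^{g-2}}(b))$, built recursively from $\mathbb H_{0,b}=\Sym^b\mathbb E'$, and $\mathbb G_{a,b}$ with fibre $H^0(X,\bigwedge^a M_L\otimes L^{\otimes b})$, built from $\mathbb G_{0,b}=\pi_*(\omega_\pi^{\otimes b}\otimes\P^{\otimes b})$. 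The restriction map $\varphi_{i,2}:\mathbb H_{i,2}\to\mathbb G_{i,2}$ is then a morphism of bundles of the \emph{same} rank whose degeneracy locus is precisely $\cZ_{g,\ell}$. The virtual class is $c_1(\mathbb G_{i,2}-\mathbb H_{i,2})$, and both are unwound via the recursions
\[
0\to\mathbb H_{a,b}\to\bigwedge^a\mathbb E'\otimes\mathbb H_{0,b}\to\mathbb H_{a-1,b+1}\to 0,\qquad
0\to\mathbb G_{a,b}\to\bigwedge^a\mathbb E'\otimes\mathbb G_{0,b}\to\mathbb G_{a-1,b+1}\to 0,
\]
reducing everything to $c_1(\mathbb E')$ and $c_1(\mathbb G_{0,b})$, the latter computed by GRR (Proposition~\ref{gj}). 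Your remark about the $\delta_0^{(a)}$ coefficient arising from a quadratic in $a/\ell$ is correct: it comes from combining $\mathsf u_*(c_1^2(\P))=-\sum_a\frac{a(\ell-a)}{\ell}\delta_0^{(a)}$ (Proposition~\ref{features}) with the splitting $\mathsf f^*\delta_0=\delta_0'+\delta_0''+\ell\sum_a\delta_0^{(a)}$.
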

We explain the meaning of this result. In Section 3 we construct tautological vector bundles $\cA$ and $\cB$ over the stack $\wt{\ssR}_{g, \ell}$ with $\mbox{rk}(\cA)=\mbox{rk}(\cB)$, as well as a vector bundle morphism $\varphi:\cA\rightarrow \cB$, such that over the open part $\cR_{g, \ell}\subset \rr_{g, \ell}$, the degeneracy locus of $\varphi$ equals the scheme $\cZ_{g, \ell}$. Accordingly, we define $[\zz_{g, \ell}]^{\mathrm{virt}}:=c_1(\cB-\cA)$. Whenever
the Prym-Green Conjecture holds, that is, $\varphi$ is generically non-degenerate and $\zz_{g, \ell}$ is a divisor, we have that $[\zz_{g, \ell}]^{\mathrm{virt}}-[\zz_{g, \ell}]$ is a (possibly empty) effective class entirely supported on the boundary of $\wt{\cR}_{g, \ell}$. In particular, the class computed in Theorem \ref{prymgreenclass} is effective. Next we describe the universal failure locus of Conjecture B.

\begin{theorem}\label{conjB}
Set $g:=2i+2\geq 4$ and $\ell\geq 3$ such that $i\equiv 1 \ \mathrm{mod}\ 2$ or ${2i-1\choose i}\equiv 0 \ \mathrm{mod}\ 2$.  The virtual class of the closure in $\wt{\cR}_{g, \ell}$ of the locus $\mathcal{D}_{g, \ell}$ of level $\ell$ curves $[C, \eta]\in \cR_{g, \ell}$ such that $K_{i, 1}(C; \eta^{\otimes ({\ell-2})}, K_C\otimes \eta)\neq 0$ is equal to
$$[\ol{\cD}_{g, \ell}]^{\mathrm{virt}}=\frac{1}{i-1}{2i-2\choose i}\Bigl((6i+1)\lambda-i(\delta_0^{'}+\delta_0^{''})-\frac{1}{\ell}\sum_{a=1}^{\lfloor \frac{\ell}{2}\rfloor} (i\ell^2+5a^2i-5ai\ell-2a^2+2a\ell)\delta_0^{(a)}\Bigr).$$
\end{theorem}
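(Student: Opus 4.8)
The plan is to compute the virtual class $[\ol{\cD}_{g,\ell}]^{\mathrm{virt}}$ exactly as one handles the analogous calculations for $\ol{\cU}_{g,\ell}$ and $\ol{\cZ}_{g,\ell}$. Over the stack $\wt{\ssR}_{g,\ell}$ with universal level curve $\pi:\ssC\to\wt{\ssR}_{g,\ell}$, universal paracanonical bundle $\cL=\omega_\pi\otimes\eta$ and the twisted bundle $\cF=\eta^{\otimes(\ell-2)}$, I would build two tautological vector bundles $\cA,\cB$ with $\varphi:\cA\to\cB$ a morphism whose degeneracy locus over $\cR_{g,\ell}$ is exactly $\mathcal D_{g,\ell}$. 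Concretely, $\varphi$ is the $i$-th Koszul-type map computing $K_{i,1}(C;\eta^{\otimes(\ell-2)},K_C\otimes\eta)$, so $\cA$ and $\cB$ are appropriate wedge powers of $\pi_*\cL$ tensored with the pushforwards $\pi_*(\cF\otimes\cL^{\otimes q})$ for $q=0,1$; the hypothesis on $i$ (namely $i$ odd or ${2i-1\choose i}\equiv 0 \bmod 2$) is precisely the condition under which Conjecture B predicts no extra syzygies, hence $\mathrm{rk}(\cA)=\mathrm{rk}(\cB)$ and $[\zz_{g,\ell}]^{\mathrm{virt}}:=c_1(\cB-\cA)$ is the class in question. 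Then one sets $[\ol{\cD}_{g,\ell}]^{\mathrm{virt}}=c_1(\cB)-c_1(\cA)$ and computes.

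The computation of $c_1(\cB)-c_1(\cA)$ then reduces to Grothendieck--Riemann--Roch applied to $\pi$, together with standard identities for Chern classes of wedge powers: if $E=\pi_*\cL$ has rank $g-1$ then $c_1(\wedge^i E)=\binom{g-2}{i-1}c_1(E)$, and $c_1(E)=\lambda$ plus boundary corrections coming from the twisted nature of $\cL$ on the stacky curve. The nontrivial inputs are: (i) the GRR computation of $\mathrm{ch}(\pi_*\cL)$ and $\mathrm{ch}(\pi_*(\cF\otimes\cL^{\otimes q}))$, which requires knowing $c_1(\eta)$ and $c_1(\omega_\pi)$ on $\ssC$ and in particular their self-intersections and intersections with the boundary of $\wt{\ssR}_{g,\ell}$; (ii) the precise local contributions of the three boundary divisors $\Delta_0',\Delta_0'',\Delta_0^{(a)}$. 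For the $\Delta_0^{(a)}$ contributions, the index $a$ enters through the local structure of $\eta$ near the node (it has order dividing $\ell$ with local monodromy $a$), which is exactly what makes the coefficient of $\delta_0^{(a)}$ a quadratic polynomial in $a$ and $\ell$; this part should mirror, up to the change $\cF=\OO_C\rightsquigarrow\eta^{\otimes(\ell-2)}$, the corresponding computation behind Theorem \ref{mrc1}, where the analogous coefficient is $\frac{1}{2\ell}(i\ell^2+2a^2i-2a\ell i-a^2+a\ell)$. One then matches the $\lambda$, $(\delta_0'+\delta_0'')$ and $\delta_0^{(a)}$ coefficients with the stated formula; the overall factor $\frac{1}{i-1}\binom{2i-2}{i}$ arises as the rank of the relevant Koszul bundle, exactly as $\frac{1}{2i-1}\binom{2i}{i}$ does in Theorem \ref{mrc1}.

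I would organize the proof in three steps: first, the construction of $\varphi:\cA\to\cB$ and verification that its degeneracy locus over $\cR_{g,\ell}$ is $\mathcal D_{g,\ell}$ (using the Koszul-theoretic description of $K_{i,1}$ and a Bott-type vanishing to identify the right pushforwards); second, a ``restriction to a test curve'' or direct GRR computation to pin down $c_1(\cA)$ and $c_1(\cB)$ in terms of $\lambda$ and the boundary classes $\delta_0',\delta_0'',\delta_0^{(a)}$; third, the bookkeeping to assemble $c_1(\cB)-c_1(\cA)$ into the claimed expression and to observe that, when Conjecture B holds (verified for $g\le 16$ and small $\ell$), the difference $[\ol{\cD}_{g,\ell}]^{\mathrm{virt}}-[\ol{\cD}_{g,\ell}]$ is effective and boundary-supported, so $[\ol{\cD}_{g,\ell}]^{\mathrm{virt}}$ is an honest effective class usable in the canonical-class computation.

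The main obstacle I expect is the precise determination of the boundary coefficients, in particular those of $\delta_0^{(a)}$. Near a point of $\Delta_0^{(a)}$ the level curve acquires a stacky node where $\eta$ has nontrivial local structure, and computing the pushforward sheaves $\pi_*\cL$, $\pi_*(\eta^{\otimes(\ell-2)})$, $\pi_*(\eta^{\otimes(\ell-2)}\otimes\cL)$ across this locus requires a careful normalization-sequence analysis on the twisted curve together with the $a$-dependent Euler characteristics; keeping track of the difference between stack and coarse-space computations (the factor $\ell$ appearing in $f^*(\delta_0)=\delta_0'+\delta_0''+\ell\sum_a\delta_0^{(a)}$) is where sign and combinatorial errors most easily creep in. The $\lambda$-coefficient, by contrast, should follow routinely from GRR on the smooth locus once $c_1(\eta)$ is understood, and I would cross-check the final formula against the $\ell=2$ and $\ell=3$ specializations and against the known Prym-Green and Koszul-class computations elsewhere in the paper.
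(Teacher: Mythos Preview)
Your proposal is essentially correct and follows the same strategy as the paper. The paper's proof is terse: it identifies $\cD_{g,\ell}$ as the degeneracy locus of the morphism $\bigwedge^i\mathbb E'\otimes\mathbb F_{0,1}\to\mathbb F_{i-1,2}$ (where $\mathbb F_{j,b}=\mathsf u_*(\bigwedge^j\cM'_{\mathsf u}\otimes\omega_{\mathsf u}^{\otimes b}\otimes\P^{\otimes(b-2)})$ involves the Prym kernel bundle), then telescopes via the exact sequences $0\to\mathbb F_{i-j,j+1}\to\bigwedge^{i-j}\mathbb E'\otimes\mathbb F_{0,j+1}\to\mathbb F_{i-j-1,j+2}\to 0$ to write $[\ol\cD_{g,\ell}]^{\mathrm{virt}}$ as an alternating sum $\sum_{j=0}^i(-1)^{j+1}c_1(\bigwedge^{i-j}\mathbb E'\otimes\mathbb F_{0,j+1})$, and finishes using the GRR formula $c_1(\mathbb F_{0,b})=\lambda+\binom{b}{2}\kappa_1-\frac{(b-2)^2}{2\ell}\sum_a a(\ell-a)\delta_0^{(a)}$ from Proposition~\ref{grra}.

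One point where your description is slightly off: you write that $\cA$ and $\cB$ are ``wedge powers of $\pi_*\cL$ tensored with the pushforwards $\pi_*(\cF\otimes\cL^{\otimes q})$ for $q=0,1$''. Taken literally this is the naive Koszul map $\bigwedge^i H^0(\cL)\otimes H^0(\cF\otimes\cL)\to\bigwedge^{i-1}H^0(\cL)\otimes H^0(\cF\otimes\cL^{\otimes 2})$, whose source and target do \emph{not} have the same rank. The equality of ranks, which is what makes $c_1(\cB)-c_1(\cA)$ a virtual divisor class, comes from replacing the target by $H^0(\bigwedge^{i-1}M_{\cL}\otimes\cF\otimes\cL^{\otimes 2})$, i.e.\ using the kernel bundle rather than the raw tensor product. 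This is the same maneuver as in the proof of Theorem~\ref{mrc1}, and the telescoping sequences above are precisely what allow you to compute $c_1$ of the resulting pushforward without ever having to GRR a wedge power of the kernel bundle directly. With that adjustment, your three-step plan matches the paper's argument exactly; no test curves are needed, and the $\delta_0^{(a)}$ coefficients fall out of Proposition~\ref{features}(3) feeding into Proposition~\ref{grra}.
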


To establish Theorem \ref{koddim} in even genus, when $g\geq 14$ but $g\neq 16$, we can use the class $[\ol{\cZ}_{g, 3}]$ to show that $K_{\rr_{g, 3}}$ is big. In genus $g=16$, when the Prym-Green Conjecture appears to fail, we use the class $[\ol{\cD}_{g, 3}]$ instead. Interestingly, for $g=12$, the Prym-Green divisor $\ol{\cZ}_{12, 3}\equiv 13\lambda-2(\delta_0^{'}+\delta_0^{''})-\frac{14}{3}\delta_0^{(1)}$ has slope equal to that of the canonical divisor $K_{\rr_{12, 3}}$. However, one can form the effective linear combination
$$\frac{31}{36}[\ol{\cZ}_{12, 3}]+\frac{1}{36\cdot 7}[\ol{\cD}_{12, 3}]= \bigl(13-\frac{1}{12}\bigr)\lambda-2(\delta_0^{'}+\delta_0^{''})-4\delta_{0}^{(1)}=K_{\wt{\ssR}_{12, 3}}-\frac{1}{12}\lambda\in \mathrm{Eff}(\wt{\ssR}_{12, 3}),$$
thus showing that $\rr_{12, 3}$ is of general type as well.

\vskip 3pt
Section 5 is devoted to the surprising failure of the Prym-Green Conjecture for $g=8$ and $\ell=2$. We give a "probabilistic proof" of the fact that
for a general genus $8$ Prym canonical curve  $\phi_{K_C\otimes \eta}:C\hookrightarrow \PP^6$, the multiplication map
$$I_2(C, K_C\otimes \eta)\otimes H^0(C, K_C\otimes \eta)\rightarrow I_3(C, K_C\otimes \eta),$$
has a non-trivial $1$-dimensional kernel, corresponding to a syzygy of rank $6$.

A curve $\phi_L:C\hookrightarrow \PP^6$ corresponding to a general element $[C, L]$ of the universal Jacobian variety $\mathfrak{Pic}^{14}_8\rightarrow \cM_8$ is linked via five quadrics to a genus $14$ curve with general moduli $C'\subset \PP^6$, embedded such that $K_{C'}(-1)\in W^1_8(C')$ is a pencil of minimal degree.

The universal Koszul locus $\mathfrak{Kosz}:=\bigl\{[C, L]\in \mathfrak{Pic}_8^{14}: K_{1, 2}(C, L)\neq 0\bigr\}$ is a divisor that has at least two components
$\mathfrak{Kosz}_{6}$ and $\mathfrak{Kosz}_{7}$, distinguished by whether the
 extra syzygy has rank $6$ or $7$. The generic  curve in $\mathfrak{Kosz}_{6}$
corresponds via linkage to a curve in the Petri divisor $\mathcal{GP}_{14, 8}^1$ on $\cM_{14}$, while
the generic curve in $\mathfrak{Kosz}_{7}$ links to a 7-gonal curve of genus
14 such that $K_{C'}(-1)$ has a base point. Using a structure theorem for rank 6 syzygies in $\PP^{6}$, our findings show that
$\cR_{8,2}$ lies in the component $\mathfrak{Kosz}_{6}$, that is, the extra syzygy of a Prym-canonical curve is never of maximal rank.

\vskip 3pt
\noindent {\bf{Structure of the paper.}} The first section is dedicated to the geometry of the stack of twisted level $\ell$ curves. In Section 2 we describe the syzygy formalism needed to formulate Conjectures A and B and then we prove Theorem \ref{thmC}. In Section 3 proofs of Theorems \ref{mrc1}, \ref{prymgreenclass} and \ref{conjB} are provided, whereas in Section 4 we explain how to verify with \emph{Macaulay} the syzygy conjectures formulated in the Introduction. The last section is devoted entirely to genus $8$ and we explain the unexpected failure of the Prym-Green Conjecture on $\cR_{8, 2}$.

\noindent {\bf{Disclaimer.}} This paper is a collaborative effort marrying techniques ranging from computer algebra to stacks. The work of the second author is mainly reflected in sections 4 and 5, while the work of the first author is mainly reflected in sections 1--3.
\

\noindent {\bf{Acknowledgments.}} We are grateful to Anand Patel for pointing out an error in an earlier version of this paper, to Burcin Er\"ocal  and Florian Geiss for help
with the computational aspects, and to
Hans-Christian Graf von Bothmer and Alessandro Verra, who made suggestions that allowed us to improve the results of Section 5.
%
%

\section{Level $\ell$ curves}
This section contains background material concerning the moduli space $\rr_{g, \ell}$ and complements the paper \cite{CF}.
We begin by setting terminology. For a Deligne-Mumford stack $\mathsf{X}$ be denote by $X$ its associated coarse moduli space. The morphism
$\pi_{\mathsf X}:\mathsf{X}\rightarrow X$ is universal with respect to morphisms from $\mathsf{X}$ to algebraic spaces. The \emph{coarsening}
of a morphism $\mathsf{f}:\mathsf{X}\rightarrow \mathsf{Y}$ between Deligne-Mumford stacks is the induced morphism $f:X\rightarrow Y$ between
coarse moduli spaces. For a Deligne-Mumford stack $\mathsf X$ we denote by
$\sPic(\mathsf X)$
the Picard group of the stack with rational coefficients.


We fix two integers $g$ and $\ell \ge 2$, the genus
and the level.
Throughout \S\ref{sect:moduli} we
assume that $\ell$  is \emph{prime},
but we generalize all statements to any, possibly composite, level
$\ell\ge 2$ in \S\ref{sect:compositel}.

\subsection{The geometric points of the moduli space of level $\ell$ curves}\label{sect:point}
The geometric points of the moduli space
of level $\ell$ curves can be interpreted
in two relatively simple ways, as
quasi-stable $\ell$th roots and
twisted $\ell$th roots respectively. We are recall their definitions.

 A \emph{quasi-stable} curve is a nodal curve $X$ such that (i) for each smooth rational component $E\subset X$ the inequality $k_E:=|E\cap \ol{X-E}|\geq 2$ holds, and (ii) if $E, E'$ are rational components with $k_E=k_{E'}=2$, then $E\cap E'=\emptyset$. Rational components of $X$ meeting the rest of the curve in two points are called \emph{exceptional}. If $X$ is a quasi-stable curve, there exists a \emph{stabilization} morphism $\mathrm{st}:X\rightarrow C$, obtained by collapsing all rational curves $E\subset X$ with $k_E=2$. Abusing terminology, we say that $X$ is a \emph{blow-up} of the curve $C$.
\begin{definition}\label{defn:qslevel}
A \emph{quasi-stable level $\ell$ curve} consists of a triple $(X, \eta, \phi)$, where $X$ is a quasi-stable curve, $\eta\in \mbox{Pic}^0(X)$ is a locally free sheaf of total degree $0$ and $\phi:\eta^{\otimes \ell}\rightarrow \OO_X$ is a sheaf homomorphism satisfying the following properties:
\begin{enumerate}
\item $\eta_E=\OO_E(1)$, for every \emph{exceptional} component $E\subset X$;
\item $\phi$ is an isomorphism along each non-exceptional component of $X$;
\item if $E$ is an exceptional component and $\{p, q\}:=E\cap \ol{X-E}$, then $$\mbox{ord}_p(\phi)+\mbox{ord}_q(\phi)=\ell.$$
\end{enumerate}
\end{definition}
\noindent
The last condition refers to the vanishing orders of the section $0\neq \phi\in H^0(X, \eta^{\otimes (-\ell)})$.

We recall that a \emph{balanced twisted curve} is a Deligne-Mumford stack $\mathsf{C}$ whose coarse moduli space $C$ is a stable curve, and such that locally at a node, the stack $\mathsf{C}$ comes endowed with an action  of $\mathbb Z_{\ell}$ of determinant $1$ (this is equivalent
to impose the condition that $\mathsf{C}$ be smoothable).
\begin{definition}\label{defn:twistlevel}
A \emph{twisted $\ell$th root} $(\mathsf{C}\rightarrow T, \eta, \phi)$,
is a balanced twisted curve $\mathsf{C}$
of genus $g$ over a base $T$, a faithful line bundle $\mathsf{\eta}$ on $\mathsf{C}$ (i.e. a representable morphism
$\eta:\mathsf{C}\to \mathsf{B}\ZZ_\ell$)
and an isomorphism
$\phi:\eta^{\otimes \ell}\rightarrow \OO_{\mathsf{C}}$. If $\eta$ has order $\ell$ in $\sPic(\mathsf{C})$,
then $(\mathsf{C}\rightarrow T, \eta, \phi)$ is a \emph{level $\ell$ curve}.
\end{definition}

There exist
two $(3g-3)$-dimensional
projective schemes $\mathrm{Root}_{g,\ell}$ (see \cite{J,CCC}) and $\ol{\mathcal M}_g(\mathsf{B}\ZZ_\ell)$ (see \cite{AV, ACV})
whose (geometric) points represent isomorphism classes of quasi-stable $\ell$th roots and twisted $\ell$th roots, respectively. The moduli space $\ol{\mathcal R}_{g,\ell}$ of level
$\ell$ curves arises as a connected component of $\ol{\mathcal M}_g(\mathsf{B}\ZZ_\ell)$.
As we illustrate in detail below,
$\mathrm{Root}_{g,\ell}$  and $\ol{\mathcal M}_g(\mathsf{B}\ZZ_\ell)$
are not isomorphic unless $\ell=2$ or $3$, but
there exists a natural
morphism $\mathrm{nor}: \ol{\mathcal M}_g(\mathsf{B}\ZZ_\ell)\to \mathrm{Root}_{g,\ell}$,
which sets a bijection on the sets
of geometric points, and may be regarded,
scheme-theoretically, as a normalization morphism (for $\ell>3$ the singularities of $\mathrm{Root}_{g,\ell}$ are not normal).
The aim of \S\ref{sect:moduli} is to describe
the twisted $\ell$th root
$(\mathsf{C}\rightarrow T, \eta, \phi)$
and  the quasi-stable $\ell$th root
$(X, \eta, \phi)$
that correspond to each other under the map $\mathrm{nor}$.
In fact, following \cite{CGRR}, we
lift this correspondence to a correspondence between a
universal twisted $\ell$th root and a universal quasi-stable $\ell$th
root both defined on the moduli stack
of level $\ell$ curves. This correspondence
allows us to study the enumerative geometry
of the moduli of level curves both in scheme-theoretic and
stack-theoretic terms (see Remark \ref{rem:forkernelbundles}).

\subsection{The moduli stack of level $\ell$ curves (when $\ell$ is prime)}\label{sect:moduli}
We consider the categories of quasi-stable $\ell$th roots and of twisted $\ell$ roots. For the sake of clarity, in this section, we assume that $\ell$ is prime.

A family of quasi-stable $\ell$th roots consists of a triple $(f, \eta, \phi)$, where $f:X\rightarrow T$ is a flat family of quasi-stable curves, $\eta$ is a line bundle on $X$  and
$\phi:\eta^{\otimes \ell}\rightarrow \OO_{X}$ is a morphism of sheaves such that for each geometric point $t\in T$, the restriction $$\bigl(X_t:=f^{-1}(t), \ \eta_t:=\eta_{| X_t},\ \phi_{|X_t}:\eta_t^{\otimes \ell}\rightarrow \OO_{X_t}\bigr)$$ is a quasi-stable $\ell$th root as defined above. The category of level $\ell$ curves gives rise to a proper Deligne-Mumford stack $\mathsf{Root}_{g, \ell}$ with associated coarse moduli space $\mathrm{Root}_{g, \ell}$, see \cite{CCC, J}. Unfortunately, as already mentioned, this stack is singular as soon as $\ell>3$ (see \eqref{eq:nonnormal}).
 To obtain a smooth stack whose coarse moduli space is the normalization of $\mathrm{Root}_{g, \ell}$ one can use
{twisted $\ell$ roots} \cite{AV, ACV}. 

Indeed, the category $\ol{\pem}_g(\mathsf{B}\mathbb Z_{\ell})$
of twisted $\ell$th roots forms a smooth and proper
Deligne-Mumford stack, whose coarse moduli space is the normalization of $\mathrm{Root}_{g, \ell}$.
The connected component
parametrizing order-$\ell$ line bundles is the Deligne-Mumford
stack $\ol {\mathsf R}_{g,\ell}$;
the coarse space $\ol {\mathcal R}_{g,\ell}$ is the $(3g-3)$-dimensional projective variety studied in
this paper.
The following diagram is commutative
\begin{equation}\label{eq:nor}\xymatrix@R=6pt{
  \ol{\pem}_g(\mathsf{B}\mathbb Z_{\ell})\ar[dr]_{\mathsf{f}}  \ar[rr]^{\mathsf{nor}  }  & & \mathsf{Root}_{g, \ell} \ar[dl]^{\mathsf{h}}   \\
  & \ol{\pem}_g &      \\
                 }\end{equation}
and endows $\ol{\pem}_g(\mathsf{B}\mathbb Z_{\ell})$
with a universal twisted $\ell$th root and a universal quasi-stable $\ell$th root.
The universal twisted $\ell$th root
$$\Bigl(\mathsf{u}_{g,\ell}^{\mathsf C} : \mathsf{C}_{g,\ell}\to \ol{\pem}_g(\mathsf{B}\mathbb Z_{\ell}),\
\eta^{\mathsf C}_{g,\ell}\in \sPic(\mathsf{C}_{g,\ell}),\
\phi_{g,\ell}^{\mathsf C}:(\eta^{\mathsf C}_{g,\ell})^{\otimes \ell} \xrightarrow{\sim}
\mathcal O\Bigr)$$
consists of a universal balanced twisted curve,
a line bundle and an isomorphism.
The universal quasi-stable level $\ell$ curve pulled back via $\mathsf{nor}^*$
$$\Bigl({\mathsf{u}}_{g,\ell}^{X} : X_{g,\ell}\to \ol{\pem}_g(\mathsf{B}\mathbb Z_{\ell}),\
\eta^X_{g,\ell}\in \sPic(X_{g,\ell}),\
\phi_{g,\ell}^{X}:(\eta^{X}_{g,\ell})^{\otimes \ell} \to
\mathcal O\Bigl)$$
consists of a family
of quasi-stable curves,
endowed with a line bundle
and a homomorphism of line bundles.
For any morphisms $T\to \ol{\pem}_g(\mathsf{B}\mathbb Z_{\ell})$
from a scheme $T$,
we consider the pullbacks
$(X_{g,\ell})_T$  and
$(\mathsf{C}_{g,\ell})_T$; the stabilization of the quasi-stable
curve $(X_{g,\ell})_T$  and
the coarsening of the balanced twisted curve
$(\mathsf{C}_{g,\ell})_T$ coincide and yield
a representable
morphism
$C_{g,\ell}\to \ol{\pem}_g(\mathsf{B}\mathbb Z_{\ell})$
which may be also regarded as the universal stable curve pulled back from
$\ol{\mathsf{M}}_g$ via $\mathsf{f}$. On $C_{g,\ell}$, we have the following
identity of sheaves (see \cite{JPic} and, in particular, (1-3)
after Lemma 3.3.8, the proof of Theorem 3.3.9, and Figure 1;
see also \cite[Lem.~2.2.5]{CGRR} for a more complete statement).

\begin{proposition}\label{pro:univfamilies}
Over $\ol{\pem}_g(\mathsf{B}\mathbb Z_{\ell})$, consider the stabilization morphism
$\mathrm{st}_{g,\ell}: X_{g,\ell}\to C_{g,\ell}$
and the coarsening morphism
$\pi_{g,\ell} : \mathsf{C}_{g,\ell}\to C_{g,\ell}$.
On the universal stable curve $C_{g,\ell}$, we have the following identity between  coherent sheaves
$(\pi_{g,\ell})_*(\eta^{\mathsf C}_{g,\ell})=(\mathrm{st}_{g,\ell})_*(\eta^X_{g,\ell}).$\qed
\end{proposition}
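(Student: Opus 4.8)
The plan is to prove the sheaf identity $(\pi_{g,\ell})_*(\eta^{\mathsf C}_{g,\ell})=(\mathrm{st}_{g,\ell})_*(\eta^X_{g,\ell})$ by a local-to-global argument: the statement is local on the universal stable curve $C_{g,\ell}$, so it suffices to check it formally (or étale-locally) near a node of a geometric fiber, where the structure of both constructions is completely explicit. First I would reduce to the case of a single geometric point $[C,\eta,\phi]$ and work over the complete local ring of $C$ at a node $p$, so that $C$ is analytically $\Spec k[[x,y]]/(xy)$. Away from the nodes of $C$ both $\mathrm{st}_{g,\ell}$ and $\pi_{g,\ell}$ are isomorphisms and the two pushforwards visibly agree (the line bundle $\eta$ on the twisted curve and on the quasi-stable curve restrict to the same line bundle on the smooth locus, pulled back from $C$), so the only content is at the nodes.

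Next I would recall the explicit local models. At a node of $C$ where the $\ZZ_\ell$-action has local index $a$ (with $0\le a<\ell$, see Definitions \ref{defn:qslevel} and \ref{defn:twistlevel}), the twisted curve $\mathsf C$ is locally $[\Spec k[[u,v]]/(uv)\,/\,\mu_\ell]$ with $\mu_\ell$ acting by $(u,v)\mapsto(\zeta u,\zeta^{-1}v)$, the coarsening map is $x=u^\ell$, $y=v^\ell$, and the faithful line bundle $\eta^{\mathsf C}$ is the eigensheaf on which $\mu_\ell$ acts with character of weight $a$. Its $\mu_\ell$-invariant pushforward along $\pi_{g,\ell}$ is then the fractional-power sheaf generated by the monomials $u^{a}$ and $v^{\ell-a}$ over $k[[x,y]]/(xy)$; concretely it is $x^{\lceil a/\ell\rceil}$-type data encoded as a rank-one torsion-free module, or rather (since $0\le a\le \ell$) exactly the module $(x^{0},\dots)$ I would write out explicitly. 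On the quasi-stable side, the exceptional component $E$ over the node carries $\eta^X_E=\OO_E(1)$, the orders of $\phi$ at the two points of $E\cap\ol{X-E}$ are $a$ and $\ell-a$ by Definition \ref{defn:qslevel}(iii), and pushing $\eta^X$ down along the contraction $\mathrm{st}_{g,\ell}$ of $E$ produces precisely the same rank-one torsion-free module on $k[[x,y]]/(xy)$. Matching these two local descriptions — identifying the invariant-pushforward computation on the stack with the $H^0$-along-fibers computation on the blow-up — is the heart of the argument and follows from the construction of $\mathsf{nor}$ and the results of \cite{JPic} (the references (1-3) after Lemma 3.3.8, the proof of Theorem 3.3.9, and Figure 1 cited just before the proposition), and \cite[Lem.~2.2.5]{CGRR}.

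Having checked the identity fiberwise, I would promote it to an identity of sheaves on the total space $C_{g,\ell}$ over $\ol{\pem}_g(\mathsf B\ZZ_\ell)$ by noting that both $(\pi_{g,\ell})_*\eta^{\mathsf C}_{g,\ell}$ and $(\mathrm{st}_{g,\ell})_*\eta^X_{g,\ell}$ are coherent, their formation commutes with base change (the higher direct images $R^1$ vanish in these local models since each fiber of $\pi$ and of $\mathrm{st}$ is either a point or a chain where $\eta$ has the right degrees), and they agree on the dense open locus where the maps are isomorphisms; a sheaf map between torsion-free-on-fibers coherent sheaves that is an isomorphism fiberwise is an isomorphism, so the natural comparison map (induced by the normalization morphism $\mathsf{nor}$ of \eqref{eq:nor}) is an isomorphism. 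The main obstacle I anticipate is bookkeeping the local index $a$ and the exceptional-component degrees correctly so that the two explicit rank-one modules literally coincide rather than merely being abstractly isomorphic — i.e. pinning down the canonical comparison isomorphism coming from $\mathsf{nor}$ — but this is exactly what the cited lemmas in \cite{JPic} and \cite{CGRR} supply, so the proof is essentially an unwinding of those local computations.
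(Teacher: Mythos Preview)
Your proposal is correct and follows essentially the same approach as the paper: the paper does not give its own proof but instead refers to the local computations in \cite{JPic} (specifically (1--3) after Lemma~3.3.8 and the proof of Theorem~3.3.9) and \cite[Lem.~2.2.5]{CGRR}, and your sketch is precisely an unwinding of those local node-by-node comparisons between the $\mu_\ell$-invariant pushforward on the twisted curve and the pushforward along contraction of the exceptional component on the quasi-stable curve.
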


Let us fix a closed
point $\ol \tau:=[\mathsf{C},\eta,\phi]$  representing a twisted
$\ell$th root and its image  $\ol t:=[X, \eta, \phi]$ representing a
quasi-stable $\ell$th root.
By the above proposition, the coarsening $C$ of $\mathsf{C}$ equals
the stabilization of $X$. Consider the exceptional components $E^1, \ldots, E^k\subset X$ and write $\{p_i, q_i\}:=E^i\cap \ol{X-E^i}$ and $a_i:=\mbox{ord}_{p_i}(\phi)$ and $b_i:=\mbox{ord}_{q_i}(\phi)$, for $i=1, \ldots, k$. Then $a_i+b_i=\ell$ and we have $\mbox{gcd}(a_i, b_i)=1$ (because $\ell$ is prime).
The only nontrivial stabilizers of $\mathsf{C}$ occur at
the nodes $\mathsf{n}_1$,\dots ,$\mathsf{n}_k$
of $\mathsf{C}$ mapping to the nodes
$\mathrm{st}(E_1), \ldots, \mathrm{st}(E_1)$ of $C$. The local picture
at $\mathsf{n}_i$ is the spectrum of
$\mathbb{C}[\wt x_i,\wt y_i]/(\wt x_i\wt y_i=0)$ and
the local coordinates $x_i$ and $y_i$
of $\ol{ X-E^i}$ at $p_i$ and at $q_i$
are related to $\wt x_i$ and to $\wt y_i$ by
$\wt x_i^\ell=x_i$ and $\wt y_i^{\ell}=y_i$.
The action of $\ZZ_\ell$ on
the local picture $\Spec R[t]\to \Spec R$
of $\eta\to \mathsf{C}$ at $\mathsf{n_i}$ is given by
$(\wt x_i,\wt y_i)\mapsto (\xi_\ell \wt x_i, \xi_\ell^{-1}\wt y_i)$ and $t_i\mapsto \xi_\ell^{a_i}t_i$.
Following \cite{CF}, we refer to the pair $(a_i, b_i)$ as the \emph{multiplicity} of the quasi-stable $\ell$-root $[X, \eta, \phi]$ along the exceptional component $E^i$ and, equivalently,
of the twisted $\ell$th root
at $\mathsf{n}_i$.

We further describe the local picture of the universal quasi-stable
$\ell$th root over $\mathsf{Root}_{g,\ell}$ along one exceptional component $E=E^i$ meeting the rest of $X$ at $p$ and $q$ and having multiplicity $(a, b):=(a_i, b_i)$. It is proved in \cite[\S3.1]{CCC},  that if one denotes by $C_{a, b}\subset \mathbb A^2_{w, z}$ the affine plane curve given by the equation $w^a=z^b$ and the following surface by
\begin{equation}\label{eq:nonnormal}
S_{a, b}:=\Bigl\{\bigl((x, y, z, w), [s_0:s_1]\bigr)\in \mathbb A_{x, y, z, w}^4\times \PP^1: xs_0=ws_1, \ ys_1=zs_0,\  w^a=z^b\Bigr\},  \end{equation}
then the covering $f_{a, b}:S_{a, b}\rightarrow C_{a, b}$ given
by $f_{a, b}:((x, y, w, z), [s_0:s_1])\mapsto (w, z)$ is a local model for the simultaneous smoothing of the nodes $p$ and $q$ within the
universal quasi-stable $\ell$th root over $\mathsf{Root}_{g, \ell}$. In particular, as soon as $a, b>1$, the space $C_{a, b}$ and hence the moduli space $\mathrm{Root}_{g, \ell}$ are not normal. The normalization $\wt{f}_{a, b}:\wt{S}_{a, b}\rightarrow \mathbb A_{\tau}^1$ of $f_{a, b}$ is constructed by setting
\begin{equation}\label{eq:norm}
\wt{S}_{a, b}:=\Bigl\{\bigl((x, y, \tau), [s_0:s_1]\bigr)\in \mathbb A^3_{x, y, \tau}\times \PP^1:xs_0=\tau^bs_1, ys_1=\tau^a s_0\Bigr\},                                                                                                                                           \end{equation}  and mapping
$\mathbb A_{\tau}^1\rightarrow C_{a, b}$ via $\tau\mapsto (\tau^b, \tau^a)$. At the level of the normalization, the point $q\in X$ corresponds to the $A_{a-1}$-singularity $\bigl((0, 0, 0), [1: 0]\bigr)\in \wt{S}_{a, b}$, whereas
the $A_{b-1}$-singularity $\bigl((0, 0, 0),[0:1]\bigr)\in \wt{S}_{a, b}$ corresponds to the point $p\in X$.
Globalizing this description,
one obtains a local picture of the diagram \eqref{eq:nor} at the points
$\ol \tau$ and $\ol t$.
There exist local coordinates such that
\begin{align}\label{eq:localrings}
\hat{\OO}_{\mathsf{Root}_{g, \ell}, \ \ol t}&=\mathbb C[[w_1, z_1, \ldots, w_k, z_k, t_{k+1}, \ldots, t_{3g-3}]]/(w_i^{a_i}=z_i^{b_i}), \mbox{ for } i=1, \ldots, k,\\
\hat{\OO}_{\ol{\pem}_g, \mathrm{st}(X)}&=\mathbb C[[t_1, \ldots, t_{3g-3}]],\\
\hat{\OO}_{\ol{\pem}_g(\mathsf{B}\mathbb Z_{\ell}), \ \ol \tau}&=\mathbb C[[\tau_1, \ldots, \tau_k, t_{k+1}, \ldots, t_{3g-3}]].
\end{align}
Locally, the morphisms $\mathsf{f}$, $\mathsf{nor}$ and $\mathsf{h}$ are given by
\begin{align}\label{eq:loccoordf}\mathsf{f}:t_i\mapsto \begin{cases}\tau_i^\ell &\text{$i\le k$}\\
                                     t_i &\text{$i>k$}
                                    \end{cases} \qquad \mathsf{nor}:\begin{cases}w_i\mapsto \tau_i^{b_i} &\text{$i\le k$}\\
z_i\mapsto \tau_i^{a_i}&\text{$i\le k$}\\
                                     t_i\mapsto t_i &\text{$i>k$}
                                    \end{cases}\qquad
\mathsf{h}:t_i\mapsto \begin{cases}w_iz_i &\text{$i\le k$}\\
                                     t_i &\text{$i>k$.}
                                    \end{cases}
\end{align}
We regard $X$ as a fiber of
a universal quasi-stable curve on
$\ol{\pem}_g(\mathsf{B}\mathbb Z_{\ell})$
over $\ol \tau$. Set $\tau:=\tau_i$, $(a,b):=(a_i,b_i)$.
Then, at $p$, we obtain an $A_{b-1}$-singularity
of equation $x( {s_0}/{s_1})=\tau^{b}$
along the locus where the node $p$ persists.
At
$q$, we obtain an $A_{a-1}$-singularity
 of equation
$y({s_1}/{s_0})=\tau^{a}$ along the locus where
the node $q$ persists. In view of
intersection theory computations, following \cite{CGRR}, we
desingularize such singularities. For
any \'etale morphism  $T\to
\ol{\pem}_g(\mathsf{B}\mathbb Z_{\ell})$ the desingularization of
$(X_{g,\ell})_T$
yields a semi-stable curve, globally on $\ol{\pem}_g(\mathsf{B}\mathbb Z_{\ell})$,
$$X'_{g,\ell}\to X_{g,\ell} \to \ol{\pem}_g(\mathsf{B}\mathbb Z_{\ell})$$
equipped with a line bundle $\P:=\eta^{X'}_{g,\ell}$ and a homomorphism
$\Phi=\phi^{X'}_{g,\ell}$ from the $\ell$th tensor power of $\P$ to $\OO$:
the pull-backs of $\eta^X_{g,\ell}$ and $\phi^{X}_{g,\ell}$.
The curve
$X'$ over $\ol \tau\in \ol{\pem}_g(\mathsf{B}\mathbb Z_{\ell})$ is
obtained by iterated blow-ups; \emph{i.e.} for a fibre of $X_{g,\ell}$ of the form
$C'\cup E$ with $E$ exceptional and $E\cup C'=\{p,q\}$.
One inserts a
chain of $\ell-a-1$ rational curves $E_1, \ldots, E_{\ell-a-1}$ at $p$
and a  chain of $a-1$ rational curves $E_{\ell-a+1}, \ldots, E_{\ell-1}$ at $q$. Setting $E_{\ell-a}:=E$,
we obtain the nodal semi-stable curve
$C'\cup E_1\cup \ldots \cup E_{\ell-1}$ of Figure \ref{fig: X'}, where $C'=\ol{(X-E)}$.

By iterating this procedure at all exceptional curves we get $X'$, whose
stable model
is $C=\mathrm{st}(X)$, obtained by contracting the chains
of the form $E_1\cup\ldots\cup E_{\ell-1}$: this explains that
the singular locus  $C_{g,\ell}\to \ol{\pem}_g(\mathsf{B}\mathbb Z_{\ell})$
is formed by $A_{\ell-1}$-singularities (since
$C_{g,\ell}$ is the pullback via $\mathsf{f}$
of the universal stable curve
of $\ol {\mathsf{M}}_g$, this may be regarded as
a consequence of the local description of $\mathsf{f}$ of \eqref{eq:loccoordf}).
The restriction of
$\P=\eta^{X'}_{g,\ell}$ to $X'$ is a line bundle $\eta'$ satisfying
$\eta'_{E_i}=\OO_{E_i}$ for $i\neq \ell-a$ and $\eta'_{E_{\ell-a}}=\OO_{E_{\ell-a}}(1)$.

\begin{figure}[htb!]
\centering%
\includegraphics[width=5cm, height=3cm]{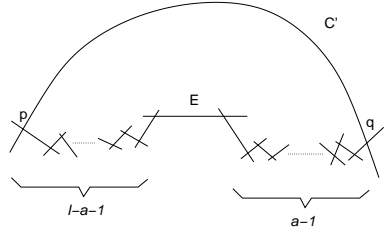}
\caption{The curve $X'$.}
\label{fig: X'}
\end{figure}


\begin{remark}\label{rem:forkernelbundles}
There is an isomorphism $H^0(X', \omega_{X'}\otimes \eta')\cong H^0(X, \omega_X\otimes \eta)$ and we often identify the two spaces. This may be regarded as a consequence of the
 fact that $X_{g,\ell}'\to X_{g,\ell}$  is crepant (the relative canonical bundles
match under pullback) and of Proposition \ref{pro:univfamilies}.
In practice this means that any cohomological question involving kernel bundles on a twisted $\ell$th root can be settled by working, equivalently,
over
$\ol{\pem}_g(\mathsf{B}\mathbb Z_{\ell})$, with the
twisted $\ell$th root $(\mathsf{C}_{g,\ell},\eta^{\mathsf{C}}_{g,\ell},\phi^{\mathsf{C}}_{g,\ell})$, with
the quasi-stable level
$\ell$ curve $(X_{g,\ell},\eta^X_{g,\ell},\phi^X_{g,\ell})$,
or with the pullback $\P:=\eta^{X'}_{g,\ell}$ of $\eta^X_{g,\ell}$ on the
semi-stable curve $X'_{g,\ell}$.
\end{remark}

\subsection{The moduli stack of level $\ell$ curves (for any level $\ell$)}\label{sect:compositel}
Let us assume only $\ell\in \ZZ_{\ge 2}$, without any
condition on $\ell$ being prime.
The local picture of $\mathsf{Root}_{g,\ell}$
\eqref{eq:localrings} is still valid; in particular the
local model for the curve smoothing the nodes $p$ and $q$
of an exceptional component $E$ is still $S_{a,b}$ where $(a,b)$ are
the multiplicites of the quasistable
$\ell$th root at the nodes. However, since $a$ and $b$ are no longer
necessarily coprime,
$S_{a,b}$ has $d:= \mbox{gcd}(a, b)$
irreducible components, each one isomorphic
to $S_{\frac{a}{d}, \frac{b}{d}}$. Then the local picture of $\mathsf{f}$,
$\mathsf{nor}$ and $\mathsf{h}$ of \eqref{eq:nor} is
\begin{align}\label{eq:loccoordf2}\mathsf{f}:t_i\mapsto \begin{cases}\tau_i^{\ell/d_i} &\text{$i\le m$}\\
                                     t_i &\text{$i>m$}
                                    \end{cases}
\qquad \mathsf{\wt{nor}}:\begin{cases}w_i\mapsto \tau_i^{\ell-a_i/d_i}
&\text{$i\le k$}\\
z_i\mapsto \tau_i^{a_i/d_i}&\text{$i\le k$}\\
                                     t_i\mapsto t_i &\text{$i>k$}
                                    \end{cases}\qquad
\mathsf{h}:t_i\mapsto \begin{cases}w_iz_i &\text{$i\le k$}\\
                                     t_i &\text{$i>k$,}
                                    \end{cases}
\end{align}
for $d_i=\mbox{gcd}(a_i,b_i)$.
This happens because the stabilizers of a node of $\mathsf{C}$
attached to an exceptional component whose multiplicities
satisfy $d_i=\mbox{gcd}(a_i,b_i)$ have order $\ell/d_i$.
Notice that the normalization
morphism $\mathsf {nor}$ induces a surjection at the level
of closed points, but does not induce an injection
as soon as $\ell$ is composite. Furthermore the desingularization
of the pullback of the
universal quasi-stable curve on $\ol{\mathsf{M}}_{g}(\mathsf B\ZZ_\ell)$
is a semistable curve with chain of rational curves of length
$\ell/d_i-1$ over the node corresponding to the the
exceptional component $E^i$. Fortunately, there is a simple way to avoid these exceptions.

In \cite{C} the first author introduced a smooth modification $\wt{\pem}_g(\mathsf{B}\mathbb Z_{\ell})$ of $\ol{\pem}_g(\mathsf{B}\mathbb Z_{\ell})$, which allows to reproduce,
even when $\ell$ is composite, the same configuration
with chains of $\ell-1$ rational curves illustrated in Figure \ref{fig: X'}.
The stack $\wt{\pem}_g(\mathsf{B}\mathbb Z_{\ell})$
parametrizes
triples $(\mathsf{C}, \eta, \phi)$ where $\mathsf{C}$ is
a twisted balanced curve with stabilizer of order $\ell$ at \emph{every} node, $\eta$ is a line bundle on $\mathsf{C}$, and
$\phi$ is an isomorphism identifying $\eta^{\otimes \ell}$ with $\OO$.
We do not impose any condition on faithfulness of the line bundle;
i.e. $(\mathsf{C}, \eta, \phi)$ may be
a non-representable map $\mathsf{C}\to \mathsf B\ZZ_\ell$.
In this way, by relaxing the condition on faithfulness and by imposing stabilizers of order $\ell$, the local picture
of $\mathsf{C}$ at each node of the singular locus
$\mathrm{Sing}({\mathsf{C}})=
\{\mathsf{n}_1, \ldots, \mathsf{n}_m\}$ is given by the spectrum of
$R=\mathbb C [\wt x_i,\wt y_i]/(\wt x_i\wt y_i=0)$ with
$\ZZ_\ell$ operating as
$(\wt x_i,\wt y_i)\mapsto (\xi_\ell \wt x_i, \xi_\ell^{-1}\wt y_i)$.
The line bundle $\eta\to \mathsf{C}$ has
local picture
$\Spec R[t_i]\to \Spec R$
with $\ZZ_\ell$ operating as
$t_i\mapsto \xi_\ell^{a_i}t_i$ with $a_i\in \{0, \ldots, \ell-1\}$,
not necessarily prime to $\ell$, for $i=1,\dots, m$.
As a consequence, there exist local coordinates at the closed
point $\wt \tau\in \wt{\pem}_g(\mathsf{B}\mathbb Z_{\ell})$ representing $(\mathsf{C},\eta,\phi)$
\begin{align*}
\hat{\OO}_{\wt{\pem}_g(\mathsf{B}\mathbb Z_{\ell}), \ \wt \tau}&=\mathbb C[[\wt\tau_1, \ldots, \wt\tau_m, t_{m+1}, \ldots, t_{3g-3}]].
\end{align*}
and a
morphism $\mathsf{e}:\wt{\pem}_g(\mathsf{B}\mathbb Z_{\ell})\to \ol{\pem}_g(\mathsf{B}\mathbb Z_{\ell})$ fitting in
\begin{equation}\label{eq:nor2}\xymatrix{
  \wt{\pem}_g(\mathsf{B}\mathbb Z_{\ell})\ar[r]^{\mathsf{e}}\ar[rrd]_{\wt{\mathsf f}}&
\ol{\pem}_g(\mathsf{B}\mathbb Z_{\ell})\ar[dr]^{\mathsf{f}}  \ar[rr]^{{\mathsf{nor}}  }  & & \mathsf{Root}_{g, \ell} \ar[dl]^{\mathsf{h}}   \\
&  & \ol{\pem}_g &      \\
                 }\end{equation}
given (at the level of
local rings) by $\mathsf e: \tau_i\mapsto \wt \tau_i^{d_i}$ for $i\le m$  and by
$\wt{\mathsf f}: t_i\mapsto \wt {\tau}_i^\ell$ for $i\le m$.

The morphism $\mathsf e$ yields an isomorphism at the level of coarse spaces. Therefore, as in \cite{CGRR}, when $\ell$ is not prime,
we can work throughout the rest of the paper with the substack
$\ol{\mathsf R}_{g,\ell}$ arising as the connected component
of $\wt{\pem}_g(\mathsf{B}\mathbb Z_{\ell})$ of
triples $(\mathsf C,\eta,\phi)$ where $\eta$ has order $\ell$.
This is a smooth Deligne-Mumford stack whose coarse space
is a projective $(3g-3)$-dimensional variety $\ol{\mathcal  R}_{g,\ell}$.
With a slight abuse of notation, let us denote by $X_{g,\ell}$ the
pullback of the universal quasi-stable curve on
$\ol{\mathsf R}_{g,\ell}$ via $\mathsf {nor}\circ \mathsf e$.
By construction, the singularities arising at the nodes $p_i$ and $q_i$
of an exceptional curve $E_i$
are of type $A_{\ell-a_i-1}$ and $A_{a_i-1}$ and the
desingularization $X'_{g,\ell}$ is fibred in semi-stable curve
obtained by inserting chains of $\ell-a_i-1$ rational curves at $p_i$
and chains of $a_i-1$ rational curves at $q_i$ as in Figure \ref{fig: X'}.
Remark \ref{rem:forkernelbundles} generalizes word for word. From
now on we shall work with
$\ol{\mathsf R}_{g,\ell}\subset \wt{\pem}_g(\mathsf{B}\mathbb Z_{\ell})$ equipped with the universal
semistable curve
$X'_{g,\ell}$ and with the universal line bundle $\P:=\eta^{X'}_{g,\ell}$
pulled back from $\mathsf{Root}_{g,\ell}$.

\subsection{The boundary divisors of $\rr_{g, \ell}$.}\label{sect:boundary}
We briefly discuss the geometry of the boundary divisors of $\rr_{g, \ell}$ by describing the structure of the fibre $f^{-1}([C])$ corresponding to a general point of each boundary of the boundary divisors $\Delta_0, \ldots, \Delta_{\lfloor \frac{g}{2}\rfloor}$ of $\mm_g$. Let us assume first that $C:=C_1\cup_p C_2$ is a transverse union of two smooth curves $C_1$ and $C_2$ of genus $i$ and $g-i$ respectively. If $[X, \eta, \phi]\in f^{-1}([C])$, then necessarily $X=C$ and $\eta$ is uniquely determined by the data of two line bundles $\eta_{C_1}\in \mbox{Pic}^0(C_1)[\ell]$ and $\eta_{C_2}\in \mbox{Pic}^0(C_2)[\ell]$. Depending on which of these line bundles is trivial, we define the boundary divisors $\Delta_i, \Delta_{g-i}$ and $\Delta_{i:g-i}$ respectively. For $1\leq i\leq g-1$, the general point of $\Delta_i$ corresponds to a level curve of compact type
$$\bigl[C_1\cup_p C_2, \eta_{C_1} \text{ of order $\ell$},\ \eta_{C_2}\cong\OO_{C_2}\bigr]\in \rr_{g, \ell}.$$
Finally, we denote by $\Delta_{i:g-i}$ the closure in $\rr_{g, \ell}$ of the locus of twisted level curves on $C_1\cup C_2$ such that $\eta_{C_1}\not \cong \OO_{C_1}$ and $\eta_{C_2}\not\cong \OO_{C_2}$. Denoting by $\delta_i:=[\Delta_i]_{\mathbb Q}, \delta_{g-i}:=[\Delta_{g-i}]_{\mathbb Q}$, $\delta_{i:g-i}:=[\Delta_{i:g-i}]_{\mathbb Q}$ the corresponding classes in $\Pic(\ssrr_{g, \ell})$, we have the following relation, showing that the morphism of stacks $\mathsf{f}:\ol{\mathsf{R}}_{g, \ell}\rightarrow \rem_g$ is \'etale over $\Delta_i$ where $i\geq 1$.
\begin{equation}\label{di}
\mathsf{f}^*(\delta_i)=\delta_i+\delta_{g-i}+\delta_{i:g-i}.
\end{equation}
For $i=1$ and $\ell\geq 3$, observe that at the level of coarse moduli spaces the formula $f^*(\Delta_1)=2(\Delta_1+\Delta_{1:g-1})+\Delta_{g-1}$ holds, in particular $f$ is ramified along $\Delta_1$ and $\Delta_{1:g-1}$.

Suppose now that $[C]\in \Delta_0$ is a general irreducible $1$-nodal curve of genus $g$ with normalization $\mbox{nor}:C'\rightarrow C$, and let $p, q\in C'$ be such that $\mbox{nor}(p)=\mbox{nor}(q)\in \mbox{Sing}(C)$. Assume that $[X, \eta, \phi]\in \pi^{-1}([C])$. We write down the exact sequence
\begin{equation}\label{nor}
1\longrightarrow \mathbb Z_{\ell}\longrightarrow \mbox{Pic}^0(C)[\ell]\stackrel{\mathrm{nor}^*}\longrightarrow \mbox{Pic}^0(C')[\ell]\longrightarrow 0.
\end{equation}
If $X=C$, we set $\eta_{C'}:=\mathrm{nor}^*(\eta)\in \mbox{Pic}^0(C')[\ell]$. We denote by $\Delta_0^{'}$ the closure in $\rr_{g, \ell}$ of the locus of level curves $[C, \eta, \phi]$ as above, where $\eta_{C'}\not \cong \OO_{C'}$.
An $\ell$-torsion line bundle on $C$ is determined by the choice of $\eta_{C'}$ and the choice of a $\mathbb Z_{\ell}$-gluing of the fibres $\eta_{C'}(p)$ and $\eta_{C'}(q)$. We observe that, when $\ell$ is prime,
$\mbox{deg}(\Delta_0^{'}/\Delta_0)=\ell(\ell^{2g-2}-1)$ (see \cite{CF} for the general statement).

We denote by $\Delta_0^{''}$ the closure in $\rr_{g, \ell}$ of the locus of level curves $[C, \eta, \phi]$
such that $\eta_{C'}\cong\OO_{C'}$ (we referred to these curves as Wirtinger covers in the introduction; these arise
as the preimage of a constant section via the morphism $\eta \to \OO$ between total spaces).
Using (\ref{nor}), an order-$\ell$ line bundle $\eta\in \mbox{Pic}^0(C)[\ell]$ with $\mathrm{nor}^*(\eta)=\OO_{C'}$ is determined by a
$\mathbb Z_{\ell}^*$-gluing of the fibres $\eta_{C'}(p)$ and $\eta_{C'}(q)$, that is, a root of unity $\xi_{\ell}^a\in \mathbb Z_{\ell}^*$, with $1\leq a\leq \ell-1$  prime to $\ell$, such that sections $\sigma\in \eta_{C'}$ that descend to $C$ are characterized by the equation $\sigma(p)=\xi_{\ell}^a \sigma(q)$. Since by reversing the role of $p$ and $q$ we interchange $a$ and $\ell-a$, we obtain the a decomposition into irreducible $\lfloor l/2\rfloor$ components, each of them of order $2$ over $\Delta_0$.
For any $\ell\in \ZZ_{\ge 2}$ we get $\mbox{deg}(\Delta_0^{''}/\Delta_0)=\ell-1$.

\vskip 3pt
We now consider the case $X=C'\cup_{\{p, q\}} E$, where $E$ is an exceptional component. Then by definition $\eta_E=\OO_E(1)$, therefore $\mbox{deg}(\eta_{C'})=-1$. Furthermore, there exists an integer $1\leq a\leq \ell-1$ such that $\eta_{C'}^{\otimes (-\ell)}=\OO_{C'}(a\cdot p+(\ell-a)\cdot q)$. Let us denote by $\Delta_0^{(a)}$ the closure in $\rr_{g, \ell}$ of the locus of such points. By switching the role of $p$ and $q$ we can obviously restrict ourselves to the case $1\leq a\leq \lfloor \frac{\ell}{2}\rfloor$. Since the choice of $\eta_{C'}\in \mbox{Pic}^{-1}(C')$ as above uniquely determines the level curve $[C'\cup E, \eta, \phi]$, it follows that, when $\ell$ is prime,  $\mbox{deg}(\Delta_0^{(a)}/\Delta_0)=2\ell^{2g-2}$ (the factor $2$ accounts for the possibility of interchanging $p$ and $q$). We refer to \cite{CF} for the treatment of the general case $\ell\in \ZZ_{\ge 2}$ and the decomposition into irreducible components.

We follow again the convention of passing to
lower case symbols for the divisor classes in the moduli stack $\ol{\mathsf{R}}_{g,\ell}$:
 $\delta_0'=[\Delta_0']_{\QQ}$, $\delta_0''=[\Delta_0'']_{\QQ}$ and
$\delta_0^{(a)}=[\Delta_0^{(a)}]_{\QQ}$. As
a direct consequence of the local description \eqref{eq:loccoordf} the morphism $\mathsf{f}$ is
\'etale over  $\Delta_0'$ and $\Delta_0''$ and
ramified with order $\ell$
at $\Delta_0^{(a)}$.
(When $\ell$ is not prime the order of ramification at $\Delta_0^{(a)}$ is still $\ell$ because
$\ol{\mathsf{R}}_{g,\ell}$ as been defined as a substack of the
covering $\wt{\mathsf{M}}_g(\mathsf{B}\ZZ_\ell)$ of $\ol{\mathsf{M}}_g(\mathsf{B}\ZZ_\ell)$). Summarizing
we have the following relation in $\Pic(\ssrr_{g, \ell})$
$$\mathsf{f}^*(\delta_0)=\delta_0^{'}+\delta_0^{''}+\ell\sum_{a=1}^{\lfloor \frac{\ell}{2}\rfloor} \delta_0^{(a)}.$$
%

\begin{proposition}
For $\ell\geq 3$, the canonical class of the coarse moduli space $\rr_{g, \ell}$ is equal to
$$K_{\rr_{g, \ell}}=13\lambda-2(\delta_0^{'}+\delta_0^{''})-(\ell+1)\sum_{a=1}^{\lfloor \frac{\ell}{2} \rfloor} \delta_0^{(a)}-2\sum_{i=1}^{\lfloor \frac{g}{2} \rfloor} (\delta_i+\delta_{g-i}+\delta_{i:g-i})-\delta_{g-1}\in \mathsf{Pic}(\ssrr_{g, \ell}).$$
\end{proposition}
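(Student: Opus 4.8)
The plan is to compute $K_{\rr_{g,\ell}}$ by comparing it with the canonical class of $\mm_g$ via the forgetful map $f:\rr_{g,\ell}\to\mm_g$, keeping careful track of ramification. First I would recall Harris–Mumford's formula $K_{\mm_g}=13\lambda-2\delta_0-3\delta_1-2\sum_{i\geq 2}\delta_i$ on the moduli \emph{stack} $\ol{\mathsf M}_g$, together with the standard fact that the rational map from the stack to the coarse space contributes no discrepancy away from codimension $\geq 2$ loci for $g\geq 4$, so that comparing canonical classes of coarse spaces reduces to comparing canonical classes of stacks plus ramification of $\mathsf f$. The key input is Hurwitz's formula $K_{\ssrr_{g,\ell}}=\mathsf f^*K_{\ol{\mathsf M}_g}+\mathrm{Ram}(\mathsf f)$, where $\mathrm{Ram}(\mathsf f)$ is supported exactly on the boundary divisors over which $\mathsf f$ is ramified.

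The second step is to substitute the pullback formulas for the boundary classes, all of which are established earlier in the excerpt: the identity $\mathsf f^*(\delta_0)=\delta_0'+\delta_0''+\ell\sum_{a=1}^{\lfloor\ell/2\rfloor}\delta_0^{(a)}$ from \S\ref{sect:boundary}, the identity $\mathsf f^*(\delta_i)=\delta_i+\delta_{g-i}+\delta_{i:g-i}$ from \eqref{di} for $i\geq 2$, and the special behaviour at $i=1$, namely $\mathsf f^*(\Delta_1)=2(\Delta_1+\Delta_{1:g-1})+\Delta_{g-1}$, so that at the level of stack classes $\mathsf f^*(\delta_1)=\delta_1+\delta_{1:g-1}+\delta_{g-1}$ with $\mathsf f$ ramified of order $2$ along $\Delta_1$ and $\Delta_{1:g-1}$. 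Also $\lambda$ pulls back to $\lambda$. Next I would identify the ramification divisor: $\mathsf f$ is étale over $\Delta_0'$, $\Delta_0''$, and over all the $\Delta_i,\Delta_{g-i},\Delta_{i:g-i}$ with $i\geq 2$ (these statements are all asserted in \S\ref{sect:boundary}), ramified with order $\ell$ along each $\Delta_0^{(a)}$, and ramified with order $2$ along $\Delta_1$ and $\Delta_{1:g-1}$. Hence $\mathrm{Ram}(\mathsf f)=(\ell-1)\sum_{a=1}^{\lfloor\ell/2\rfloor}\delta_0^{(a)}+(\delta_1+\delta_{1:g-1})$.

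Putting these together: $K_{\ssrr_{g,\ell}}=\mathsf f^*(13\lambda-2\delta_0-3\delta_1-2\sum_{i\geq 2}\delta_i)+(\ell-1)\sum_a\delta_0^{(a)}+\delta_1+\delta_{1:g-1}$. Expanding, $13\lambda$ is unchanged; $-2\mathsf f^*\delta_0=-2(\delta_0'+\delta_0'')-2\ell\sum_a\delta_0^{(a)}$, and adding $(\ell-1)\sum_a\delta_0^{(a)}$ gives the coefficient $-(\ell+1)$ on each $\delta_0^{(a)}$; the $-3\mathsf f^*\delta_1=-3(\delta_1+\delta_{1:g-1}+\delta_{g-1})$, and adding $\delta_1+\delta_{1:g-1}$ yields $-2\delta_1-2\delta_{1:g-1}-3\delta_{g-1}=-2(\delta_1+\delta_{g-1}+\delta_{1:g-1})-\delta_{g-1}$, matching the claimed extra $-\delta_{g-1}$; and $-2\mathsf f^*\delta_i=-2(\delta_i+\delta_{g-i}+\delta_{i:g-i})$ for $i\geq 2$. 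Summing over all $i$ from $1$ to $\lfloor g/2\rfloor$ then produces exactly the stated formula. I expect the main subtlety to be the bookkeeping at $i=1$ — correctly reconciling the coarse-space pullback $f^*(\Delta_1)=2(\Delta_1+\Delta_{1:g-1})+\Delta_{g-1}$ with the stack-level pullback and the resulting ramification contribution, and checking that the automorphism-induced discrepancies between stack and coarse space (elliptic tails, the hyperelliptic and bielliptic loci) do not affect pluricanonical sections — but this last point is exactly the content of the extension results cited from \cite{HM}, \cite{FL}, and \cite{CF}, so it may be invoked rather than reproved. One should also double-check that the formula is being asserted for the coarse space $\rr_{g,\ell}$ while the computation naturally lives on the stack $\ssrr_{g,\ell}$ (as the notation $\mathsf{Pic}(\ssrr_{g,\ell})$ in the statement confirms), so no further correction terms are needed.
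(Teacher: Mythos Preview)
Your proposal is correct and follows essentially the same approach as the paper's proof, which simply states the Hurwitz formula $K_{\rr_{g,\ell}}=f^*K_{\mm_g}+\delta_1+\delta_{1:g-1}+(\ell-1)\sum_{a}\delta_0^{(a)}$ and invokes the Harris--Mumford expression for $K_{\mm_g}$ together with the pullback relation \eqref{di}. You have spelled out in detail the identification of the ramification divisor and the subsequent arithmetic that the paper leaves to the reader, and your bookkeeping at $i=1$ correctly produces the extra $-\delta_{g-1}$ term.
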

\begin{proof}
It follows from the Hurwitz formula $K_{\rr_{g, \ell}}=f^*K_{\mm_g}+\delta_1+\delta_{1:g-1}+(\ell-1)\sum_{a=1}^{\lfloor \frac{\ell}{2}\rfloor} \delta_0^{(a)}$, coupled with the expression for $K_{\mm_g}$, cf. \cite{HM} Theorem 2 and formula (\ref{di}).
\end{proof}

\subsection{The geometry of the universal semi-stable level $\ell$ curve} \label{sect:universal}
Consider the universal semi-stable curve ${{{\mathsf u}}}:X'_{g,\ell}\rightarrow \ssrr_{g, \ell}$
introduced in \S\ref{sect:moduli},
endowed with the tautological bundle $\P:= \eta_{g,\ell}^{X'}$
and the homomorphism of line bundles $$\Phi:\P^{\otimes \ell}\rightarrow \OO_{X'_{g,\ell}}.$$
For sake of clarity and in preparation of
the enumerative geometry studied in this paper, we detail the
proof of a preliminary result on the self-intersection of
$c_1(\P)$.
First, let us fix an integer $1\leq a\leq \lfloor \frac{\ell}{2}\rfloor$ and observe that the the pull-back of the boundary divisor $\Delta_0^{(a)}$ in $\ol{\mathsf{R}}_{g,\ell}$ splits into irreducible components
$${{{\mathsf u}}}^*(\Delta_0^{(a)})=\E_1^{(a)}+\ldots+\E_{{\ell}-1}^{(a)},$$
where $\E_i^{(a)}$ parametrizes the closure of the locus of
points that lie on the $i$th rational component of the chain
$E_1\cup\dots\cup E_{\ell-1}$ of $\ell-1$ rational lines
over a general point of $\Delta_0^{(a)}$.
%
\begin{proposition}\label{features} The following relations hold in $\mathsf{Pic}(\ssrr_{g, \ell})$:
\newline
\noindent\ (1) ${{{\mathsf u}}}_*\bigl([\E_i^{(a)}]\cdot [\E_{i+1}^{(a)}]\bigr)=\delta_0^{(a)}$ and ${{{\mathsf u}}}_*\bigl([\E_i^{(a)}]^2)=-2\delta_0^{(a)}$, for each $1\leq i\leq \ell-1$.
\newline
\noindent\ (2) ${{{\mathsf u}}}_*\bigl(c_1(\P)\cdot c_1(\omega_{\mathsf u})\bigr)=0$.
\newline
\noindent\ (3) ${{{\mathsf u}}}_*(c_1^2(\P))=-\sum_{1\le a\le \lfloor \frac{\ell}{2}\rfloor} \frac{a(\ell-a)}{\ell}\delta_0^{(a)}$.
\end{proposition}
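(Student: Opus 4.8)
The plan is to work locally at a general point of each boundary divisor $\Delta_0^{(a)}$ and to use the explicit description of the semistable model $X'_{g,\ell}$ from \S\ref{sect:moduli}. First I would establish (1). Near a general point of $\Delta_0^{(a)}$, the curve $X'$ degenerates to $C'\cup E_1\cup\dots\cup E_{\ell-1}$, a chain of $\ell-1$ rational curves $E_i\cong\pp^1$ glued to $C'$ at the two attaching points $p,q$; the divisors $\E_i^{(a)}$ on $X'_{g,\ell}$ are the loci swept out by these components. Each $\E_i^{(a)}$ fibres over $\Delta_0^{(a)}$ with fibre $\pp^1$ (a $\pp^1$-bundle up to finite cover), so ${{{\mathsf u}}}_*[\E_i^{(a)}]$ is a multiple of $\delta_0^{(a)}$; comparing with the classical computation for the Deligne--Mumford boundary (where an exceptional chain contributes via the intersection numbers $E_i\cdot E_{i+1}=1$ and $E_i^2=-2$ on the total space), I would get ${{{\mathsf u}}}_*([\E_i^{(a)}]\cdot[\E_{i+1}^{(a)}])=\delta_0^{(a)}$ and ${{{\mathsf u}}}_*([\E_i^{(a)}]^2)=-2\delta_0^{(a)}$. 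One must be slightly careful with the stacky structure and the ramification of $\mathsf f$ of order $\ell$ along $\Delta_0^{(a)}$, but the local coordinates \eqref{eq:loccoordf} pin down all the multiplicities.

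Next I would prove (2). On each fibre $X'=C'\cup E_1\cup\dots\cup E_{\ell-1}$ over a general point of $\Delta_0^{(a)}$, recall from \S\ref{sect:moduli} that $\eta'=\P|_{X'}$ restricts to $\OO_{E_i}$ for $i\neq \ell-a$ and to $\OO_{E_{\ell-a}}(1)$, while $\omega_{{{\mathsf u}}}$ restricts to $\OO_{E_i}$ on each $E_i$ in the chain (the dualizing sheaf of a semistable $\pp^1$ in a chain of length $\geq 2$ has degree $0$). Hence the degree of $\P\otimes\omega_{{{\mathsf u}}}^{-1}$, and in fact of $\P$ itself, is $0$ on every component where $\omega_{{{\mathsf u}}}$ has positive degree and vice versa, and a local intersection computation component-by-component (using $c_1(\P)\cdot\E_i^{(a)}$ supported only on $E_{\ell-a}$, combined with (1)) shows the pushforward ${{{\mathsf u}}}_*(c_1(\P)\cdot c_1(\omega_{{{\mathsf u}}}))$ vanishes identically in $\sPic(\ssrr_{g,\ell})$. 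Over the open locus $\wt{\cM}_g$ the bundle $\P$ has relative degree $0$ and the product against the relative dualizing sheaf pushes forward to a boundary class, so the only contributions to check are the boundary ones, which I have just shown vanish.

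Finally, for (3), I would compute ${{{\mathsf u}}}_*(c_1^2(\P))$ as a boundary expression $\sum_a c_a\,\delta_0^{(a)}$, since $c_1(\P)$ restricted to a smooth paracanonical curve is torsion and contributes nothing over $\cR_{g,\ell}$. To find $c_a$, I would write $c_1(\P)$ modulo pullbacks from $\ssrr_{g,\ell}$ as a combination $\sum_i \alpha_i\,[\E_i^{(a)}]$ of the exceptional components over $\Delta_0^{(a)}$, determined by the conditions $\deg(\P|_{E_j})=\delta_{j,\ell-a}$ together with the intersection matrix $([\E_i^{(a)}]\cdot[\E_j^{(a)}])$ from part (1) — this is exactly the (negative of the) Cartan matrix $A_{\ell-1}$, so the coefficients $\alpha_i$ are the entries of its inverse, giving $\alpha_i = \min(i,\ell-a)\cdot\min(\ell-i,a)/\ell$ type expressions. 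Squaring and pushing forward via (1) then collapses to $-\tfrac{a(\ell-a)}{\ell}\delta_0^{(a)}$; summing over $a$ yields the stated formula. \textbf{The main obstacle} I expect is part (3): one has to solve the linear system over the $A_{\ell-1}$ intersection lattice correctly and keep track of the factor of $\ell$ coming from the ramification of $\mathsf f$ along $\Delta_0^{(a)}$ in \eqref{eq:loccoordf}, so that the stack-theoretic multiplicities match; getting this bookkeeping exactly right (rather than off by a factor of $\ell$ or a sign) is the delicate point, whereas (1) and (2) are comparatively formal once the local model is in hand.
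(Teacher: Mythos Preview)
Your outline for (1) and (2) matches the paper's, which dismisses both as immediate. For (3) you and the paper both reduce to solving a linear system over the $A_{\ell-1}$ intersection lattice and then square and push forward via (1); the final arithmetic is the same.

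The one substantive difference is how you justify writing $c_1(\P)$ as a $\QQ$-combination of the exceptional classes $[\E_i^{(a)}]$. You say ``modulo pullbacks from $\ssrr_{g,\ell}$'' and then determine the coefficients from the degree conditions $\deg(\P|_{E_j})=\delta_{j,\ell-a}$. But matching degrees on the $E_j$'s does not by itself force $c_1(\P)-\sum_i\alpha_i[\E_i^{(a)}]$ to be a pullback: you would need to know the structure of $\sPic(X'_{g,\ell})/\mathsf u^*\sPic(\ssrr_{g,\ell})$, and you have not argued that it is generated by the $\E_i^{(a)}$. The paper bypasses this by using a piece of data you do not invoke, namely the universal homomorphism $\Phi:\P^{\otimes\ell}\to\OO_{X'_{g,\ell}}$. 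Viewed as a section of $\P^{\otimes(-\ell)}$, $\Phi$ is nowhere zero off the exceptional locus (this is the definition of a level structure), so its zero divisor $Z(\Phi)$ is an \emph{honest} effective divisor supported on $\bigcup_{a,i}\E_i^{(a)}$ and represents $-\ell\,c_1(\P)$ on the nose, not merely modulo pullbacks. The coefficients are then pinned down by the vanishing orders of $\Phi$ at the two attaching points (giving $c_1^{(a)}=a$, $c_{\ell-1}^{(a)}=\ell-a$) together with $\deg(Z(\Phi)|_{E_i^{(a)}})=0$ for $i\neq\ell-a$; this is the same linear system you describe, but now its relevance is fully justified. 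This also makes the factor of $\ell$ you flag as the ``main obstacle'' completely transparent: it is simply the exponent in $\P^{\otimes(-\ell)}$, and no separate bookkeeping of the ramification of $\mathsf f$ is needed.
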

\begin{proof}
The first two statements being immediate, we proceed to the last one (see also \cite[Lem.~3.1.4]{CGRR}).
 For each $1\leq a\leq \lfloor \frac{\ell}{2}\rfloor $,
we fix a general quasi-stable $\ell$th root $(X^{a}, \eta, \phi)$ over $\Delta_{0}^{(a)}$.
Here $X^{a}:=C'\cup_{\{p, q\}} E$ is a quasi-stable
curve and $\eta_{C'}^{\otimes \ell}=\OO_{C'}(-a\cdot p-(\ell-a)\cdot q)$. The corresponding semi-stable fibre
of $X'_{g,\ell}$ mapping to $X^a$ is a semi-stable curve $X'$ obtained by setting
$E_{\ell-a}^{(a)}:=E$, and gluing to the points $p, q\in C$
smooth rational curves $E_1^{(a)}, \ldots, E_{\ell-a-1}^{(a)}$,
$E_{\ell-a}^{(a)}$, $E_{\ell-a+1}^{(a)}, \ldots, E_{\ell-1}^{(a)}$ forming a chain.
The line bundle $\eta'$ on $X'$ is the restriction $\P_{|X'}$
isomorphic to $\eta_{C'}$ on $C'$, to $\OO$ on ${E_i^{(a)}}$
for $i\neq \ell-a$, and to
$\OO(1)$ on ${E_{\ell-a}^{(a)}}$.
By construction we have that $X'\cap \E_i^{(a)}=E_i^{(a)}.$

We claim that the vanishing locus of the section $\Phi\in H^0(X'_{g,\ell}, \P^{\otimes (-\ell)})$ is precisely the divisor
\begin{equation}\label{vanlocus}
\sum_{a=1}^{\lfloor \frac{\ell}{2}\rfloor} \Bigl(a\sum_{i=1}^{\ell-a} i\ \E_i^{(a)}+(\ell-a)\sum_{i=1}^{a-1} (a-i)\E_{\ell-a+i}^{(a)}\Bigr).
\end{equation}
Indeed, we already know that $\cD:=\mbox{supp}(Z(\Phi))$ can be expressed as a linear combination of exceptional divisors
$\sum_{a, i} c_i^{(a)}\E_i^{(a)}$, where $1\leq i\leq \ell-1$ and $1\leq a\leq \lfloor \frac{\ell}{2}\rfloor$, which is furthermore characterized by two sets of conditions:
\begin{enumerate}
\item For each $1\leq a \leq \lfloor \frac{\ell}{2}\rfloor$, one has that $c_1^{(a)}=a$\  and \ $c_{\ell-1}^{(a)}=\ell-a$.
\item For each $i\neq \ell-a$, one has $\mathrm{deg}(\mathcal{D}_{| E_i^{(a)}})=0$.
\end{enumerate}
The first condition expressed the fact that the vanishing order of $\Phi_{|C}$ at $p$ (respectively $q$) is equal to $a$ (respectively $\ell-a$). The second condition expresses the fact that the restriction of $\Phi$ on $E_i^{(a)}$
is the trivial morphism along each exceptional divisor $E_i^{(a)}$ with $i\neq \ell-a$,
inserted when passing from $X_{g, \ell}$ to $X'_{g, \ell}$. Taking into account that $\OO(\E_i^{(a)})=\OO(-2)$
on ${E_i^{(a)}}$,
the two conditions lead to a linear system of equation in the coefficients $c_i^{(a)}$, which we then solve to obtain the claimed formula (\ref{vanlocus}), which also computed $-\ell c_1(\P)$. Squaring this formula, pushing it forward while using formulas (1), (2), we obtain
$$\ell^2 {{{\mathsf u}}}_*(c_1^2(\P))=-2\Bigl(\sum_{i=1}^{\ell-a} a^2i^2+\sum_{i=1}^{a-1}(\ell-a)^2i^2+\sum_{i=1}^{\ell-a-1}a^2 i(i+1)+\sum_{i=1}^{a-1}(\ell-a)^2 i(i+1)\Bigr)\delta_0^{(a)},$$
which after manipulations leads to the claimed formula.
\end{proof}
\section{Syzygy jumping loci over $\rr_{g, \ell}$}
Following a principle already explained in \cite{F1}, one can construct tautological divisors on moduli spaces of curves defined in terms of syzygies of the parametrized objects. Such loci have a determinantal structure over an open subset of $\cR_{g, \ell}$, hence one can speak of their expected (co)dimension. When the expected codimension is $1$, one has virtual divisors on $\cR_{g, \ell}$.
\vskip 3pt
We begin by setting notation. For a smooth curve $C$, a line bundle $L$ and a sheaf $\F$ on $C$, we define the Koszul cohomology group $K_{p, q}(C;\F, L)$ as the cohomology of the complex
$$\bigwedge^{p+1} H^0(C, L)\otimes H^0(C, \F\otimes L^{\otimes(q-1)})\stackrel{d_{p+1, q-1}}\longrightarrow \bigwedge^p H^0(C, L)\otimes H^0(C, \F\otimes L^{\otimes q})\stackrel{d_{p, q}}\longrightarrow$$
$$ \stackrel{d_{p, q}}\longrightarrow \bigwedge^{p-1} H^0(C, L)\otimes H^0(C, \F\otimes L^{\otimes (q+1)}).$$
When $\F=\OO_C$, one writes $K_{p, q}(C, L):=K_{p, q}(C; \OO_C, L)$. When $\F$ is a line bundle, these Koszul cohomology groups can be interpreted in terms of syzygies of certain $0$-dimensional subschemes of $C$.
\begin{example}\label{zerodim}
Suppose $[C, \eta]\in \cR_{g, \ell}$  and $\eta\notin C_2-C_2$. Let $L:=K_C\otimes \eta\in \mbox{Pic}^{2g-2}(C)$ be the very ample paracanonical line bundle inducing an embedding $\phi_L:C\rightarrow \PP^{g-2}$. For an integer $1\leq k\leq \ell-1$, since $H^0(C, \eta^{\otimes k})=0$ one has that
$$K_{p, 1}(C; \eta^{\otimes k}, L)=\mbox{Ker}\Bigl\{\bigwedge^p H^0(L)\otimes H^0(L\otimes \eta^{\otimes k})\rightarrow \bigwedge^{p-1} H^0(L)\otimes H^0(L^{\otimes 2}\otimes \eta^{\otimes k})\Bigr\}.$$
This group can be viewed as parametrizing the syzygies of a $0$-dimensional scheme $Z\subset C\subset \PP^{g-2}$, where $Z\in |K_C\otimes \eta^{\otimes (1-k)}|$, see \cite{FMP} Proposition 1.6. Precisely,
\begin{equation}\label{mrcint}
b_{p+1, 1}(Z)=\mathrm{dim}\ K_{p, 1}(C; \eta^{\otimes k}, L).
\end{equation}
\end{example}

We explain Conjecture B and Theorems \ref{general bundles} and \ref{thmC} formulated in the introduction. We fix line bundles $\eta, \xi\in \mbox{Pic}^0(C)$, with $\xi\neq \OO_C$ and $\eta\otimes \xi\neq \OO_C$. To the paracanonical linear series $L:=K_C\otimes \eta$ we associate the kernel bundle defined via the exact sequence
$$0\longrightarrow M_L\longrightarrow H^0(C, L)\otimes \OO_C\longrightarrow  L\longrightarrow 0.$$
Using standard arguments, see e.g. \cite{FMP} Proposition 1.6, one has the identifications
\begin{equation}\label{mrc3}
K_{i, 1}(C; \xi, K_C\otimes \eta)=H^0\bigl(C, \bigwedge^i M_L\otimes L\otimes \xi\bigr),
\end{equation}
$$K_{i-1, 2}(C; \xi, K_C\otimes \eta)=
H^1\bigl(C, \bigwedge^i M_L\otimes L\otimes \xi\bigr).$$
The difference of the dimensions of the two cohomology groups is the Euler-Poincar\'e characteristic of a vector bundle on $C$, which in the case $\eta\neq \OO_C$ is equal to
\begin{equation}\label{differ}
\mbox{dim } K_{i, 1}(C; \xi, K_C\otimes \eta)-\mbox{dim } K_{i-1, 2}(C; \xi, K_C\otimes \eta)=(g-1){g-2\choose i}\Bigl(1-\frac{2i}{g-2}\Bigr).
\end{equation}
The naturality of the resolution of $\Gamma_C(\xi, L)$ as a $\mbox{Sym } H^0(C, L)$-module is equivalent to the vanishing for all $i$ of one of the groups $K_{i, 1}(C; \xi, L)$ or $K_{i-1, 2}(C; \xi, L)$, depending on the sign computed by (\ref{differ}). The resolution being minimal, it suffices to verify this for the values of $i$ when the expression (\ref{differ}), viewed as a function of $i$, changes sign. The resolution is pure, if the corresponding cohomology group vanishes for $i$ such that the expression in (\ref{differ}) is equal to zero.
\begin{lemma}\label{natural}
Let $C$ be a curve of genus $g$ and $\eta, \xi\in \mathrm{Pic}^0(C)-\{\OO_C\}$ such that $\eta\otimes \xi\neq \OO_C$.

\noindent (1) The resolution of $\Gamma_C(\xi, K_C\otimes \eta)$ as a $\mathrm{Sym } H^0(C, K_C\otimes \eta)$-module is natural if and only if
$$K_{\lfloor \frac{g-1}{2}\rfloor, 1}(C; \xi, K_C\otimes \eta)=0\  \ \mbox{ and }\ \ K_{\lfloor \frac{g-1}{2}\rfloor, 1}(C; (\eta\otimes \xi)^{\vee}, K_C\otimes \eta)=0.$$

\noindent (2) The resolution of $\Gamma_C(\xi, K_C)$ as a  $\mathrm{Sym }H^0(C, K_C)$-module is natural if and only if
$$ K_{\lfloor \frac{g}{2}\rfloor, 1}(C; \xi, K_C)=0 \ \ \mbox{ and } \ K_{\lfloor \frac{g}{2}\rfloor , 1}(C; \xi^{\vee}, K_C)=0.$$
\end{lemma}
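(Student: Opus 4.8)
The plan is to use the standard duality for Koszul cohomology groups, which in this paracanonical setting comes from Serre duality applied to the kernel bundle description in (\ref{mrc3}). The point is that the naturality of the resolution of $\Gamma_C(\xi, L)$ as a $\mathrm{Sym}\, H^0(C,L)$-module amounts, by the discussion preceding the lemma, to the vanishing of the ``smaller'' of the two groups $K_{i,1}(C;\xi,L)$ and $K_{i-1,2}(C;\xi,L)$ for every $i$, and since the resolution is minimal it is enough to check this for the single value (or, in odd genus, the pair of values) of $i$ where the Euler characteristic (\ref{differ}) changes sign. Concretely, for $L=K_C\otimes\eta$ the bundle $\bigwedge^i M_L$ has rank $\binom{g-2}{i}$ and $\det M_L = L^{\vee}$, so $\bigl(\bigwedge^i M_L\bigr)^{\vee}\cong \bigwedge^{g-2-i} M_L\otimes L$. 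Hence Serre duality gives
\[
H^1\bigl(C,\textstyle\bigwedge^i M_L\otimes L\otimes\xi\bigr)^{\vee}\cong H^0\bigl(C,\textstyle\bigwedge^{g-2-i} M_L\otimes L\otimes (\eta\otimes\xi)^{\vee}\bigr),
\]
which by (\ref{mrc3}) again identifies $K_{i-1,2}(C;\xi,K_C\otimes\eta)$ with $K_{g-2-i,1}(C;(\eta\otimes\xi)^{\vee},K_C\otimes\eta)$. This is the Koszul duality one expects, adapted to the twist by $\eta$ which shifts the ``dual'' class from $\xi$ to $(\eta\otimes\xi)^{\vee}$.

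First I would record the case $\eta\neq\OO_C$ for part (1). The function $i\mapsto (g-1)\binom{g-2}{i}\bigl(1-\tfrac{2i}{g-2}\bigr)$ from (\ref{differ}) is positive for $i<\tfrac{g-2}{2}$ and negative for $i>\tfrac{g-2}{2}$; it vanishes exactly when $g$ is even and $i=\tfrac{g-2}{2}=\lfloor\tfrac{g-1}{2}\rfloor$. So in even genus the only potentially nonzero Betti number on the critical diagonal is $b_{i,1}$ with $i=\lfloor\tfrac{g-1}{2}\rfloor$, and naturality is equivalent to $K_{\lfloor\frac{g-1}{2}\rfloor,1}(C;\xi,L)=0$ together with the ``mirror'' vanishing $K_{\lfloor\frac{g-1}{2}\rfloor,1}(C;(\eta\otimes\xi)^{\vee},L)=0$ coming from the $b_{i-1,2}$ side at the symmetric spot $i=\tfrac{g}{2}$, which by the duality above is $K_{g-2-g/2,1}(C;(\eta\otimes\xi)^{\vee},L)=K_{(g-2)/2,1}(C;(\eta\otimes\xi)^{\vee},L)$. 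In odd genus, (\ref{differ}) is never zero; it changes sign between $i=\tfrac{g-3}{2}=\lfloor\tfrac{g-1}{2}\rfloor$ and $i=\tfrac{g-1}{2}$, and one checks that the ``small'' group at $i=\lfloor\tfrac{g-1}{2}\rfloor$ is $K_{\lfloor\frac{g-1}{2}\rfloor,1}(C;\xi,L)$, while the ``small'' group at $i=\tfrac{g-1}{2}$ is $K_{(g-3)/2,2}(C;\xi,L)$, which the duality identifies with $K_{g-2-(g-1)/2,1}(C;(\eta\otimes\xi)^{\vee},L)=K_{(g-3)/2,1}(C;(\eta\otimes\xi)^{\vee},L)=K_{\lfloor\frac{g-1}{2}\rfloor,1}(C;(\eta\otimes\xi)^{\vee},L)$. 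Either way one lands on the two stated conditions.

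For part (2) the argument is the same but with $\eta=\OO_C$, i.e. $L=K_C$, where $\det M_{K_C}=K_C^{\vee}$ and the Euler characteristic formula (\ref{differ}) must be recomputed: now $\chi\bigl(\bigwedge^i M_{K_C}\otimes K_C\otimes\xi\bigr)=(g-1)\binom{g-1}{i}\bigl(1-\tfrac{2i}{g-1}\bigr)$ (the rank of $M_{K_C}$ is $g-1$, not $g-2$), which changes sign around $i=\tfrac{g-1}{2}$ and vanishes precisely when $g$ is odd at $i=\tfrac{g-1}{2}=\lfloor\tfrac{g}{2}\rfloor$. The Serre duality now reads $\bigl(\bigwedge^i M_{K_C}\bigr)^{\vee}\cong\bigwedge^{g-1-i} M_{K_C}\otimes K_C$, so $K_{i-1,2}(C;\xi,K_C)\cong K_{g-1-i,1}(C;\xi^{\vee},K_C)$ — note there is no $\eta$-twist here, so the dual class is simply $\xi^{\vee}$ — and repeating the even/odd bookkeeping yields the two conditions $K_{\lfloor g/2\rfloor,1}(C;\xi,K_C)=0$ and $K_{\lfloor g/2\rfloor,1}(C;\xi^{\vee},K_C)=0$.

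The only genuinely delicate point, and the step I would write most carefully, is the bookkeeping that on the diagonal $\{b_{i,j}:i+j-1=\text{const}\}$ there is, for a minimal resolution, exactly one place where the relevant alternating sum forces a ``large'' Betti number and the rest are governed by the sign of (\ref{differ}) — i.e. matching up which of the two candidate groups is the one whose vanishing must be imposed, and verifying that the duality sends the critical $(i-1,2)$-group to precisely the critical $(\lfloor\frac{g-1}{2}\rfloor,1)$-group of the dual class rather than to a neighboring one. Once the indices are pinned down this is just arithmetic with binomial coefficients and floors; I do not expect any geometric obstacle, and the minimality of the resolution (which guarantees no cancellation between the two groups, so that naturality really is the conjunction of these two vanishings) is already built into the setup via (\ref{mrc3}).
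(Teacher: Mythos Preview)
Your approach is exactly the paper's: reduce to the critical index via (\ref{differ}) and then invoke Koszul duality (which you derive by hand via Serre duality on $\bigwedge^i M_L$, while the paper simply cites Green's duality theorem). The strategy is sound, and part (2) is carried out correctly.

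However, the bookkeeping in part (1) slips in precisely the place you flagged as delicate. For odd $g$ you write $\lfloor\tfrac{g-1}{2}\rfloor=\tfrac{g-3}{2}$, but since $g-1$ is even this floor equals $\tfrac{g-1}{2}$. With the paper's notation $g=2i+1$, so $i=\tfrac{g-1}{2}=\lfloor\tfrac{g-1}{2}\rfloor$, the two critical conditions are $K_{i,1}(C;\xi,L)=0$ (at index $i$, where (\ref{differ}) is negative) and $K_{i-2,2}(C;\xi,L)=0$ (at index $i-1$, where it is positive); your ``small group'' $K_{(g-3)/2,2}$ is off by one. Dualizing the correct group $K_{i-2,2}(C;\xi,L)$ via your own formula $K_{j-1,2}(C;\xi,L)^{\vee}\cong K_{g-2-j,1}(C;(\eta\otimes\xi)^{\vee},L)$ with $j=i-1$ gives $K_{g-1-i,1}=K_{i,1}$, which is indeed $K_{\lfloor(g-1)/2\rfloor,1}$. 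So the final statement is right, but your chain of indices does not actually reach it. In the even-genus paragraph there is a parallel slip: the mirror condition comes from $K_{i_0-1,2}$ at the same critical index $i_0=\tfrac{g-2}{2}$ (not from ``the symmetric spot $i=\tfrac{g}{2}$''), and $g-2-\tfrac{g}{2}=\tfrac{g-4}{2}$, not $\tfrac{g-2}{2}$; the correct dual of $K_{i_0-1,2}(C;\xi,L)$ is $K_{g-2-i_0,1}=K_{i_0,1}(C;(\eta\otimes\xi)^{\vee},L)$, which recovers the claim. None of this is a conceptual gap---once you fix the value of the floor and track the single shift in the duality, the argument goes through verbatim as in the paper.
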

\begin{proof} Assume that $g$ is odd and write $g=2i+1$. From (\ref{differ}), the resolution of the module $\Gamma_C(\xi, K_C\otimes \eta)$ is natural if and only if $K_{i, 1}(C; \xi, K_C\otimes \eta)=K_{i-2, 2}(C; \xi, K_C\otimes \eta)=0$. By the duality theorem \cite{G}, the dual of the last group is canonically isomorphic to
$$K_{i, 0}(C; K_C\otimes \xi^{\vee}, K_C\otimes \eta)=K_{i, 1}(C; (\eta\otimes \xi)^{\vee}, K_C\otimes \eta).$$
The case of the resolution of $\Gamma_C(\xi, K_C)$ (that is, $\eta=\OO_C$) is similar and we skip details.
\end{proof}
\vskip 3pt

We now describe the structure of the locus $\cU_{g, \ell}$ mentioned in Theorem \ref{thmC}. For $g=2i+1$, a result from \cite{FMP} provides an identification of cycles valid for each curve:
$$\Bigl\{\eta\in \mbox{Pic}^0(C): h^0(C, \bigwedge^i M_{K_C}\otimes K_C\otimes \eta)\geq 1\Bigr\}=C_i-C_i\subset \mbox{Pic}^0(C).$$
The right hand side denotes the $i$-th difference variety consisting of line bundles of the form $\OO_C(D-E)$, where
$D, E\in C_i$. This establishes the following equivalence
$$K_{i, 1}(C; \eta, K_C)\neq 0\Leftrightarrow \eta\in C_i-C_i.$$
After tensoring with $K_C\otimes \eta$ and taking cohomology in the short exact sequence
$$0\longrightarrow \bigwedge^i M_{K_C}\longrightarrow \bigwedge^i H^0(C, K_C)\otimes \OO_C\longrightarrow \bigwedge^{i-1} M_{K_C}\otimes K_C\longrightarrow 0,$$
we reformulate the last condition as follows: $[C, \eta]\in \cU_{g, \ell}$ if and only if the map
\begin{equation}\label{ugl}
\chi([C, \eta]):\bigwedge^i H^0(K_C)\otimes H^0(K_C\otimes \eta)\rightarrow H^0\bigl(C, \bigwedge^{i-1} M_{K_C}\otimes K_C^{\otimes 2}\otimes \eta\bigr)
\end{equation}
is an isomorphism. Since $h^0(\bigwedge^{i-1} M_{K_C}\otimes K_C^{\otimes 2}\otimes \eta)=\chi(\bigwedge^{i-1} M_{K_C}\otimes K_C^{\otimes 2}\otimes \eta)={2i\choose i-1}(4i+2)$, note that the locus $\cU_{g, \ell}$ can be defined as the degeneracy locus of a morphism between vector bundles of the same rank over $\mathsf{R}_{g, \ell}$, whose fibre over a point $[C, \eta]$ is precisely the map $\chi([C, \eta]$ defined above. Thus $\cU_{g, \ell}$ is a virtual divisor over $\cR_{g, \ell}$.

We show that  that $\chi([C, \eta])$ is an isomorphism for a general level curve, thus establishing Theorem \ref{thmC}.
In fact, we prove a more precise result valid for all genera\footnote{We are grateful to B. Poonen for pointing out to us an inaccuracy in the original version of this result.}:
\begin{theorem}\label{hyper}
Let $[C, p]\in \cM_{g, 1}$ be a general hyperelliptic curve of genus $g\geq 2$ together with a Weierstrass point. Then there exists a torsion point 
$\eta\in \mathrm{Pic}^0(C)[\ell]-\{\OO_C\}$ such that $$H^0\bigl(C, \eta((g-1)p)\bigr)=0.$$
\end{theorem}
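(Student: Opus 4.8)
The plan is to specialise to a maximally degenerate hyperelliptic curve, where the $\ell$-torsion and the effectivity of degree-$(g-1)$ line bundles become purely combinatorial, to exhibit the required $\eta$ there by a Vandermonde computation, and then to propagate the conclusion to the general hyperelliptic curve by semicontinuity. For the degenerate model, fix general scalars $t_1,\dots,t_g$ and let $\nu\colon\PP^1\to C_0$ be the nodal rational curve of arithmetic genus $g$ obtained by gluing the pairs of conjugate points $\{t_j,-t_j\}$; concretely $C_0\colon\ y^2=(x-r)\prod_{j=1}^g(x-s_j)^2$ with $s_j=t_j^2+r$, normalised by $w\mapsto(x,y)=(w^2+r,\ w\prod_j(w^2+r-s_j))$, the hyperelliptic involution being $w\mapsto-w$ and $p:=\nu(\infty)$ a smooth Weierstrass point. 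Then $\mathrm{Pic}^0(C_0)[\ell]\cong\mu_\ell^g$, the point $\eta=(\lambda_1,\dots,\lambda_g)$ being the line bundle with trivial pull-back to $\PP^1$ and descent datum $\lambda_j$ at the $j$-th node. Since $(g-1)p$ avoids the nodes, a global section of $\eta\otimes\OO_{C_0}((g-1)p)$ is a polynomial $P(w)=\sum_{k=0}^{g-1}c_kw^k$ obeying the $g$ conditions $P(t_j)=\lambda_jP(-t_j)$, i.e. $\sum_{k=0}^{g-1}c_k\,t_j^{\,k}\bigl(1-(-1)^k\lambda_j\bigr)=0$ for $j=1,\dots,g$.

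It then suffices to choose $\eta\neq\OO_{C_0}$ of order dividing $\ell$ making the $g\times g$ matrix $M=\bigl(t_j^{\,k}(1-(-1)^k\lambda_j)\bigr)_{1\le j\le g,\ 0\le k\le g-1}$ invertible. If $\ell\ge3$, take $\lambda_1=\dots=\lambda_g=\zeta$ with $\zeta^\ell=1$ and $\zeta\notin\{1,-1\}$ (possible exactly for $\ell\ge3$); pulling the scalar $1-(-1)^k\zeta$ out of the $k$-th column gives
$$\det M=\prod_{k=0}^{g-1}\bigl(1-(-1)^k\zeta\bigr)\cdot\!\!\prod_{1\le j<j'\le g}\!\!(t_{j'}-t_j)=(1-\zeta)^{\lceil g/2\rceil}(1+\zeta)^{\lfloor g/2\rfloor}\!\!\prod_{1\le j<j'\le g}\!\!(t_{j'}-t_j)\neq0.$$
If $\ell=2$, take $\lambda_j=-1$ for exactly $\lceil g/2\rceil$ of the indices and $\lambda_j=1$ for the remaining $\lfloor g/2\rfloor$; a row with $\lambda_j=1$ is then supported on the odd columns and a row with $\lambda_j=-1$ on the even columns, so after reordering $M$ is block-diagonal with blocks $\bigl(2\,(t_j^2)^a\bigr)$ and $\bigl(2\,t_j(t_j^2)^a\bigr)$, both Vandermonde matrices in the distinct values $t_j^2=s_j-r$, hence invertible; and $\eta\neq\OO_{C_0}$ since $\lceil g/2\rceil\ge1$. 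In either case $H^0\bigl(C_0,\eta\otimes\OO_{C_0}((g-1)p)\bigr)=0$, and in fact $\eta$ has order exactly $\ell$.

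Finally one smooths $C_0$ inside the hyperelliptic family, say by the pencil $y^2=(x-r)\prod_{j=1}^g\bigl((x-s_j)^2-\varepsilon_j\bigr)$ over a disc $\Delta$, keeping the section $p=\infty$. The $\ell$-torsion of the relative generalised Jacobian is étale over $\Delta$, so $\eta$ lifts to a section $\eta_\Delta$ with $\eta_0=\eta$ and $\eta_t\neq\OO_{C_t}$ for every $t$; by semicontinuity of $h^0$ applied to the flat family $\eta_\Delta\otimes\OO((g-1)p)$ one gets $h^0\bigl(C_t,\eta_t((g-1)p)\bigr)=0$ for $t\neq0$ near $0$. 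Hence the locus of triples $(C,p,\eta)$ with $C$ smooth hyperelliptic, $\eta\in\mathrm{Pic}^0(C)[\ell]-\{\OO_C\}$ and $h^0(\eta((g-1)p))=0$ is a non-empty open subset of the finite étale cover $\mathrm{Jac}[\ell]-\{0\}$ of the irreducible moduli space of smooth hyperelliptic curves of genus $g$ with a marked Weierstrass point; since every component of that cover dominates the base, this open locus dominates the base too, which is precisely the assertion of the theorem.

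The content is concentrated in the middle step: one must choose the torsion datum so that the $g\times g$ linear system degenerates to a non-singular Vandermonde-type matrix, and for $\ell=2$ this is genuinely rigid — the constant datum $\lambda_j\equiv-1$ fails, and one is forced into the precise split $\lceil g/2\rceil$ versus $\lfloor g/2\rfloor$ between the two eigenvalues $\pm1$; this is the point at which the naïve form of the statement, with an arbitrary or a ``generic'' order-$\ell$ twist, breaks down. The description of $\mathrm{Pic}^0(C_0)$ and of sections of a line bundle on a nodal rational curve, and the étale propagation of the torsion point in the last step, are routine.
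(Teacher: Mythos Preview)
Your argument is correct and takes a genuinely different route from the paper's own proof. The paper proceeds by \emph{induction on $g$}: assuming a hyperelliptic level curve $[C',\eta',p']$ of genus $g-1$ with $H^0(C',\eta'((g-2)p'))=0$, it attaches an elliptic tail $E$ at $p'$, chooses $p\in E$ with $2(p-p')\equiv0$ so that $[C'\cup_{p'}E,\,p]$ is a degenerate hyperelliptic pointed curve, picks $\eta_E\in\mathrm{Pic}^0(E)[\ell]$ avoiding a single forbidden value, and then uses a short \emph{limit linear series} argument to rule out $h^0\geq1$ on the union. Your proof instead performs a single maximal degeneration to a $g$-nodal rational curve, where the problem becomes the explicit invertibility of a $g\times g$ Vandermonde-type matrix.

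Each approach has its virtues. The paper's inductive-elliptic-tail argument is short and uses standard degeneration machinery; it also makes transparent why only the \emph{existence} of one good $\eta$ is claimed (at each step one must avoid a single bad value of $\eta_E$). Your approach is more explicit and self-contained: it identifies, for every $\ell$, a concrete combinatorial type of torsion bundle for which the vanishing holds, and the $\ell=2$ analysis (the forced $\lceil g/2\rceil$/$\lfloor g/2\rfloor$ split) pinpoints exactly why one cannot expect the statement for an \emph{arbitrary} nontrivial $\eta$. It is also pleasantly consonant with Section~4 of the paper, where the same nodal rational model is used for the computer verifications. One small remark on phrasing: the $\ell$-torsion of the relative generalised Jacobian over your disc is only \emph{quasi}-finite \'etale (its rank jumps from $\ell^g$ to $\ell^{2g}$), but that is exactly what you need --- over a henselian base every closed point of the special fibre lies on a finite \'etale component and hence lifts to a section, so the propagation step goes through as written.
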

First we explain how Theorem \ref{hyper} implies Theorem \ref{thmC}. Let $C$ be hyperelliptic, $A=\OO_C(2p)$ the hyperelliptic line bundle and $i:=\lfloor \frac{g}{2}\rfloor$. Then $M_{K_C}^{\vee}=A^{\oplus (g-1)}$, therefore
$$\bigwedge ^i M_{K_C}^{\vee}=\bigl(A^{\otimes i}\bigr)^{\oplus {g-1\choose i}}.$$
It follows from Lemma (\ref{natural}) that $K_{i, 1}(C; \eta, K_C)= 0$ if and only if $H^0(C, \eta((g-1)p))= 0$ when $g$ is odd, respectively
$H^0(C, \eta((g-2)p))=0$, when $g$ is even. Clearly, the statement of Theorem \ref{hyper} implies the naturality of the resolution $\Gamma_C(\eta, K_C)$ for all genera.

\vskip 3pt
\noindent \emph{Proof of Theorem \ref{hyper}.} Inductively we assume that $[C', \eta']\in \cR_{g-1, \ell}$ is a hyperelliptic level $\ell$ curve with a point $p'\in C'$ such that $h^0(C', \OO_{C'}(2p'))=2$ and $H^0\bigr(C', \eta'((g-2)p')\bigl)=0$. We consider a pointed elliptic curve $[E, p']\in \cM_{1, 1}$, which we attach to $C'$ at the point $p'$, then choose $p\in E-\{p'\}$ such that $2(p-p')\equiv 0$. Then $[C:=C\cup_{p'} E, p]\in \mm_{g, 1}$ is a degenerate hyperelliptic curve and $p\in C$ is a hyperelliptic Weierstrass point. This follows by exhibiting an admissible double covering $$\varphi:C\rightarrow (\PP^1)_1\cup_t (\PP^1)_2,$$ where $\varphi_{C'}:C'\rightarrow (\PP^1)_1$ is the hyperelliptic cover and $\varphi_{E}:E\rightarrow (\PP^1)_2$ is the double cover ramified at $p$ and $p'$. Note that $\varphi(p')=t\in (\PP^1)_1\cap (\PP^1)_2$.

\vskip 3pt
We choose an $\ell$-torsion point $\eta_E\in \mbox{Pic}^0(E)[\ell]$ such that $\eta_E\neq \OO_C\bigl((g-1)(p'-p)\bigr)$. Such a choice is possible since there at least two points of order $\ell$ inside $\mbox{Pic}^0(E)$. Then
$$[C'\cup_{p'} E,  \  \eta_{C'}, \ \eta_E]\in \rr_{g, \ell},$$
and we claim that this curve does not lie in the closure of the locus of  hyperelliptic level $\ell$ curves $[X_t, \eta_t]\in \cR_{g, \ell}$ with a  point $p_t\in X_t$ such that $h^0(X_t, \OO_{X_t}(2p_t))=2$ and $h^0(X_t, \eta_t((g-1)p_t))\geq 1$. Indeed, assuming this not to be true, applying a limit linear series argument we find non-zero sections
$$\sigma_{C'}\in H^0\bigl(C', \eta_{C'}\otimes \OO_{C'}((g-1)p')\bigr) \ \mbox{ and }\ \sigma_{E}\in H^0\bigl(E, \eta_{E}\otimes \OO_E((g-1)p)\bigr),$$
such that $\mbox{ord}_{p'}(\sigma_{C'})+\mbox{ord}_{p'}(\sigma_{E})\geq g-1$. Our assumption implies $\mbox{ord}_{p'}(\sigma_E)\leq g-2$, therefore $\mbox{ord}_{p'}(\sigma_{C'})\geq 1$, that is, $H^0\bigl(C', \eta_{C'}\otimes \OO_{C'}((g-2)p')\bigr)\neq 0$. This is a contradiction and completes the proof. \hfill $\Box$

\subsection{The syzygies of the twisted paracanonical module.}
We now show that for a general point $[C, \eta]\in \cR_{g, \ell}$, the module $\Gamma_C(\xi, K_C\otimes \eta)$ has a pure resolution for a general choice of $\xi\in \mbox{Pic}^0(C)$. The argument relies on specialization to bielliptic curves.

Suppose $E$ is an elliptic curve, $C$  a curve of genus $g$ and $f:C\rightarrow E$ a double cover ramified along the divisor $R\in C_{2g-2}$  and branched along the divisor $B\in E_{2g-2}$. Let $\delta\in \mbox{Pic}^{g-1}(E)$ denote the line bundle determining $f$, that is, $\delta^{\otimes 2}=\OO_E(B)$. In particular,
$$f_*\OO_C=\OO_E\oplus \delta^{\vee}, \ \ f^*(\delta)=\OO_C(R).$$

\begin{proposition}
Let $\epsilon \in \mathrm{Pic}^0(E)[\ell]$ an $\ell$-torsion point and $\eta:=f^*(\epsilon)\in \mathrm{Pic}^0(C)[\ell]$. Then
$$M_{K_C\otimes \eta}=f^*M_{\delta\otimes \epsilon}.$$
\end{proposition}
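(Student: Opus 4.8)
The plan is to reduce the claimed identity $M_{K_C\otimes\eta}=f^*M_{\delta\otimes\epsilon}$ to two observations: that the line bundle $K_C\otimes\eta$ is itself a pullback from $E$, and that every global section of it descends from $E$. Granting these, the identification of kernel bundles is formal.

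First I would identify $K_C\otimes\eta$ with a pullback. Since $E$ is elliptic, $K_E=\OO_E$, and the ramification formula for the double cover $f$ combined with the relation $f^*\delta=\OO_C(R)$ recorded above gives $K_C=f^*(K_E\otimes\delta)=f^*\delta$. Writing $N:=\delta\otimes\epsilon\in\Pic^{g-1}(E)$, this yields $K_C\otimes\eta=f^*\delta\otimes f^*\epsilon=f^*N$, so it remains to prove $M_{f^*N}=f^*M_N$.

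Next I would compare the spaces of sections. The projection formula together with $f_*\OO_C=\OO_E\oplus\delta^\vee$ gives $f_*f^*N\cong N\oplus(N\otimes\delta^\vee)=N\oplus\epsilon$, hence $H^0(C,f^*N)\cong H^0(E,N)\oplus H^0(E,\epsilon)$. Because $\epsilon$ is a nontrivial torsion bundle on $E$, $H^0(E,\epsilon)=0$, so the pullback map $f^*\colon H^0(E,N)\to H^0(C,f^*N)$ is an isomorphism: it is injective since $f$ is finite and surjective, and the dimensions agree. Under this identification, the evaluation morphism $H^0(C,f^*N)\otimes\OO_C\to f^*N$ is precisely the pullback along $f$ of the evaluation morphism $H^0(E,N)\otimes\OO_E\to N$.

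Finally, $f$ is a finite morphism of smooth curves, hence flat, so applying $f^*$ to the defining sequence $0\to M_N\to H^0(E,N)\otimes\OO_E\to N\to 0$ produces an exact sequence with middle term $H^0(C,f^*N)\otimes\OO_C$ and surjection the evaluation map of $f^*N$; by uniqueness of the kernel of the evaluation map this identifies $f^*M_N$ with $M_{f^*N}=M_{K_C\otimes\eta}$, as desired. I do not anticipate a genuine obstacle; the one point deserving care is the vanishing $H^0(E,\epsilon)=0$, which is exactly where the hypothesis $\epsilon\ne\OO_E$ enters — for $\epsilon=\OO_E$ the two bundles would even have different ranks.
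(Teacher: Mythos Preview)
Your proof is correct and follows essentially the same argument as the paper: identify $K_C\otimes\eta$ with $f^*(\delta\otimes\epsilon)$ via adjunction, use the projection formula together with $H^0(E,\epsilon)=0$ to see that $f^*$ induces an isomorphism on global sections, and then pull back the defining sequence of $M_{\delta\otimes\epsilon}$. You give slightly more detail (flatness of $f$, the role of the hypothesis $\epsilon\neq\OO_E$), but the approach is the same.
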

\begin{proof} From adjunction $K_C=f^*(\delta)$, therefore from the push-pull formula
$$H^0(C, K_C\otimes \eta)=f^*H^0(E, \delta\otimes \epsilon)\oplus f^* H^0(E, \epsilon)\cong H^0(E, \delta\otimes \epsilon).$$
In particular, pulling-back via $f$ the exact sequence defining the kernel bundle on $E$
$$0\longrightarrow M_{\delta\otimes \epsilon}\longrightarrow H^0(E, \delta\otimes \epsilon)\longrightarrow \delta\otimes \epsilon\longrightarrow 0,$$
we retrieve the sequence defining the kernel bundle on $C$,  that is, $M_{K_C\otimes \eta}=f^*(M_{\delta\otimes \epsilon})$.
\end{proof}

\begin{lemma}\label{stable_push}
The push-forward $f_*\xi$ of a general line bundle $\xi\in \mathrm{Pic}^0(C)$ is stable (respectively semistable) if $g$ is even (respectively odd).
\end{lemma}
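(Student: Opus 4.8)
The plan is to identify the numerical invariants of $f_*\xi$, reduce the assertion to semistability, and then bound the dimension of the locus of $\xi$ for which semistability fails.

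Since $f$ is finite we have $H^\bullet(C,\xi)\cong H^\bullet(E,f_*\xi)$, so $\chi(E,f_*\xi)=\chi(C,\xi)=\deg\xi+1-g=1-g$; as $E$ is an elliptic curve this forces $\deg(f_*\xi)=1-g$, while $\mathrm{rk}(f_*\xi)=\deg f=2$. Hence $f_*\xi$ has slope $\mu=(1-g)/2$. When $g$ is even this is not an integer, so for a rank-two bundle of this slope semistability and stability are equivalent; it therefore suffices to prove that $f_*\xi$ is semistable for general $\xi$, and this will give stability for free in the even case.

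Suppose now that $f_*\xi$ is not semistable, and let $L\subset f_*\xi$ be its maximal destabilizing subsheaf, which for rank two is a line subbundle with $n:=\deg L>\mu=(1-g)/2$. By the adjunction $\Hom_E(L,f_*\xi)\cong\Hom_C(f^*L,\xi)$, the inclusion $L\hookrightarrow f_*\xi$ corresponds to a nonzero --- hence injective --- morphism of line bundles $f^*L\hookrightarrow\xi$ on $C$, so $\xi\cong f^*L\otimes\OO_C(D)$ for an effective divisor $D$ of degree $\deg\xi-2n=-2n$; in particular $n\le0$. Thus the locus of ``bad'' $\xi$ is contained in the union, over the finitely many integers $n$ with $(1-g)/2<n\le0$, of the images of the morphisms $\Psi_n\colon\mathrm{Pic}^n(E)\times C_{-2n}\to\mathrm{Pic}^0(C)$, $(L,D)\mapsto f^*L\otimes\OO_C(D)$.

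The dimension count then goes as follows. If $g$ is even, $(1-g)/2<n$ forces $n\ge1-g/2$, whence $0\le-2n\le g-2$; if $g$ is odd, $(1-g)/2$ is an integer and $(1-g)/2<n$ forces $n\ge(3-g)/2$, whence $0\le-2n\le g-3$. In either case $-2n<g$; since $\dim C_{-2n}=-2n$, the image of $\Psi_n$ has dimension at most $1+(-2n)\le g-1<g=\dim\mathrm{Pic}^0(C)$. Hence the bad locus lies in a proper closed subvariety of $\mathrm{Pic}^0(C)$, and a general $\xi\in\mathrm{Pic}^0(C)$ has $f_*\xi$ semistable --- and stable when $g$ is even. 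The delicate point is exactly this parity accounting: in odd genus, failure of \emph{stability} only forces the non-strict inequality $\deg L\ge\mu$, which permits $-2n=g-1$ and makes the corresponding image (a one-parameter family of translates of the Abel--Jacobi image of $C_{g-1}$) potentially all of $\mathrm{Pic}^0(C)$ --- this is precisely why only semistability is claimed in the odd case, whereas the strict inequality required for failure of \emph{semistability} saves the extra dimension. Beyond keeping this bookkeeping straight there is no real obstacle; once the adjunction is invoked the argument is a routine estimate.
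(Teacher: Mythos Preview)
Your proof is correct and follows essentially the same line as the paper's: identify the degree of $f_*\xi$, translate a destabilizing line subbundle $L\subset f_*\xi$ via adjunction into an expression $\xi\cong f^*L(D)$ with $D$ effective, and count parameters. The paper computes $\deg f_*\xi$ via $\det f_*\xi=\mathrm{Nm}_f(\xi)\otimes\delta^{\vee}$ rather than via Euler characteristic, treats only the even case explicitly, and is less careful about the integrality bookkeeping than you are, but the argument is the same.
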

\begin{proof} We treat only the case of even genus $g=2i+2$, the odd genus case being similar. The push-forward $f_* \xi$ is a rank two vector bundle on $E$ with $\mbox{det } f_*(\xi)=\mbox{Nm}_f(\xi)\otimes \delta^{\vee}$, in particular $\mbox{deg}(f_*\xi)=1-g$. Assume that $f_*\xi$ is not semistable. Then there exists a line subbundle $M\hookrightarrow f_*\xi$ with $\mbox{deg}(M)\geq \mbox{deg}(f_*\xi)/2$, that is, $\mbox{deg}(M)\geq -i$.
Then $H^0(C, \xi\otimes f^*M^{\vee})\neq 0$, hence we can write $\xi=f^*(M)(D)$, where $D$ is an effective divisor on $C$, with $\mbox{deg}(D)\leq 2i$. Counting parameters, line bundles on $C$ having this type depend on at most $2i+1=g-1$ parameters, hence they do not fill-up $\mbox{Pic}^0(C)$.
\end{proof}

Thus one obtains a rational map $f_*:\mbox{Pic}^0(C)\dashrightarrow \mathcal{U}_E(2, 1-g)$. By describing the differential of this map, it is easy to show that this map is dominant, see also \cite{B}.

We complete the proof of Theorem \ref{general bundles}. More precisely we prove the following:

\begin{theorem}\label{bielliptic}
Let $f:C\rightarrow E$ be a bielliptic curve and $\eta\in \mathrm{Pic}^0(C)[\ell]$ as above. Then
$$K_{\lfloor \frac{g-1}{2}\rfloor, 1}(C; \xi, K_C\otimes \eta)=0,$$
for a general $\xi\in \mathrm{Pic}^0(C)$. In particular, the resolution of $\Gamma_C(\xi, K_C\otimes \eta)$ is natural.
\end{theorem}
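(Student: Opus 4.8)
The plan is to reduce the statement on $C$ to an elementary statement on the elliptic curve $E$, using the two structural results just established. By the proposition, $M_{K_C\otimes \eta} = f^* M_{\delta\otimes \epsilon}$, where $M_{\delta\otimes\epsilon}$ is a \emph{rank one} kernel bundle on $E$: indeed $\delta\otimes\epsilon$ has degree $g-1$, so $h^0(E,\delta\otimes\epsilon) = g-1$ and $M_{\delta\otimes\epsilon}$ has rank $g-2$ and degree $-(g-1)$, which is \emph{not} a line bundle. Let me instead keep track of exterior powers. Setting $i := \lfloor \frac{g-1}{2}\rfloor$, I have
$$K_{i,1}(C;\xi, K_C\otimes\eta) = H^0\Bigl(C, \bigwedge^i M_{K_C\otimes\eta}\otimes K_C\otimes\eta\otimes\xi\Bigr) = H^0\Bigl(C, f^*\bigl(\bigwedge^i M_{\delta\otimes\epsilon}\otimes \delta\otimes\epsilon\bigr)\otimes \xi\Bigr),$$
using $K_C = f^*\delta$ and $\eta = f^*\epsilon$. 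By the projection formula this equals $H^0\bigl(E, \bigwedge^i M_{\delta\otimes\epsilon}\otimes\delta\otimes\epsilon\otimes f_*\xi\bigr)$.

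So the task becomes: show that for a general $\xi\in\mathrm{Pic}^0(C)$, the bundle $\bigwedge^i M_{\delta\otimes\epsilon}\otimes\delta\otimes\epsilon\otimes f_*\xi$ on $E$ has no sections. First I would compute its slope and check it is negative (or, in the borderline case, zero with the bundle not containing $\OO_E$), which handles the numerics coming from \eqref{differ}. Then the key point is stability: by Lemma \ref{stable_push}, $f_*\xi$ is stable for general $\xi$ when $g$ is even and semistable when $g$ is odd. Since $\bigwedge^i M_{\delta\otimes\epsilon}$ is a fixed bundle on the elliptic curve $E$, I would invoke the Atiyah classification — equivalently, the fact that semistable bundles on an elliptic curve are direct sums of stable bundles, and tensor products and exterior powers of semistable bundles are semistable (Atiyah, in char $0$). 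Hence $\bigwedge^i M_{\delta\otimes\epsilon}\otimes\delta\otimes\epsilon\otimes f_*\xi$ is semistable on $E$, and a semistable bundle on an elliptic curve with negative slope (resp. slope zero and no trivial summand) has no global sections. For the slope-zero case one must rule out a trivial summand, which for general $\xi$ follows because the trivial bundle occurs as a summand only on a proper closed subset of $\mathrm{Pic}^0(C)$ — here one uses the dominance of $f_*: \mathrm{Pic}^0(C)\dashrightarrow \mathcal{U}_E(2,1-g)$ noted just before the theorem, so a general $f_*\xi$ has no destabilizing or trivial sub-line-bundle of the relevant degree.

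Finally, the last sentence of the theorem — that the resolution of $\Gamma_C(\xi, K_C\otimes\eta)$ is natural — follows from Lemma \ref{natural}(1): naturality is equivalent to the vanishing of $K_{i,1}(C;\xi,K_C\otimes\eta)$ \emph{together with} $K_{i,1}(C;(\eta\otimes\xi)^\vee, K_C\otimes\eta)$, and since $(\eta\otimes\xi)^\vee = f^*(\epsilon^\vee)\otimes\xi^\vee$ is again $f^*$ of a torsion point tensored with a general degree-zero line bundle on $C$ (as $\xi$ ranges over a general point, so does $(\eta\otimes\xi)^\vee$, because $\epsilon^\vee$ is also $\ell$-torsion on $E$ and inversion is an automorphism of $\mathrm{Pic}^0(C)$), the same argument applies verbatim to the second group.

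I expect the main obstacle to be the borderline case where the slope of $\bigwedge^i M_{\delta\otimes\epsilon}\otimes\delta\otimes\epsilon\otimes f_*\xi$ is exactly zero (the "pure" case, occurring for one parity of $g$): here semistability alone does not force vanishing of $H^0$, and one must genuinely exploit the genericity of $\xi$ to exclude a trivial direct summand. Pinning down precisely which degree-zero sub-line-bundles can occur in $\bigwedge^i M_{\delta\otimes\epsilon}$ — i.e.\ controlling the finitely many "bad" line bundles $\zeta\in\mathrm{Pic}^0(E)$ for which $\mathrm{Hom}(\zeta, \bigwedge^i M_{\delta\otimes\epsilon})\ne 0$ — and then arguing that $\delta\otimes\epsilon\otimes f_*\xi$ generically avoids having any of their duals as a quotient line bundle, is the technical heart of the argument; everything else is slope bookkeeping and the cited stability input.
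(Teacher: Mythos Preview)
Your proposal is correct and follows essentially the same route as the paper: reduce to $E$ via $M_{K_C\otimes\eta}=f^*M_{\delta\otimes\epsilon}$ and the projection formula, then use semistability of $\bigwedge^i M_{\delta\otimes\epsilon}$ together with Lemma~\ref{stable_push} to conclude that the resulting bundle on $E$ is semistable of degree~$0$, and finally invoke genericity of $\xi$ (via dominance of $f_*$) to kill $H^0$. The paper compresses your anticipated ``main obstacle'' into a single sentence---a general $\xi$ yields a general point of $\mathcal U_E$, and on an elliptic curve there is a unique semistable bundle of given rank, degree zero, and nonzero $H^0$---so your more explicit plan of excluding finitely many bad sub-line-bundles is just an unpacking of that same step.
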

\begin{proof}
We treat only the case $g=2i+2$, the odd genus case being quite similar. Using identification (\ref{mrc3}) we write
$$K_{i, 1}(C; \xi, K_C\otimes \eta)=H^0\bigl(C, \bigwedge^i M_{K_C\otimes \eta}\otimes K_C\otimes \eta\otimes \xi\bigr)=H^0\bigl(E, \bigwedge^i M_{\delta\otimes \epsilon}\otimes \delta\otimes \epsilon\otimes f_*\xi\bigr).$$
It is well-known that the vector bundle $M_{\delta\otimes \epsilon}$ is stable, hence using also Lemma \ref{stable_push}, the vector bundle $\F:=\bigwedge^i M_{\delta\otimes \epsilon} \otimes \epsilon\otimes \delta\otimes f_*\xi$ is semistable. Futhermore, the choice of a general $\xi\in \mbox{Pic}^0(C)$ corresponds to the choice of a general $\F\in \mathcal{U}_E\bigl({g-2\choose i}, 0)$. Since on an elliptic curve, there exists precisely one semistable bundle of prescribed rank, degree zero and with a section, we find that $H^0(E, \F)=0$, which finishes the proof.
\end{proof}
Theorem \ref{bielliptic} admits the following reformulation in the case of even genus.
\begin{corollary}
For a general curve $[C, \eta]\in \cR_{g, \ell}$ of even genus, the vector bundle $\bigwedge^\frac{g-2}{2} M_{K_C\otimes \eta}$ admits a theta divisor.
\end{corollary}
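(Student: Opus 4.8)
The plan is to read off the corollary directly from the even-genus case of Theorem \ref{general bundles} (equivalently Theorem \ref{bielliptic}), translated through the identification (\ref{mrc3}). Recall the dictionary first: for a vector bundle $\mathcal{E}$ on $C$ with $\chi(\mathcal{E}\otimes\xi)=0$ for every $\xi\in\Pic^0(C)$ --- equivalently $\mathcal{E}$ of slope $g-1$ --- one says that $\mathcal{E}$ \emph{admits a theta divisor} if the cohomological support locus $\Theta_{\mathcal{E}}:=\{\xi\in\Pic^0(C):H^0(C,\mathcal{E}\otimes\xi)\neq 0\}$ is a proper subscheme of $\Pic^0(C)$; by the standard theory of determinants of cohomology over $\Pic^0(C)$, $\Theta_{\mathcal{E}}$ is the zero scheme of a section of a determinant-of-cohomology line bundle on $\Pic^0(C)$, so once it is proper it is automatically a genuine effective divisor. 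Now set $g=2i+2$, fix a general $[C,\eta]\in\cR_{g,\ell}$, write $L:=K_C\otimes\eta$, and put $\mathcal{E}:=\bigwedge^i M_L\otimes L$. Since $\operatorname{rk}M_L=g-2$ and $\deg M_L=-(2g-2)$, one gets $\mu\bigl(\bigwedge^i M_L\bigr)=\tfrac{i}{g-2}\cdot(-(2g-2))=-(g-1)$, hence $\mu(\mathcal{E})=g-1$ and $\chi(\mathcal{E}\otimes\xi)=0$ for all $\xi\in\Pic^0(C)$ (this is also (\ref{differ}) with $g=2i+2$).

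Next I would combine (\ref{mrc3}) with Theorem \ref{general bundles}. By (\ref{mrc3}), whenever $\xi\neq\OO_C$ and $\eta\otimes\xi\neq\OO_C$,
$$H^0(C,\mathcal{E}\otimes\xi)=H^0\bigl(C,\textstyle\bigwedge^i M_L\otimes L\otimes\xi\bigr)=K_{i,1}(C;\xi,K_C\otimes\eta).$$
On the other hand, Theorem \ref{general bundles} (proved by degenerating to bielliptic curves in Theorem \ref{bielliptic}, together with semicontinuity) asserts that for a general $[C,\eta]\in\cR_{g,\ell}$ and a general $\xi\in\Pic^0(C)$ the module $\Gamma_C(\xi,K_C\otimes\eta)$ has natural resolution, which by Lemma \ref{natural}(1) forces $K_{i,1}(C;\xi,K_C\otimes\eta)=0$ in even genus. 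Feeding this back into the display gives $H^0(C,\mathcal{E}\otimes\xi)=0$ for general $\xi$, i.e. $\Theta_{\mathcal{E}}\neq\Pic^0(C)$. Therefore $\mathcal{E}=\bigwedge^{\frac{g-2}{2}}M_{K_C\otimes\eta}\otimes(K_C\otimes\eta)$ admits a theta divisor; equivalently, after the harmless translation by the class $[K_C\otimes\eta]$ (which moves the support locus from $\Pic^0(C)$ to the component $\Pic^{2g-2}(C)$), the bundle $\bigwedge^{\frac{g-2}{2}}M_{K_C\otimes\eta}$ itself admits a theta divisor.

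I do not expect any genuine obstacle: the geometric content is entirely carried by Theorem \ref{general bundles}/\ref{bielliptic}, and everything else is a change of vocabulary. The only two points deserving a line of care are (i) the slope bookkeeping, so that the relevant determinant-of-cohomology sheaf on the Jacobian is a line bundle carrying a canonical section, which is what guarantees that a proper support locus has pure codimension one rather than higher; and (ii) the twist by $K_C\otimes\eta$ needed to pass between $\mathcal{E}$ and $\bigwedge^{\frac{g-2}{2}}M_{K_C\otimes\eta}$, which merely relocates the theta divisor inside a different component of $\Pic(C)$.
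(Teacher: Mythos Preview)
Your proof is correct and follows exactly the route the paper intends: the corollary is stated in the paper as a direct reformulation of Theorem \ref{bielliptic} (Theorem \ref{general bundles} in even genus), and you have unpacked precisely that reformulation via the identification (\ref{mrc3}) and the slope bookkeeping. The paper offers no separate argument beyond the sentence ``Theorem \ref{bielliptic} admits the following reformulation in the case of even genus'', so your write-up is essentially a faithful expansion of the omitted details.
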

Since $\mu\bigl(\bigwedge^{\frac{g-2}{2}} M_{K_C\otimes \eta}\bigr)=g-1\in \mathbb Z$ it makes sense to ask whether the vector bundle in question admits a theta divisor. Conjecture B is a more refined statement, predicting which line bundles $K_C\otimes \eta^{\otimes k}$ belong to the theta divisor of $\bigwedge^{\frac{g-2}{2}} M_{K_C\otimes \eta}$. Note that it is proved in \cite{FMP} that all powers $\bigwedge^i M_{K_C}$ admit a theta divisor for \emph{every} smooth curve $C$.

\section{Intersection theory on $\ssrr_{g, \ell}$}
The aim of this section is to describe the characteristic classes of tautological bundles on $\ssrr_{g, \ell}$ that are used to calculate the classes $[\zz_{g, \ell}]^{\mathrm{virt}}$, $[\ol{\cD}_{g, \ell}]^{\mathrm{virt}}$ and $[\ol{\cU}_{g, \ell}]$ respectively. For our purposes it will suffice to consider only level $\ell$ curves whose underlying stable model is irreducible. Let $\wt{\mathsf{R}}_{g, \ell}$ be the open substack of level $\ell$ curves whose underlying stable model is a $1$-nodal irreducible curves of arithmetic genus $g$. We denote
by $\mathsf{u}:\wt{\mathsf{X}}_{g, \ell}\rightarrow \wt{\mathsf{R}}_{g, \ell}$ the restriction of the universal level $\ell$ curve, by
$\P\in \sPic(\wt{\mathsf{X}}_{g, \ell})$ the tautological $\ell$th root bundle and by $\Phi:\P^{\otimes \ell}\rightarrow \OO_{\wt{\mathsf{X}}_{g, \ell}}$ the universal sheaf homomorphism. We shall use the Hodge bundle $\mathbb E:=\mathsf{u}_*(\omega_{\mathsf{u}})$ respectively the Prym-Hodge bundle $\mathbb E':=\mathsf{u}_*(\omega_{\mathsf{u}}\otimes \P)$. These are locally free sheaves over $\wt{\ssR}_{g, \ell}$ of ranks $g$ and $g-1$ respectively.

The following technical statement will be used to show that various tautological sheaves on $\wt{\ssR}_{g, \ell}$ are locally free.
\begin{proposition}\label{grauert1}
For a level curve $[X, \eta, \phi]\in \wt{\cR}_{g, \ell}$ and integers $b\geq 2$ and $0\leq j\leq \frac{g}{2}$, the following vanishing statements hold:
\begin{enumerate}
\item $H^1\bigl(X, \bigwedge^j M_{\omega_X}\otimes \omega_X^{\otimes b}\otimes \eta\bigr)=0.$
\item $H^1\bigl(X, \bigwedge^j M_{\omega_X\otimes \eta}\otimes \omega_X^{\otimes b}\otimes \eta^{\otimes (b-2)}\bigr)=0.$
\end{enumerate}
\end{proposition}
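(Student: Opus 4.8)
The plan is to prove both vanishing statements by a uniform strategy: reduce the cohomology on the (possibly nodal) curve $X$ to cohomology of a vector bundle that is a direct sum of line bundles of sufficiently high degree, where Serre duality and a degree count give the vanishing directly. Throughout, via Remark \ref{rem:forkernelbundles} we may replace $X$ by its semistable model $X'$ with the line bundle $\eta'$ (so that $H^0(X',\omega_{X'}\otimes\eta')\cong H^0(X,\omega_X\otimes\eta)$ and the kernel bundles agree), and by semicontinuity it suffices to treat the generic level curve in each boundary stratum of $\wt{\cR}_{g,\ell}$; by further specialization we may assume the underlying curve degenerates, so that the normalization $C'$ is a general smooth curve of genus $g-1$ (or lower) with $\eta'$ pulled back appropriately. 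The cleanest model is to specialize to a curve for which $M_{\omega_X}$ (resp. $M_{\omega_X\otimes\eta}$) is as positive as possible -- for instance a chain or a one-nodal irreducible curve whose normalization is hyperelliptic or bielliptic -- so that the exterior powers $\bigwedge^j M$ split, as in the proofs of Theorems \ref{hyper} and \ref{bielliptic}.

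The key steps, in order, are as follows. First, I would record the short exact sequence $0\to \bigwedge^j M\to \bigwedge^j H^0(F)\otimes\OO_X\to \bigwedge^{j-1}M\otimes F\to 0$ (with $F=\omega_X$ in (1), $F=\omega_X\otimes\eta$ in (2)) and twist it by the relevant line bundle $N$ (namely $N=\omega_X^{\otimes b}\otimes\eta$ in (1), $N=\omega_X^{\otimes b}\otimes\eta^{\otimes(b-2)}$ in (2)); then $H^1(\bigwedge^j M\otimes N)$ is controlled by $H^1$ of the trivial-summand term and by $H^0(\bigwedge^{j-1}M\otimes F\otimes N)$, reducing the $j$-th statement to the $(j-1)$-st statement together with the vanishing of $H^1(X,N)$ and of $H^1$ of lower twists. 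Second, I would dispose of the base case $j=0$: this is just $H^1(X,N)=0$, which for $b\ge 2$ follows because $N$ has degree $\ge 2(2g-2)$ in case (1) and degree $b(2g-2)$ in case (2) -- in any event a line bundle of degree $>2g-2$ on every component, hence with vanishing $H^1$ (on the nodal curve one checks this on the normalization using that $\eta'$ has degree $0$ on $C'$ and $1$ on the exceptional components, which only helps). Third, running the induction on $j$ up to $j\le g/2$, the only thing to verify is the middle vanishing $H^1(X,\bigwedge^j M\otimes N)$ for the trivial-bundle term contributes nothing (it is a sum of copies of $H^1(X,N)$) and the surjectivity/connecting-map analysis; here I would invoke the stability or semistability of the relevant kernel bundles ($M_{\delta\otimes\epsilon}$ stable on an elliptic quotient as in Theorem \ref{bielliptic}, or the hyperelliptic splitting $M_{K_C}^\vee\cong A^{\oplus(g-1)}$ as before Theorem \ref{hyper}) so that $\bigwedge^{j-1}M\otimes F\otimes N$ is a direct sum of line bundles each of degree exceeding $2g-2$, forcing $H^1=0$ and the needed surjection on global sections.

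The main obstacle I expect is the behaviour at the nodes of $X$ and on the exceptional chains of $X'$: one has to be careful that the kernel bundle $M_{\omega_X}$ (or $M_{\omega_X\otimes\eta}$) is locally free of the expected rank across the node and that its exterior powers, twisted by $N$, still have the degree positivity on every rational component $E_i$ of the chain. On the components $E_i$ with $\eta'_{E_i}=\OO$ the twist $N$ restricts to $\OO_{E_i}$ or $\OO_{E_i}(\text{small})$, so one needs the restriction of $\bigwedge^j M\otimes N$ to have nonnegative degree and no higher cohomology there, which is where the admissible-cover / limit-linear-series bookkeeping of Section 1 (the vanishing orders $a_i,\ell-a_i$ of $\Phi$) must be used to pin down $\eta'_{E_i}$ exactly. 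Once the restriction to each component is shown to be a sum of line bundles of degree $\ge -1$ with the gluing exact, a Mayer--Vietoris argument over the dual graph finishes the vanishing. The bound $j\le g/2$ is exactly what makes the degree count $\deg(\bigwedge^{j-1}M\otimes F\otimes N)/\rank > 2g-2$ work, so near $j=g/2$ the inequalities are tight and deserve the most care.
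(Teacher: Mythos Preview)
Your proposal has a fundamental gap. The proposition must hold for \emph{every} $[X,\eta,\phi]\in\wt{\cR}_{g,\ell}$, since its role is to feed into Grauert's theorem and make the sheaves $\mathbb{E}_{j,b}$, $\mathbb{F}_{j,b}$ locally free over all of $\wt{\ssR}_{g,\ell}$. Upper semicontinuity of $h^1$ runs the wrong way here: checking the vanishing at the generic point of a stratum, or at a hyperelliptic or bielliptic specialization where $\bigwedge^j M$ happens to split, only gives the vanishing on an open neighbourhood of that point, not on the whole stratum. So your plan to ``specialize to a curve for which $M_{\omega_X}$ is as positive as possible'' cannot establish the statement for a curve with arbitrary moduli. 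Relatedly, the inductive step does not close: from your exact sequence, $H^1(\bigwedge^j M\otimes N)=0$ is equivalent to the surjectivity of $\bigwedge^j H^0(F)\otimes H^0(N)\to H^0(\bigwedge^{j-1}M\otimes F\otimes N)$, which is \emph{not} the inductive hypothesis $H^1(\bigwedge^{j-1}M\otimes N)=0$; you propose to force this surjection via the split form of $\bigwedge^{j-1}M$, but that splitting is only available after the illegitimate specialization.

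The paper's proof is direct and works pointwise. On any smooth curve the kernel bundles $M_{\omega_X}$ and $M_{\omega_X\otimes\eta}$ are semistable, hence so are their exterior powers, and the slope $\mu\bigl(\bigwedge^j M_{\omega_X}\otimes\omega_X^{\otimes b}\otimes\eta\bigr)=b(2g-2)-2j>2g-2$ for $b\ge2$ and $j\le g/2$ kills $H^1$ at once. For a $1$-nodal curve in $\Delta_0'\cup\Delta_0''$ one tensors with the normalization sequence and invokes the semistability of $M_{K_C(p+q)\otimes\eta_C}$ on the genus-$(g-1)$ normalization (quoting \cite{FL}); for $\Delta_0^{\mathrm{ram}}$ one uses Mayer--Vietoris on $X=C'\cup_{\{p,q\}}E$, writes down explicitly the restrictions of the kernel bundle to each component, and again reduces to a slope estimate. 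No specialization in moduli and no induction on $j$ is involved.
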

\begin{proof} As pointed out in Section 1, such a question can be studied at the level of root curves. To ease notation we identify $\mathrm{nor}([X, \eta, \phi])\in \mathrm{Root}_{g, \ell}$  and $[X, \eta, \phi]$. If $X$ is smooth, since $\mu(\wedge^j M_{\omega_X}\otimes \omega_X^{\otimes b}\otimes \eta)=b(2g-2)-2j\geq 2g-1$, the statements are a consequence of the semistability of $M_{\omega_X}$ and that of $M_{\omega_X\otimes \eta}$ respectively.

Assume now that $X$ is $1$-nodal and $\eta\in \mbox{Pic}(X)$ is locally free, that is, $[X, \eta, \phi]\in \Delta_0^{'}\cup \Delta_0^{''}$. Let $\mathrm{nor}:C\rightarrow X$ be the normalization map, with $\mathrm{nor}^{-1}(X_{\mathrm{sing}})=\{p, q\}$ and set $\eta_C:=\mbox{nor}^*(\eta)$.
Via the exact sequence $0\rightarrow \OO_X\rightarrow \mathrm{nor}_*(\OO_C)\rightarrow \mathrm{nor}_*(\OO_C)/\OO_X\rightarrow 0$, to show that
(ii) holds it suffices to show that
$$H^1\Bigl(C, \bigwedge^j M_{K_C(p+q)\otimes \eta_C}\otimes K_C^{\otimes b}\otimes \OO_C(bp+bq)\otimes \eta_C\Bigr)=0,$$
and
$$H^1\Bigl(C, \bigwedge^j M_{K_C(p+q)\otimes \eta_C}\otimes K_C^{\otimes b}\otimes \OO_C((b-1)p+(b-1)q)\otimes \eta_C\Bigr)=0.$$
This again is a consequence of the stability of the vector bundle $M_{K_C(p+q)\otimes \eta_C}$, which in turn follows from \cite{FL} Proposition 2.4.

\vskip 3pt
Assume now that $[X, \eta, \phi]\in \Delta_{0}^{\mathrm{ram}}$ and $X=C\cup_{\{p, q\}} E$, where $E\cong \PP^1$ and $[C, p, q]\in \cM_{g-1, 2}$. Furthermore $\eta_E=\OO_E(1)$ and $\mbox{deg}(\eta_C)=-1$. The kernel bundles $M_{\omega_X}$ and $M_{\omega_X\otimes \eta}$  have the following restrictions to the components of $X$:
$$M_{\omega_X|C}=M_{K_C(p+q)} \mbox{ and }\ M_{\omega_X| E}=\OO_E^{\oplus (g-1)},$$
respectively
$$M_{\omega_X\otimes \eta|C}=M_{K_C(p+q)\otimes \eta_C} \ \mbox{ and }\ M_{\omega_X\otimes \eta| E}=\OO_E(-1)\oplus \OO_E^{\oplus (g-3)}.$$
Via the Mayer-Vietoris sequence on $X$, a sufficient condition for (i) to hold is given by the following vanishing statements:
$$H^1\Bigl(\bigwedge^j M_{\omega_X|C}\otimes K_C^{\otimes b}\bigl(b(p+q)\bigr)\otimes \eta_C\Bigr)=0, \ \ H^1\Bigl(\bigwedge^j M_{\omega_X|C}\otimes K_C^{\otimes b}\bigl((b-1)(p+q))\otimes \eta_C\Bigr)=0,$$
as well as $H^1\bigl(E, \bigwedge^j M_{\omega_X|E}\otimes \eta_E\bigr)=0$ (note that $\omega_{X|E}=\OO_E$).
The vanishing on $E$ is immediate, whereas that on $C$ follows again by using that $M_{K_C(p+q)}$ and $M_{K_C(p+q)\otimes \eta_C}$ are semistable
and computing the slopes of the corresponding vector bundles whose first cohomology group is supposed to vanish. Statement (ii) is entirely similar and we skip details.
\end{proof}

To be able to define tautological sheaves over $\wt{\ssR}_{g, \ell}$, we consider the global \emph{kernel bundle} defined via the exact sequence over the universal curve $\wt{\mathsf{X}}_{g, \ell}$
$$0\longrightarrow \cM_{\mathsf{u}}\longrightarrow \mathsf{u}^*(\mathbb E)\longrightarrow \omega_{\mathsf{u}}\longrightarrow 0,$$
respectively the global \emph{Prym kernel bundle}
$$0\longrightarrow \cM'_{\mathsf{u}}\longrightarrow \mathsf{u}^*(\mathbb E')\longrightarrow \omega_{\mathsf{u}}\otimes \P \longrightarrow 0.$$

\subsection{Tautological sheaves.} We introduce tautological vector bundles over $\wt{\ssR}_{g, \ell}$ whose fibres are various Koszul cohomology groups. For integers $0\leq j\leq \frac{g}{2}$ and $b\geq 2$, as well as for $(j, b)=(0, 1)$, we define the sheaves
$$\mathbb{E}_{j, b}:=\mathsf{u}_*\bigl(\bigwedge^j \cM_{\mathsf{u}}\otimes \omega_{\mathsf{u}}^{\otimes b}\otimes \P\bigr) \mbox{ and }
\mathbb{F}_{j, b}:=\mathsf{u}_*\bigl(\bigwedge^j \cM'_{\mathsf{u}}\otimes \omega_{\mathsf{u}}^{\otimes b}\otimes \P^{\otimes (b-2)}\bigr).$$
Grauert's theorem used via Proposition \ref{grauert1} imply that $\mathbb{E}_{j, b}$ and $\mathbb{F}_{j, b}$ are locally free. Note that $\mathbb E_{0, 1}=\mathbb E'$. One can now carry a Grothendieck-Riemann-Roch calculation over the universal level $\ell$ curve $\mathsf{u}:\wt{\mathsf{X}}_{g, \ell}\rightarrow \wt{\ssR}_{g, \ell}$ and prove the following:
\begin{proposition}\label{grra}
For each integer $b\geq 1$, the following formula holds in $\mathsf{Pic}(\wt{\mathsf{R}}_{g, \ell})$:
\begin{enumerate}
\item $c_1(\mathbb{E}_{0, b})=\lambda+{b\choose 2}\kappa_1-\frac{1}{2\ell}\sum_{a=1}^{\lfloor \frac{\ell}{2}\rfloor} a(\ell-a)\delta_0^{(a)}$.
\item $c_1(\mathbb{F}_{0, b})=\lambda+{b\choose 2}\kappa_1-\frac{(b-2)^2}{2\ell}\sum_{a=1}^{\lfloor \frac{\ell}{2}\rfloor} a(\ell-a)\delta_0^{(a)}$.
\end{enumerate}
\end{proposition}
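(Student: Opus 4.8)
The plan is to realize $\mathbb{E}_{0,b}$ and $\mathbb{F}_{0,b}$ as first direct images of line bundles along the universal semi-stable level $\ell$ curve $\mathsf{u}:\wt{\mathsf{X}}_{g,\ell}\to\wt{\mathsf{R}}_{g,\ell}$ and then to compute their first Chern classes by Grothendieck--Riemann--Roch. First I would record that for $b\ge 2$ the $j=0$ case of Proposition \ref{grauert1} gives $R^1\mathsf{u}_*(\omega_{\mathsf{u}}^{\otimes b}\otimes\P)=0$ and $R^1\mathsf{u}_*(\omega_{\mathsf{u}}^{\otimes b}\otimes\P^{\otimes(b-2)})=0$, so that these bundles equal the corresponding $K$-theoretic pushforwards; for $b=1$ the bundle $\mathbb{E}_{0,1}=\mathbb{E}'$ is already known to be locally free, and $R^1\mathsf{u}_*(\omega_{\mathsf{u}}\otimes\P^{\vee})=0$ follows from relative Serre duality, since a nontrivial $\ell$-torsion line bundle has no sections on any fibre. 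Hence it suffices to compute $c_1$ of the $K$-theoretic pushforward of a single line bundle $\mathcal{L}$ on $\wt{\mathsf{X}}_{g,\ell}$.

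For that I would invoke the Mumford form of Grothendieck--Riemann--Roch for families of nodal curves,
$$c_1\bigl(R\mathsf{u}_*\mathcal{L}\bigr)=\lambda+\frac{1}{2}\,\mathsf{u}_*\bigl(c_1(\mathcal{L})^2-c_1(\mathcal{L})\cdot c_1(\omega_{\mathsf{u}})\bigr),$$
in which the degree-one part of the Todd contribution, namely $\frac{1}{12}\mathsf{u}_*\bigl(c_1(\omega_{\mathsf{u}})^2+[\mathrm{Sing}\,\mathsf{u}]\bigr)$, has been identified with $\lambda=c_1(\mathsf{u}_*\omega_{\mathsf{u}})$ through Mumford's relation $12\lambda=\kappa_1+\delta$. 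The delicate point, and the one I expect to need the most care, is that $\mathsf{u}$ is the \emph{semi-stable} universal curve rather than the stable one: here the crepancy of $\wt{\mathsf{X}}_{g,\ell}=X'_{g,\ell}$ over the pulled-back universal stable curve $C_{g,\ell}$ (Remark \ref{rem:forkernelbundles}) guarantees that $\kappa_1=\mathsf{u}_*(c_1(\omega_{\mathsf{u}})^2)$ and $\lambda$ are still the pullbacks of the usual tautological classes from $\ol{\mathsf{M}}_g$, so that no boundary term appears beyond those already computed in Proposition \ref{features}.

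It then remains to expand. Writing $c_1(\omega_{\mathsf{u}}^{\otimes b}\otimes\P)=b\,c_1(\omega_{\mathsf{u}})+c_1(\P)$, respectively $c_1(\omega_{\mathsf{u}}^{\otimes b}\otimes\P^{\otimes(b-2)})=b\,c_1(\omega_{\mathsf{u}})+(b-2)\,c_1(\P)$, the quadratic expression above splits into the three pieces $\mathsf{u}_*(c_1(\omega_{\mathsf{u}})^2)=\kappa_1$, $\mathsf{u}_*\bigl(c_1(\P)\,c_1(\omega_{\mathsf{u}})\bigr)=0$ and $\mathsf{u}_*(c_1(\P)^2)=-\sum_{a=1}^{\lfloor\ell/2\rfloor}\frac{a(\ell-a)}{\ell}\delta_0^{(a)}$, the last two being items (2) and (3) of Proposition \ref{features}. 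Since the mixed term vanishes, the coefficient of $\kappa_1$ is $\frac{1}{2}(b^2-b)={b\choose 2}$ in both cases, and the coefficient of $\mathsf{u}_*(c_1(\P)^2)$ equals $\frac{1}{2}$ for $\mathbb{E}_{0,b}$ and $\frac{(b-2)^2}{2}$ for $\mathbb{F}_{0,b}$, one reads off exactly the two claimed identities. Apart from the semi-stable/crepancy bookkeeping, the argument is purely formal once Propositions \ref{grauert1} and \ref{features} are in hand.
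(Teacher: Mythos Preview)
Your proposal is correct and matches the paper's intended approach: the sentence immediately preceding Proposition \ref{grra} says it follows from a Grothendieck--Riemann--Roch computation on $\mathsf{u}:\wt{\mathsf{X}}_{g,\ell}\to\wt{\mathsf{R}}_{g,\ell}$, and the proof the paper does spell out for the entirely analogous Proposition \ref{gj} proceeds exactly as you do---expand $\mathrm{ch}(\mathcal{L})\cdot\mathrm{td}$, identify the Todd contribution with $\lambda$ via Mumford's relation $\mathsf{u}_*(c_1^2(\omega_{\mathsf{u}}))=12\lambda-\delta$, and evaluate the remaining quadratic terms using items (2) and (3) of Proposition \ref{features}. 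Your additional remarks on the $b=1$ case and on why the crepancy of $X'_{g,\ell}\to C_{g,\ell}$ keeps $\kappa_1$ and $\lambda$ equal to the pulled-back classes are a welcome bit of extra care that the paper leaves implicit.
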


\subsection{The divisor $\ol{\cU}_{g, \ell}$.} The divisor $\cU_{g, \ell}$ has a scheme theoretic characterization via condition (\ref{ugl}). We extend this description over the boundary of $\wt{\cR}_{g, \ell}$ and compute the class of the resulting degeneracy locus, thus completing the proof of Theorem \ref{mrc1}.

By taking exterior powers in the sequence defining the sheaf $\cM_{\mathsf{u}}$ and then using Proposition \ref{grauert1}, we find that for
each $1\leq j\leq i-1$, one has the following exact sequences, which can be used to compute inductively the Chern classes of the vector bundles $\mathbb{E}_{j, b}$, starting from level $j=0$:
\begin{equation}\label{ind1}
0\longrightarrow \mathbb{E}_{j, i+1-j}\longrightarrow \bigwedge^j \mathbb E\otimes \mathbb{E}_{0, i+1-j}\longrightarrow \mathbb{E}_{j-1, i+2-j}\longrightarrow 0.
\end{equation}

With these ingredients, we can prove Theorem \ref{mrc1} and calculate $[\ol{\cU}_{g, \ell}]$:

\noindent
\emph{Proof of Theorem \ref{mrc1}.} We consider the morphism $\chi:\bigwedge^i \mathbb E\otimes \mathbb{E}_{0, 1}\rightarrow \mathbb{E}_{i-1, 2}$ of vector bundles over the stack $\wt{\mathsf{R}}_{g, \ell}$, which at the level of fibres is given by the map
$$\chi([X, \eta, \phi]):\bigwedge^i H^0(X, \omega_X)\otimes H^0(X, \omega_X\otimes \eta)\rightarrow H^0\bigl(X, \bigwedge^{i-1} M_{\omega_X}\otimes \omega_X^{\otimes 2}\otimes \eta\bigr).$$
The intersection of the degeneracy locus of $\chi$ with $\cR_{g, \ell}$ is precisely the divisor $\cU_{g, \ell}$, therefore the difference
$c_1(\mathbb{E}_{i-1, 2}-\bigwedge^i \mathbb{E} \otimes \mathbb{E}_{0, 1})-[\ol{\cU}_{g, \ell}]\in \Pic(\wt{\mathsf{R}}_{g, \ell})$ is a (possibly empty) effective class supported only on the boundary classes of $\wt{\cR}_{g, \ell}$. In particular, the class $c_1(\mathbb{E}_{g-1, 2}-\bigwedge^i \mathbb E\otimes \mathbb{E}_{0, 1})$ is effective. In order to compute it, we use (\ref{ind1}) and write that:
$$c_1\bigl(\mathbb{E}_{i-1, 2}-\bigwedge^i \mathbb{E}\otimes \mathbb{E}_{0, 1}\bigr)=\sum_{b=0}^i (-1)^{b+1} c_1\bigl(\bigwedge^{i-b} \mathbb E\otimes \mathbb{E}_{0, b+1}\bigr)=$$
$$\sum_{b=0}^i (-1)^{b+1}\Bigl[{g\choose i-b}\Bigl(\lambda+{b+1\choose 2}\kappa_1-\sum_{a=1}^{\lfloor \frac{\ell}{2}\rfloor}\frac{a(\ell-a)}{2\ell}\delta_0^{(a)}\Bigr)+(2b+1)(g-1){g-1\choose i-b-1}\lambda\Bigr],$$
which after routine calculations leads to Theorem \ref{mrc1}. \hfill $\Box$

\subsection{The virtual divisor $\ol{\cD}_{g, \ell}$.} We prove Theorem \ref{conjB}. Since the proof resembles the previous calculation, we succintly explain the main points. We set $g:=2i+2$ and recall that via (\ref{dgl}) that $[C, \eta]\in \cD_{g, \ell}$ if and only if the map
$$\bigwedge^i H^0(K_C\otimes \eta)\otimes H^0(C, K_C\otimes \eta^{\vee})\rightarrow H^0\bigl(C, \bigwedge^{i-1} M_{K_C\otimes \eta}\otimes K_C^{\otimes 2}\bigr)$$
is not an isomorphism. Equivalently, $\cD_{g, \ell}$ is the restriction to $\cR_{g, \ell}$ of the degeneracy locus of the vector bundle morphism
$\bigwedge^i \mathbb{E}'\otimes \mathbb{F}_{0, 1}\rightarrow \mathbb{F}_{i-1, 2}$.

\noindent \emph{Proof of Theorem \ref{conjB}}. For each integer $1\leq j\leq i-1$ we have the exact sequence on $\wt{\ssR}_{g, \ell}$
$$0\longrightarrow \mathbb{F}_{i-j, j+1}\longrightarrow \bigwedge^{i-j} \mathbb E'\otimes \mathbb{F}_{0, j+1}\longrightarrow \mathbb{F}_{i-j-1, j+2}\longrightarrow 0, $$
thus one writes that
$$[\ol{\cD}_{g, \ell}]^{\mathrm{virt}}=\sum_{j=0}^i (-1)^{j+1} c_1(\bigwedge ^{i-j} \mathbb E'\otimes \mathbb{F}_{0, j+1})=$$
$$\sum_{j=0}^i (-1)^{j+1}\Bigl[(g-1)(2j+1){g-2\choose i-j-1}c_1(\mathbb E')+{g-1\choose i-j}c_1(\mathbb{F}_{0, j+1})\Bigr].$$
Using Proposition \ref{grra} and that $\mathbb E^{'}=\G_{0, 1}$, we finish the proof after some calculations.
\hfill $\Box$

\subsection{The Prym-Green divisor.}
Recall that $\mathbb{E}'$ is the Prym-Hodge bundle with fibres
$\mathbb{E}'([X, \eta, \phi])=H^0(X, \omega_X\otimes \eta)$.
For $b\geq 1$, we set $\mathbb{H}_{0, b}:=\mbox{Sym}^b \mathbb{E}'$. Then for $a\geq 1$, we define inductively the sheaves $\mathbb{H}_{a, b}$ via the
following exact sequences over $\wt{\mathsf{R}}_{g, \ell}$:
\begin{equation}\label{hi}
0\longrightarrow \mathbb{H}_{a, b}\longrightarrow \bigwedge^a \mathbb{E}'\otimes \mathbb{H}_{0, b}\longrightarrow \mathbb{H}_{a-1, b+1}\longrightarrow 0.
\end{equation}
Note that $\mathbb{H}_{a, b}$ is locally free and its fibre over a point $[X, \eta, \phi]$ inducing a paracanonical map $\phi_{L}:X\rightarrow \PP^{g-2}$, where $L:=\omega_X\otimes \eta$  is canonically identified with the space $H^0(\PP^{g-2}, \Omega_{\PP^{g-2}}^1(a+b))$.

\vskip 3pt
For each $b\geq 1$, we also define the sheaf $\mathbb{G}_{0, b}:=\mathsf{u}_*(\omega_{\mathsf{u}}^{\otimes b}\otimes \P^{\otimes b})$. Note that $\mathbb{G}_{0, 1}=\mathbb{E}'$ and there exist sheaf homomorphisms $\varphi_{0, b}:\mathbb{H}_{0, b}\rightarrow \mathbb{G}_{0, b}$, which fibrewise over $[X, \eta, \phi]\in \rr_{g, \ell}$ correspond to the multiplication maps of global sections
$$\mbox{Sym}^b H^0(X, \omega_X\otimes \eta)\rightarrow H^0(X, \omega_X^{\otimes b}\otimes \eta^{\otimes b}).$$
Inductively, for each $a\geq 1$, we define sheaves $\mathbb{G}_{a, b}$ over $\wt{\mathsf{R}}_{g, \ell}$ via the exact sequences
\begin{equation}\label{gi}
0\longrightarrow \mathbb{G}_{a, b}\rightarrow \bigwedge^a \mathbb{E}'\otimes \mathbb{G}_{0, b}\rightarrow \mathbb{G}_{a-1, b+1}\longrightarrow 0,
\end{equation}
where the right exactness of the sequence (\ref{gi}) is to be soon justified. Inductively, one also constructs sheaf homomorphisms
$$\varphi_{a, b}:\mathbb{H}_{a, b}\rightarrow \mathbb{G}_{a, b},$$ fibrewise given by restriction of twisted forms on projective space
$$\varphi_{a, b}([X, \eta, \phi]): H^0(\PP^{g-2}, \bigwedge^a M_{\PP^{g-2}}(b))\rightarrow H^0(X, \bigwedge^a M_{L}\otimes L^{\otimes b}).$$
The right exactness of the sequence (\ref{gi}) is established via Grauert's theorem by the following:
\begin{proposition}
Let $[X, \eta, \beta]\in \wt{\cR}_{g, \ell}$ a twisted level curve. Then for all integers $a\geq 0$, $b\geq 2$, the vanishing
$H^1\bigl(X, \bigwedge^a M_{L}\otimes L^{\otimes b}\bigr)=0$ holds.
\end{proposition}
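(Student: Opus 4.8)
The plan is to mimic the proof of Proposition~\ref{grauert1} almost verbatim, the only genuine novelty being that here the twisting sheaf is $L^{\otimes b}=(\omega_X\otimes\eta)^{\otimes b}$ rather than $\omega_X^{\otimes b}\otimes\eta^{\otimes(b-2)}$, and that $a$ is allowed to be arbitrary. By Remark~\ref{rem:forkernelbundles} it suffices to work with the quasi-stable $\ell$th root $[X,\eta,\phi]$, on which $L=\omega_X\otimes\eta$ is an honest line bundle with kernel bundle $M_L$; on the smooth locus $\deg L=2g-2$, $h^0(L)=g-1$, so $\rank M_L=g-2$ and, from the defining sequence, $\det M_L\cong L^{\vee}$. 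Since $\bigwedge^a M_L=0$ once $a>\rank M_L$, the assertion is vacuous for $a\geq g-1$, which is why it can be stated for all $a\geq0$. I would then split into the four strata of $\wt{\cR}_{g,\ell}$: $X$ smooth, $[X,\eta,\phi]\in\Delta_0'\cup\Delta_0''$, and $[X,\eta,\phi]\in\Delta_0^{\mathrm{ram}}$.

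When $X$ is smooth, $M_L$ is semistable (exactly the input used in Proposition~\ref{grauert1}), so $\bigwedge^a M_L\otimes L^{\otimes b}$ is semistable of slope $b(2g-2)-a\frac{2g-2}{g-2}=(2g-2)\bigl(b-\frac{a}{g-2}\bigr)$. For $0\leq a\leq g-2$ and $b\geq2$ this is $\geq(b-1)(2g-2)\geq2g-2$, and whenever it is \emph{strictly} larger than $2g-2$ the vanishing $H^1=0$ follows from semistability (the Serre dual has negative slope). The unique equality case is $b=2$, $a=g-2$, and there $\bigwedge^{g-2}M_L\otimes L^{\otimes2}\cong\det M_L\otimes L^{\otimes2}\cong L=\omega_X\otimes\eta$, so $H^1\bigl(X,\bigwedge^{g-2}M_L\otimes L^{\otimes2}\bigr)\cong H^0(X,\eta^{\vee})^{\vee}=0$ because $\eta\not\cong\OO_X$. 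This borderline case is the one point I would treat with care, since there semistability by itself is not enough; everything hinges on $\eta$ being nontrivial.

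For the nodal strata I would run the reduction of Proposition~\ref{grauert1} verbatim. Let $\nu\colon C\to X$ be the normalization at the node: in the $\Delta_0'\cup\Delta_0''$ case $C$ is smooth of genus $g-1$ and one tensors $0\to\OO_X\to\nu_*\OO_C\to(\text{skyscraper at the node})\to0$ by $\bigwedge^a M_L\otimes L^{\otimes b}$; in the $\Delta_0^{\mathrm{ram}}$ case $X=C\cup_{\{p,q\}}E$ with $[C,p,q]\in\cM_{g-1,2}$ and $E\cong\PP^1$ exceptional, and one uses Mayer--Vietoris. Either way the problem reduces to the vanishing of $H^1$ on $C$ of twists of $\bigwedge^a M_{K_C(p+q)\otimes\eta_C}$ by line bundles of degree $\geq b(2g-3)$, plus (in the ramified case) the vanishing of $H^1\bigl(E,\bigwedge^a M_L|_E\otimes L^{\otimes b}|_E\bigr)$; the latter is immediate since $M_L|_E$ is a sum of copies of $\OO_E$ and one $\OO_E(-1)$, so after twisting by $L^{\otimes b}|_E=\OO_E(b)$ every summand has degree $\geq b-1\geq1$. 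On $C$ the bundle $M_{K_C(p+q)\otimes\eta_C}$ is semistable by \cite{FL}, Proposition~2.4 (again as in Proposition~\ref{grauert1}), hence so is each such twist, with slope $\geq2g-3>2g-4=2g_C-2$; therefore its $H^1$ vanishes. Note that on $C$ there is no borderline case: passing to the genus $g-1$ curve lowers the Serre-duality threshold to $2g_C-2$, which the slopes comfortably exceed. Combining the four strata completes the proof.
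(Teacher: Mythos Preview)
Your approach is essentially the one the paper has in mind: its proof is the single line ``follows closely \cite{FL} Proposition~3.2,'' and that result is proved exactly as Proposition~\ref{grauert1}, which you are mimicking. You are right that the only new feature compared with Proposition~\ref{grauert1} is that $a$ is unrestricted, and you correctly isolate and dispatch the smooth borderline case $a=g-2,\ b=2$ via $\bigwedge^{g-2}M_L\otimes L^{\otimes 2}\cong L$ and $H^1(X,L)\cong H^0(X,\eta^{\vee})^{\vee}=0$.

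Two small points deserve tightening. First, in the $\Delta_0'\cup\Delta_0''$ case the identification $\nu^*M_L\cong M_{K_C(p+q)\otimes\eta_C}$ is not correct: $h^0(X,L)=g-1$ while $h^0\bigl(C,K_C(p+q)\otimes\eta_C\bigr)=g$, so the two bundles have ranks $g-2$ and $g-1$. (The same shortcut appears in the paper's proof of Proposition~\ref{grauert1}; the honest fix is the exact sequence $0\to\nu^*M_L\to M_{\nu^*L}\to\OO_C\to 0$ and the induced filtration on exterior powers, after which semistability of $M_{\nu^*L}$ from \cite{FL} does the job.) By contrast your identification \emph{is} correct on the $\Delta_0^{\mathrm{ram}}$ stratum, since there $h^0(X,L)=h^0(C,L_{|C})=g-1$.

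Second, the sentence ``on $C$ there is no borderline case'' is too optimistic at the very top. Over $\Delta_0''$ one has $\eta_C\cong\OO_C$, and the $(-p-q)$--twisted condition needed for surjectivity in the normalization/Mayer--Vietoris step becomes, at $a=g-2,\ b=2$, the vanishing of $H^1(C,K_C)$, which of course fails. The clean fix is to treat $a=g-2$ (for every $b\ge2$) uniformly on $X$ before reducing to components, exactly as you did in the smooth case: $\bigwedge^{g-2}M_L\otimes L^{\otimes b}\cong L^{\otimes(b-1)}$, and $H^1(X,L^{\otimes(b-1)})=0$ since either $b\ge3$ (degree $>2g-2$) or $b=2$ and $H^0(X,\eta^{\vee})=0$. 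With $a\le g-3$ the slopes on $C$ genuinely exceed $2g_C-2$ and your argument goes through. These are cosmetic repairs; the strategy is the right one.
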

\begin{proof} It follows closely \cite{FL} Proposition 3.2, where the corresponding statement when $\ell=2$ is established.
\end{proof}
We now compute the Chern classes of the above defined tautological bundles (this matches
the main theorem of \cite{CGRR} for $s=b\ell$ via $\frac12B_2(\frac{s}{\ell})=\binom{b}{2}+1/6$;
we refer to Section 3.2, (44) for the explicit example in degree $1$).
\begin{proposition}\label{gj}
For $b\geq 1$ we have the following formula in $\Pic(\wt{\mathsf{R}}_g)$:
$$c_1(\mathbb{G}_{0, b})=\lambda+{b\choose 2}\kappa_1-\frac{b^2}{2}\sum_{a=1}^{\lfloor \frac{\ell}{2}\rfloor} \frac{a(\ell-a)}{\ell}\delta_0^{(a)}.$$
\end{proposition}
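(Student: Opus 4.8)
The plan is to obtain the formula from Grothendieck--Riemann--Roch for the universal curve $\mathsf{u}:\wt{\mathsf{X}}_{g,\ell}\to\wt{\mathsf{R}}_{g,\ell}$ applied to $\mathcal{L}:=\omega_{\mathsf{u}}^{\otimes b}\otimes\P^{\otimes b}$, in the style of Mumford's computation of the Hodge class for families of nodal curves, and then to read off the answer from the self-intersection relations of Proposition \ref{features}. (Alternatively the statement follows from the main theorem of \cite{CGRR} via the substitution indicated in the remark above; the direct argument is the one outlined here.)

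First I would reduce to $b\ge 2$: for $b=1$ one has $\mathbb{G}_{0,1}=\mathsf{u}_*(\omega_{\mathsf{u}}\otimes\P)=\mathbb{E}'=\mathbb{E}_{0,1}$, so the asserted formula is exactly the case $b=1$ of Proposition \ref{grra}(1). For $b\ge 2$ the preceding Proposition (taking $a=0$) gives $R^1\mathsf{u}_*\mathcal{L}=0$, hence $[\mathbb{G}_{0,b}]=[\mathsf{u}_!\mathcal{L}]$ in $K$-theory, and Grothendieck--Riemann--Roch for a family of nodal curves yields, in codimension one,
$$c_1(\mathbb{G}_{0,b})=\lambda+\tfrac{1}{2}\,\mathsf{u}_*\!\bigl(c_1(\mathcal{L})^2-c_1(\mathcal{L})\,c_1(\omega_{\mathsf{u}})\bigr),$$
where $\lambda=\tfrac{1}{12}\mathsf{u}_*\bigl(c_1(\omega_{\mathsf{u}})^2+[Z]\bigr)$ is the Hodge class, $Z$ the codimension-two singular locus of $\mathsf{u}$, and $\kappa_1=\mathsf{u}_*\bigl(c_1(\omega_{\mathsf{u}})^2\bigr)$. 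Here $\omega_{\mathsf{u}}$ is the pullback of the dualizing sheaf of the stable model (the stabilization $X'_{g,\ell}\to C_{g,\ell}$ is crepant), so that $\lambda$ and $\kappa_1$ coincide with the classes pulled back from $\ol{\mathsf{M}}_g$.

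Next I would substitute $c_1(\mathcal{L})=b\,c_1(\omega_{\mathsf{u}})+b\,c_1(\P)$, expand
$$c_1(\mathcal{L})^2-c_1(\mathcal{L})\,c_1(\omega_{\mathsf{u}})=(b^2-b)\,c_1(\omega_{\mathsf{u}})^2+(2b^2-b)\,c_1(\omega_{\mathsf{u}})\,c_1(\P)+b^2\,c_1(\P)^2,$$
push forward, and invoke Proposition \ref{features}, namely $\mathsf{u}_*\bigl(c_1(\P)\,c_1(\omega_{\mathsf{u}})\bigr)=0$ and $\mathsf{u}_*\bigl(c_1(\P)^2\bigr)=-\sum_{a=1}^{\lfloor\frac{\ell}{2}\rfloor}\frac{a(\ell-a)}{\ell}\delta_0^{(a)}$. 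The middle term vanishes, $\tfrac12(b^2-b)=\binom{b}{2}$, and one arrives at
$$c_1(\mathbb{G}_{0,b})=\lambda+\binom{b}{2}\kappa_1-\frac{b^2}{2}\sum_{a=1}^{\lfloor\frac{\ell}{2}\rfloor}\frac{a(\ell-a)}{\ell}\,\delta_0^{(a)},$$
which is the claim; for $b=1$ this reduces to $\lambda-\tfrac{1}{2\ell}\sum a(\ell-a)\delta_0^{(a)}=c_1(\mathbb{E}')$, consistent with the reduction above.

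The one genuinely delicate point — the "main obstacle" — is the bookkeeping forced by working with the semistable model $X'_{g,\ell}$ rather than the stable model: over a general point of $\Delta_0^{(a)}$ the chain $E_1\cup\cdots\cup E_{\ell-1}$ carries additional nodes which enter $[Z]$ and additional components on which $\omega_{\mathsf{u}}$ is trivial, so one must check that the GRR output for $\mathsf{u}$ still computes the intrinsic $\lambda$ and $\kappa_1$ and that the excess contributions of these chains are exactly those already recorded in Proposition \ref{features}. Once this identification is in place, the rest is the routine expansion above.
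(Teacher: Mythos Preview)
Your proposal is correct and follows essentially the same route as the paper: apply Grothendieck--Riemann--Roch to $\omega_{\mathsf{u}}^{\otimes b}\otimes \P^{\otimes b}$ on the universal curve, use the vanishing $R^1\mathsf{u}_*=0$, and evaluate the pushforwards via Mumford's formula together with Proposition~\ref{features}. Your explicit isolation of the crepancy/semistable bookkeeping and the separate treatment of $b=1$ via Proposition~\ref{grra}(1) are nice touches, though the GRR computation in fact runs uniformly for all $b\ge 1$.
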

\begin{proof} We apply Grothendieck-Riemann-Roch for the universal curve $\mathsf{u}:\wt{\mathsf{X}}_{g, \ell}\rightarrow \wt{\mathsf{M}}_g$ and the sheaf $\mathbb{G}_{0, b}$. Noting that $R^1 \mathsf{u}_*(\omega_{\mathsf{u}}^{\otimes b}\otimes \P^{\otimes b})=0$, we write:
$$\mathrm{ch}(\mathbb{G}_{0, b})=\mathsf{u}_*\Bigl[\Bigl(1+ b\ c_1(\omega_{\mathsf{u}}\otimes \P)+\frac{b^2}{2} c_1^2(\omega_{\mathsf{u}}\otimes \P)+\cdots\Bigr)\cdot\Bigl(1-\frac{c_1(\omega_{\mathsf{u}})}{2}+\frac{c_1^2(\omega_{\mathsf{u}})+[\mathrm{Sing}(\mathsf{u})]}{12}\Bigr)\Bigr],$$
where $\mathrm{Sing}(\mathsf{u})\subset \wt{X}_{g, \ell}$ denotes the codimension $2$ singular locus of the $\mathsf{u}$, and clearly $\mathsf{u}_*([\mathrm{Sing}(\mathsf{u})])=\mathsf{f}^*(\delta_0)$, where $\delta_0\in \Pic(\wt{\mathsf{M}}_g)$.
Using Mumford's formula \cite{HM} $\mathsf{u}_*(c_1^2(\omega_{\mathsf{u}}))=12\lambda-\delta$ as well as Proposition \ref{features}, we obtained the claimed formula by evaluating the push-forward under $\mathsf{u}$ of the quadratic terms.
\end{proof}
We now compute the virtual class of the Prym-Green divisor.

\noindent
\emph{Proof of Theorem \ref{prymgreenclass}.} We set $g=2i+6$ and observe that the locus $\cZ_{g, \ell}$ of smooth level curves
$[C, \eta]\in \cR_{g, \ell}$ is the degeneracy locus of the morphism $\varphi_{i, 2}: \mathbb{H}_{i, 2 |\ssR_{g, \ell}}\rightarrow \mathbb{G}_{i, 2 |\ssR_{g, \ell}}$. Whenever Conjecture A is true, the class $[\zz_{g, \ell}]^{\mathrm{virt}}$ is effective, and differs from the class of the closure $\zz_{g, \ell}$ by a (possibly empty) effective combination of boundary divisors.
We now compute $[\zz_{g, \ell}]^{\mathrm{virt}}:=c_1(\mathbb{G}_{i, 2}-\mathbb{H}_{i, 2})$ as follows:
$$c_1(\mathbb H_{i, 2})=\sum_{j=0}^i (-1)^j c_1\bigl(\bigwedge^{i-j} \mathbb{H}_{0, 1}\otimes \mathrm{Sym}^{j+2}(\mathbb{H}_{0, 1})\bigr),$$
and $$c_1(\mathbb G_{i, 2})=\sum_{j=0}^i (-1)^j {g-1\choose i-j}c_1(\mathbb{G}_{0,i+2})+\sum_{j=0}^i (-1)^j (g-1)(2j+3){g-2\choose i-j-1}c_1(\mathbb E').$$
Using Proposition \ref{gj} we can finish the proof. \hfill $\Box$

\begin{remark} In the course of the proof of Theorem \ref{koddim} it is important to note that for any $\ell\geq 2$ and $g\leq 23$, in order to establish that the class $K_{\rr_{g, \ell}}$ is effective (respectively big), it suffices to prove the seemingly weaker statement that the restriction $K_{\wt{\cR}_{g, \ell}}$ is effective (respectively big). Precisely, if $\cD$ is an effective divisor on $\cR_{g, \ell}$ such that
$$[\ol{\cD}]=s  \lambda-2(\delta_0^{'}+\delta_0^{''})-\sum_{a=1}^{\lfloor \frac{\ell}{2}\rfloor} b_0^{(a)}\delta_0^{(a)} -\sum_{i=1}^{\lfloor \frac{g}{2}\rfloor} (b_i\ \delta_i+b_{g-i}\ \delta_{g-i}+
b_{i: g-i} \delta_{i: g-i})\in \mathsf{Pic}(\ssrr_{g, \ell}), $$
with $s\leq 13$ and $b_0^{(a)}\geq \ell+1$ for $a=1, \ldots, \lfloor \frac{\ell}{2}\rfloor$, then $K_{\rr_{g, \ell}}-[\ol{\cD}]\in \mbox{Eff}(\ssrr_{g, \ell})$ is an effective class, that is $b_{g-1}\geq 3$ and $b_i, b_{i:g-i}\geq 2$, for all $i=1, \ldots, g-1$. The proof uses pencils of level curves on $K3$ surfaces and is similar to \cite{FL} Proposition 1.2.
For $1\leq i\leq \mbox{min}\{\frac{g}{2}, 11\}$, we fix a general  curve $[C_2, p]\in \cM_{g-i, 1}$, an $\ell$-torsion point $\eta_{C_2}\in \mbox{Pic}^0(C_2)$  and a moduli curve $B_i\subset \mm_{i, 1}$, induced by a Lefschetz pencil $\{(C_t, p)\}_{t\in \PP^1}$ of pointed curves on genus $i$ on a fixed $K3$ surface $S\subset \PP^i$, the marked point $p$ being one of the base points of the pencil. We construct three $1$-dimensional families filling-up the divisors $\Delta_{g-i}, \Delta_i$ and $\Delta_{i:g-i}$ as follows. Firstly, $A_{g-i}\subset \Delta_{g-i}$ consists of level curves $\bigl\{[C_t\cup_p C_2, \eta_{C_t}=\OO_{C_t}, \eta_{C_2}]\bigr\}_{t\in \PP^1}\subset \rr_{g, \ell}$. Then $A_i\subset \Delta_i$ parametrizes level curves $\bigl\{[C_t\cup _p C_2, \ \eta_{C_t}\in \ol{\mathrm{Pic}}^0(C_t)[\ell], \ \OO_{C_2}]\bigr\}_{t\in \PP^1}$. Finally,  $A_{i:g-i}\subset \Delta_{i:g-i}$ consists of level curves  $\bigl\{[C_t\cup _p C_2, \ \eta_{C_t}\in \ol{\mbox{Pic}}^0(C_t)[\ell], \ \eta_{C_2}]\bigr\}_{t\in \PP^1}$. For $i\neq 10$, the curves $A_i, A_{g-i}$ and $A_{i:g-i}$ fill-up the respective boundary divisors in $\rr_{g, \ell}$. By imposing the conditions $A_i\cdot \ol{\cD}\geq 0$, $A_{g-i}\cdot \ol{\cD}\geq 0$ and $A_{i:g-i}\cdot \ol{\cD}\geq 0$ and computing the actual intersection numbers via \cite{FP}, we obtain the desired bounds on the boundary coefficients of $[\ol{\cD}]$. The case $g=10$ is a little special but can be handled in a similar way, cf. \cite{FP} Theorem 1.1 (b).
\end{remark}

\section{Syzygy computations with nodal curves}

In this section we explain how to verify computationally Conjectures A and B for small $g$ and bounded level $\ell$ using nodal curves.

Let $C$ be a rational $g$-nodal curve with normalization $\mathrm{nor}: \PP^1 \to C$ and denote by $\{P_j, Q_j\}_{j=1}^g$ the preimages of the nodes of $C$.
A line bundle $L\in \mbox{Pic}^d(C)$ is given by an isomorphism $\mathrm{nor}^*(L)\isom \OO_{\PP^1}(d)$ and gluing data between residue class fields
$$a_j: \OO_{\PP^1}(d)  \tensor \kappa(P_j) {\isom} \OO_{\PP^1}(d)  \tensor \kappa(Q_j).$$ In particular, $\Pic^0(C) \isom \GG_m \times \ldots \times \GG_m$ is a $g$-dimensional torus.

Let $K[x_0,x_1]$ be the homogeneous coordinate ring of $\PP^1$, and let $z=\frac{x_1}{x_0}$ be the affine coordinate on the chart $U_0\isom \Aa^1$. We assume that all preimage points of the nodes are contained in $U_0$, say $P_j=(1:p_j)$ and $Q_j=(1:q_j)$. Then
$$
H^0(C, L) \isom  \{ f \in K[z]_{\le d}: a_j f(p_j)=  f(q_j) \mbox{ for } \ i=1, \ldots, g \}
$$
can be identified with  the space of polynomials $f$ of degree at most $d$ whose values in $p_j$ and $q_j $ differ by the factor $a_j \in K^*$.
In terms of coefficients of polynomials,  the space $H^0(C, L) \subset H^0(\PP^1,\OO_{\PP^1}(d))$ is the solution space of a homogeneous system of equations.

\begin{example} A basis of sections of the dualizing sheaf  $\omega_C$ is given by
$$\omega_j = \frac{dz}{(z-p_j)(z-q_j)}, \hbox{ for } j=1,\ldots,g,$$
and the canonical map is induced by
 $$
 \phi_{\omega_C}:\PP^1 \to \PP^{g-1}, \ \ z \mapsto \Bigl( \prod_{i \not=1}(z-p_i)(p-q_i): \ldots : \prod_{i \not=g}(z-p_i)(z-q_i)\Bigr).
 $$
Thus $\mathrm{nor}^*(\omega_C)^=\OO_{\PP^1}(2g-2)$ and the the canonical multipliers are $a^{\mathrm{can}}_j= \prod_{i\not= j} \frac{(q_j-p_i)(q_j-q_i)}{(p_j-p_i)(p_j-q_i)}$.
\end{example}

\begin{example} Suppose $A, B \in \Pic(C)$ correspond to the pairs $\bigl(\OO_{\PP^1}(d),(a_1,\ldots,a_g)\bigr)$ and $\bigl(\OO_{\PP^1}(e),(b_1,\ldots,b_g)\bigr)$ respectively, then
$A \tensor B$ is given by $\bigl(\OO_{\PP^1}(d+e),(a_1b_1,\ldots,a_gb_g)\bigr)$.
\end{example}

\begin{example} Let $\eta \in \Pic^0(C)$ be a line bundle corresponding to $(\OO_{\PP^1},(\zeta_1,\ldots,\zeta_g))$.
Then $\eta$ is a torsion bundle, if and only if all multipliers $\zeta_j$ are roots of unity. In particular, one obtains a level $\ell$ paracanonical curve by altering some or all of the
multipliers $a_j^{\mathrm{can}}$ by a primitive $\ell$th root of unity.
\end{example} \medskip

\noindent
We verify the Prym-Green Conjecture following the following steps:
\begin{enumerate}

\item If $\ell=2$, we take $r=1$ and choose a prime $p$ of moderate size, for instance $10^4 < p <3\cdot 10^4$.  In case $ \ell>2$ we choose
an integer $r$ and a prime $p$ such that $r$ represents a primitive  $\ell$th root of unity in $K=\FF_p$. So $p$ is one of prime factors of $r^\ell-1$.

\item Randomly pick $2g$ points $P_1, Q_1, \ldots, P_g, Q_g\in \PP^1(K)$ and compute the canonical multipliers $a_j^{\mathrm{can}}$ for $j=1, \ldots, g$.
\item  Alter all (or some) multiplier $a_j^\eta= r a_j^{\mathrm{can}}$ and compute a basis $f_0,\ldots,f_{g-2}$ of the Prym canonical system $H^0(C,\omega_C \tensor \eta) \subset H^0(\PP^1, \OO_{\PP^1}(2g-2))$. By Riemann-Roch, this space is $(g-1)$-dimensional.

\item Compute the kernel $I_C$ of the map $K[y_0,\ldots,y_{g-2}] \to K[x_0,x_1]$ given by $y_j \mapsto f_j$ and its free resolution up to the appropriate step.
\end{enumerate}

A {\it Macaulay2} code which does this job can be found in the package \emph{\href{http://www.math.uni-sb.de/ag/schreyer/home/computeralgebra.htm}{NodalCurves.m2 }} available at  \emph{\href{http://www.math.uni-sb.de/ag/schreyer/home/computeralgebra.htm}{http://www.math.uni-sb.de/ag/schreyer/home/computeralgebra.htm}}.

\begin{proposition}\label{pgverif}
The Prym-Green Conjecture holds for all even genera with $g\le 18$ and $\ell\leq 5$ over a field $K$  of characteristic $0$ with the possible exception of the cases $(g, \ell)=(8,2)$ or $(16,2)$.
\end{proposition}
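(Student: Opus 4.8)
The plan is to reduce the assertion, for each admissible pair $(g,\ell)$, to the single Koszul vanishing $K_{\frac{g}{2}-2,1}(C,K_C\otimes\eta)=0$ for \emph{one} explicitly chosen level $\ell$ curve $[C,\eta]$, and to produce that curve as a $g$-nodal rational curve over a finite field. The mechanism is upper semicontinuity of graded Betti numbers in flat families, together with the fact that $\cR_{g,\ell}$ is irreducible: the locus inside $\rr_{g,\ell}$ at which the paracanonical module $\Gamma_C(\OO_C,K_C\otimes\eta)$ fails to have a natural resolution is closed, so to prove the Prym-Green Conjecture for given $(g,\ell)$ it suffices to exhibit a single point of $\rr_{g,\ell}$ — possibly on the boundary — where the resolution is natural. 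By the equivalence recorded in Conjecture A, for even genus this is the same as the vanishing of $K_{\frac{g}{2}-2,1}(C,K_C\otimes\eta)$, so a single such entry being zero for \emph{some} level curve is in fact equivalent to the conjecture.

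For the test point I would take a $g$-nodal rational curve $C$, with normalization $\mathrm{nor}:\PP^1\to C$ gluing $2g$ general points $P_j,Q_j$ in pairs, equipped with a torsion line bundle $\eta$ of exact order $\ell$ obtained by multiplying the canonical multipliers $a^{\mathrm{can}}_j$ of Section 4 by randomly chosen powers of a fixed primitive $\ell$th root of unity. Such a $[C,\eta]$ lies in a boundary stratum of $\rr_{g,\ell}$ lying over the stratum of $g$-nodal rational curves in $\mm_g$, and for a general such configuration $L:=\omega_C\otimes\eta$ is very ample while $H^1(C,L^{\otimes q})=0$ for every $q\geq 1$; hence the paracanonical ring of $[C,\eta]$ is the flat specialization of the paracanonical rings of nearby \emph{smooth} level $\ell$ curves, and a Betti-number computation carried out on $C$ controls the general smooth level curve.

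Concretely, for each even $g$ with $6\le g\le 18$ and each $\ell\in\{2,3,4,5\}$ with $(g,\ell)\notin\{(8,2),(16,2)\}$, I would: choose a prime $p$ of moderate size and an integer $r$ of multiplicative order $\ell$ in $\FF_p$ (with $r=-1$ when $\ell=2$); pick $2g$ random points $P_j,Q_j\in\PP^1(\FF_p)$ and form the $a^{\mathrm{can}}_j$; twist them by powers of $r$ to obtain $\eta\in\mathrm{Pic}^0(C)[\ell]$ of exact order $\ell$; solve the homogeneous linear system imposed by the multipliers to obtain a basis $f_0,\dots,f_{g-2}$ of $H^0(C,L)\subset H^0(\PP^1,\OO_{\PP^1}(2g-2))$; and compute the ideal $I_C=\ker\bigl(K[y_0,\dots,y_{g-2}]\to K[x_0,x_1],\ y_j\mapsto f_j\bigr)$ together with enough terms of its minimal free resolution to read off $K_{\frac{g}{2}-2,1}(C,L)$. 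The content of the proposition is that the \emph{NodalCurves.m2} computation returns the value $0$ for this entry in every listed case — so the resolution is natural and the Prym-Green Conjecture holds — while for $(g,\ell)=(8,2)$ and $(16,2)$ an extra syzygy appears, which is why those cases are excepted.

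It remains to pass from $\FF_p$ to characteristic $0$, which I would do by a spreading-out argument: $\cR_{g,\ell}$ and its compactification are defined and irreducible over $\ZZ[1/N]$ for a suitable $N$, the failure locus $\cZ_{g,\ell}$ is a closed substack of the ($\ZZ[1/N]$-flat) total space, and the point constructed above is an $\FF_p$-point lying outside $\cZ_{g,\ell}$; since every point of the irreducible total space is a specialization of its generic point and $\cZ_{g,\ell}$ is closed, the generic point — which coincides with the generic point of the characteristic-$0$ fiber — cannot lie in $\cZ_{g,\ell}$, so the general level $\ell$ curve over $\overline{\QQ}$, hence over any field of characteristic $0$, has natural resolution. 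The main obstacle I anticipate is computational cost rather than any conceptual point: the syzygy computation over $\FF_p$ grows very rapidly with $g$ (which is why the range stops at $g=18$, and why the excerpt estimates a $g=24$ run at thousands of years), and in each run one must also check the open conditions used above — very ampleness of $L$, the vanishing of the relevant $H^1$'s, and that $\eta$ has exact order $\ell$ — re-randomizing the configuration if they fail.
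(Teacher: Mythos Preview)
Your proposal is correct and follows essentially the same approach as the paper: compute the relevant Koszul group for a $g$-nodal rational level curve over a finite field using the \emph{NodalCurves.m2} package, then use upper semicontinuity of Betti numbers twice (once to lift from $\FF_p$ to $\QQ$, once to pass from the boundary point to the general smooth level curve). The paper's version of the characteristic-$0$ passage is phrased slightly differently---viewing the $\FF_p$ example as the reduction mod $p$ of an example defined over $\ZZ$---but this is the same content as your spreading-out argument.
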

\begin{proof} By the {\it Macaulay2} computation documented above, the statement holds true for some $g$-nodal rational curve defined over a finite field $\FF_p$.
This computation can be viewed as the reduction mod $p$ of the same computation of an example defined over $\mathbb Z$. By semicontinuity of Betti numbers in families, then there exists a $g$-nodal rational curve defined over $\QQ$ with a pure resolution. Applying again semicontinuity, a general pair $[C,\eta]\in \cR_{g, \ell}$ has a pure resolution of its paracanonical embedding.
\end{proof}

In more concrete terms, Proposition \ref{pgverif} says that a general level $\ell$ paracanonical curve of genus $10$ has the following syzygies
\begin{verbatim}
        0  1  2   3   4   5  6  7
 total: 1 18 42 126 210 162 63 10
     0: 1  .  .   .   .   .  .  .
     1: . 18 42   .   .   .  .  .
     2: .  .  . 126 210 162 63 10
\end{verbatim}
for all $2 \le \ell \le 5$. The relevant computation in this example takes about  $1.80$ seconds. Using $2$ minutes of cpu we can extend this result for  $g=10$  to all levels $\ell \le 30$.

For larger genus the approach has to to modified to be still computationally feasible. Recall \cite{E} that for a graded Cohen-Macaulay module $M$ of dimension $d$ over a standard graded polynomial ring $S$, if $x_1,\ldots,x_d \in S_1$ is an $M$-sequence, the  minimal free resolution of the artinian quotient $A=M/\lideal x_1,\ldots, x_d \rideal M$ over the polynomial ring $T=S/\lideal x_1,\ldots, x_d \rideal$ is obtained from the minimal free resolution of $M$ as an $S$-module by tensoring
with $T$. In particular the graded Betti numbers
$$b_{ij}(M)=\dim_K \Tor^S_i(M,K)_j = \dim_K \Tor^T_i(A,K)_j=b_{ij}(A)$$
coincide. We also can use duality  \cite[Theorem 21.15]{E} : Let $\omega_S \isom S(-\dim S)$ denote the dualizing module of S, and $\omega_M := \Ext^{\codim M}_S(M,\omega_S)$. If $M$ is Cohen-Macaulay, then the minimal free resolution of $\omega_M$ is obtained by applying $\Hom_S(-,\omega_S)[-\codim M]$ to the minimal free resolution of $M$. In particular,
$$\dim_K \Tor^S_i(M,K)_j = \dim_K \Tor^S_{\codim M-i}(\omega_M,K)_{\dim S -j}.$$

Finally, we can use the Koszul complex, that is, the minimal free resolution of $K$ as an $S$-module in order to compute $\Tor^S(M,K)$. We use all three options.

\begin{enumerate}
\item Compute a basis $s_0,\ldots,s_{g-2}$ of $H^0(C,K_C\tensor \eta) \subset K[x_0,x_1]_{2g-2}$ and $\omega_1,\ldots, \omega_{g}$ of $H^0(C,K_C) \subset K[x_0,x_1]_{2g-2}$.
\item Check that $s_{g-3},s_{g-2}$ have no common zero in $\PP^1$, i.e. they correspond to a $R$-sequence for $R=K[y_0,\ldots,y_{g-2}]/I_C \isom \Gamma_C(\OO_C,K_C\tensor \eta)$.
\item Compute representatives in $K[x_0,x_1]$ of a $K$-basis for the artinian reduction $A= \omega_R/\lideal(y_{g-3},y_{g-2}\rideal \omega_R$. Note that $A$ is a graded  artinian
$T=K[y_0,\ldots y_{g-4}]$-module
with Hilbert series $H_A(t) =\sum_{d} \dim A_d t^d=g+(g-3)t+t^2$.
\item Set $m=\frac{g}{2}$. Substitute $s_0,\ldots s_{g-4}$ into the $m^{th}$ Koszul matrix on $y_0,\ldots,y_{g-4}$ and compute the tensor product with the
$1 \times g$ matrix $(\omega_1,\ldots,\omega_{g})^t$. The result is a $ { g-3 \choose \frac{g}{2}-1} \times g{g-3 \choose \frac{g}{2}}$ matrix with entries in $H^0(C,\omega_C^2\tensor \eta) \subset K[x_0,x_1]_{4g-4}$.
\item Reduce the entries module the ideal $\lideal s_{g-3},s_{g-2} \rideal\lideal \omega_1,\ldots,\omega_{g} \rideal \subset K[x_0,x_1]$. The result is now a matrix $M_{\mathrm{pol}}$ of polynomials with entries in the $(g-3)$-dimensional space of representatives of $A_2$.
\item Compute the  $(g-3) { g-3 \choose \frac{g}{2}-1} \times g{g-3 \choose \frac{g}{2}}$ coefficient matrix of $M_{\rm field}$ with values in $K$. The kernel of $M_{\mathrm{field}}$ is isomorphic to $$\Tor^m_S(\omega_R,K)_m \isom \Tor^{g-3-m}_S(R,K)_{g-1-m} \isom K_{g-3-m,2}(C,K_C \tensor \eta),$$ whose dimension equals
$\dim K_{\frac{g}{2}-2,1}(C,K_C\tensor \eta)$ since $g-3-m+1=\frac{g}{2}-2$.
\item Use the finite field linear algebra subroutines/packages \emph{\href{http://www-ljk.imag.fr/membres/Jean-Guillaume.Dumas/FFLAS/}{FFLAS}} \cite{DGP} to compute the rank of $M_{\mathrm{field}}$ over the finite field $K=\FF_p$.
\end{enumerate}
\vskip 3pt

Our approach to the torsion bundle conjecture (Conjecture B) for bounded $g$ and $\ell$ is similar. Again we work with a $g$-nodal curve $C$ and carry out the following steps:

\begin{enumerate}

\item Compute bases of $s_0,\ldots, s_{g-2}$ of $H^0(C,K_C \tensor \eta) \subset K[x_0,x_1]_{2g-2}$ and $t_0,\ldots,t_{g-3}$ of
 $H^0(C,K_C \tensor \eta^k) \subset K[x_0,x_1]_{2g-2}$ and the kernel of the multiplication map $$\mu: H^0(C,K_C \tensor \eta) \tensor H^0(C,K_C \tensor \eta^{\otimes k}) \to H^0(C,K^2_C \tensor \eta^{\otimes (k+1)}).$$ Since the map is surjective,  $\mbox{Ker}(\mu)$ is $(g-1)^2-(3g-3)=(g-1)(g-4)$ dimensional.

\item $\mbox{Ker}(\mu)$ can be reinterpreted as the linear presentation matrix $$\phi: S^{g^2-5g+4}(-2) \to S^{g-1}(-1)$$ of $M=\Gamma_C(\eta^{\otimes k}, K_C \tensor \eta)$ as an $S=K[y_0,\ldots,y_{g-2}]$ module. We wish to compute $\Tor^S_{\frac{g}{2}-1}(M,K)_{\frac{g}{2}} \isom K_{\frac{g}{2}-1,1}(\eta^{\otimes k}, K_C \tensor \eta)$.

\item Check that $s_{g-3},s_{g-2}$ have no common zero in $\PP^1$ and  compute basis for the  artinian reduction $B= M/\lideal y_{g-3},y_{g-2}\rideal M $. This time $B$ is a graded  artinian
$T=K[y_0,\ldots y_{g-4}]$-module
with Hilbert series $H_B(t) =(g-1)t+(g-1)t^2$.

\item Reinterpret the tensor product of the $(\frac{g}{2}-1)^{\mathrm{th}}$ Koszul matrix on $y_0, \ldots, y_{g-4}$ with $B$ as a
$(g-1){g-3 \choose \frac{g}{2}}$ square matrix with entries in the ground field, and compute its rank. The dimension of the kernel is the desired Betti number.

\end{enumerate}

\begin{proposition}
Conjecture B  hold over a field of characteristic zero for all $6 \le g\le 16$ and all primitive $\ell$-torsion bundle
with $\ell = 3$.
\end{proposition}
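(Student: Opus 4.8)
The plan is to carry out, for Conjecture B with $\ell=3$, the exact analogue of the argument behind Proposition~\ref{pgverif}: verify the required Koszul vanishing on an explicit $g$-nodal rational curve over a finite field, and then transfer the conclusion to a general smooth level curve by two applications of semicontinuity of Betti numbers. Since $\ell=3$ forces $1\le k\le\ell-2=1$, there is only one module to test, namely $\Gamma_C(\eta,K_C\otimes\eta)$, and the relevant group is $K_{\frac{g}{2}-1,1}(C;\eta,K_C\otimes\eta)$ from (\ref{conjectB}). For each even genus $g\in\{6,8,10,12,14,16\}$ I would pick a prime $p\equiv 1\pmod 3$ and an element $r\in\FF_p$ of exact multiplicative order $3$, choose $2g$ general points of $\PP^1(\FF_p)$, form the $g$-nodal rational curve $C$, compute its canonical multipliers $a_j^{\mathrm{can}}$, and alter some of them by $r$ to obtain a line bundle $\eta\in\Pic^0(C)$. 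Because $3$ is prime, $\eta$ has order exactly $3$ as soon as $\eta\not\cong\OO_C$, so $[C,\eta]$ is a genuine boundary point of $\ol{\cR}_{g,3}$.

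Then I would run the four-step algorithm for Conjecture B described above: compute a basis $s_0,\dots,s_{g-2}$ of $H^0(C,K_C\otimes\eta)\subset K[x_0,x_1]_{2g-2}$ (which also serves as a basis of $H^0(C,K_C\otimes\eta^{\otimes k})$, since $k=1$), the kernel of the multiplication map $\mu$, its reinterpretation as the linear presentation matrix of $M=\Gamma_C(\eta,K_C\otimes\eta)$ over $S=K[y_0,\dots,y_{g-2}]$, the Artinian reduction $B=M/\langle s_{g-3},s_{g-2}\rangle M$ after checking that $s_{g-3},s_{g-2}$ have no common zero on $\PP^1$, and finally the square matrix over $\FF_p$ obtained by tensoring the $(\frac{g}{2}-1)$-st Koszul matrix on $g-3$ variables with $B$. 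Using the FFLAS finite-field routines I would compute its rank; the corank equals $\dim K_{\frac{g}{2}-1,1}(C;\eta,K_C\otimes\eta)$. The outcome to be recorded is that this corank is $0$ for $g=8,12,14,16$ and exactly $1$ for $g=6$ and $g=10$ — precisely the even genera in the range with $g\equiv 2\pmod 4$ and $\binom{g-3}{g/2-1}$ odd, which are the cases where Conjecture B predicts one extra syzygy.

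To pass to characteristic zero I would argue as in Proposition~\ref{pgverif}. The finite-field computation is the reduction modulo $p$ of a computation over $\ZZ$ (the $2g$ points and the multipliers may be chosen integrally, with a primitive cube root of unity adjoined), so by upper-semicontinuity of Betti numbers over $\Spec\ZZ$ a $g$-nodal rational level $3$ curve over $\QQ$ satisfies $\dim K_{\frac{g}{2}-1,1}(C;\eta,K_C\otimes\eta)\le$ the corank computed over $\FF_p$. Since $\ol{\cR}_{g,3}$ is irreducible and $g$-nodal rational curves with an order-$3$ bundle appear as boundary points, a second application of semicontinuity shows that the general smooth $[C,\eta]\in\cR_{g,3}$ obeys the same bound. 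For $g=8,12,14,16$ the bound is $0$, which is (\ref{conjectB}). For $g=6$ and $g=10$ it is $1$; combined with the a priori lower bound $\dim K_{\frac{g}{2}-1,1}\ge 1$, valid on every level $3$ curve of these genera by the symmetry of the Koszul differentials discussed in this section, this forces equality, so the general such level curve carries exactly one extra syzygy, in agreement with Conjecture B.

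The main difficulty I expect is computational rather than conceptual. For $g=16$ the Koszul matrix of step (4) has size of order $2\cdot 10^4$, with entries in the two-dimensional degree-$2$ part of the Artinian module $B$, obtained by reducing polynomials of degree $4g-4$ in $K[x_0,x_1]$ modulo a product of linear ideals; assembling these entries and computing the rank over $\FF_p$ is feasible with FFLAS but demands care with memory and with the choice of $p$, so that the random nodal curve and the $2g$ chosen points are sufficiently generic. A minor but essential point, to be verified for each $g$, is that the two sections used for the Artinian reduction really do form a regular sequence on $M$, so that the reduction to $\Tor$ of an Artinian quotient is legitimate.
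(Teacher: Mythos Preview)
Your approach is exactly the paper's: a Macaulay2 computation on $g$-nodal rational curves over a finite field, followed by two semicontinuity steps, together with a symmetry argument for the lower bound of $1$ in the exceptional genera $g=6,10$.

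The one place you defer rather than argue is precisely where the paper supplies the substantive theoretical content. You invoke ``the symmetry of the Koszul differentials discussed in this section'' for the a~priori bound $\dim K_{\frac{g}{2}-1,1}\ge 1$, but semicontinuity by itself only gives $\le 1$, and Conjecture~B asserts \emph{exactly} one extra syzygy in the exceptional cases, so without the lower bound the proof is incomplete. The paper's argument is this: for $\ell=3$, $k=1$ one has $\eta^{\otimes(2k+1)}=\eta^{\otimes 3}=\OO_C$, so the module $\Gamma_C(\eta,K_C\otimes\eta)$ is self-dual, and hence so is its Artinian reduction $B=B_1\oplus B_2$, with $B_2\cong B_1^\vee$ and the multiplication $V\otimes B_1\to B_2$ given by a symmetric matrix. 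The relevant Koszul map $\kappa_B$ is obtained from the middle Koszul matrix in $g-3=2m+1$ variables by contraction against this pairing; by Lemma~\ref{middleKoszulMatrix} the middle Koszul matrix is skew-symmetric when $m=(g-4)/2$ is odd, i.e.\ when $g\equiv 2\pmod 4$, whence $\kappa_B$ is skew-symmetric of size $(g-1)\binom{g-3}{g/2-1}$. If in addition $\binom{g-3}{g/2-1}$ is odd, this size is odd and $\kappa_B$ cannot have full rank. This is what forces the extra syzygy for every (not merely general) level~$3$ curve when $g=6$ or $g=10$, and is the step your proposal still owes.
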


In more concrete terms the result says for $g=10$ that a general pair $[C,\eta] \in \rr_{g,\ell}$ the module $\Gamma_C(C; \eta^{\otimes k}, K_C\tensor \eta)$ for $1 \le k \le \ell-2$ has syzygies
\begin{verbatim}
       0  1   2   3   4   5  6 7
total: 9 54 126 126 126 126 54 9
    0: 9 54 126 126   .   .  . .
    1: .  .   .   . 126 126 54 9
\end{verbatim}
unless $\ell=2k+1$, in which case we have instead
\begin{verbatim}
       0  1   2   3   4   5  6 7
total: 9 54 126 126 126 126 54 9
    0: 9 54 126 126   1   .  . .
    1: .  .   .   1 126 126 54 9
\end{verbatim}

\begin{proof} By our computation, Conjecture B holds for an $g$-nodal example over a finite field. For documentation of the computation based on the {\it Macaulay2} package \emph{\href{http://www.math.uni-sb.de/ag/schreyer/home/computeralgebra.htm}{NodalCurves.m2}} see \emph{\href{http://www.math.uni-sb.de/ag/schreyer/home/computeralgebra.htm}{http://www.math.uni-sb.de/ag/schreyer/home/computeralgebra.htm}}. To complete the proof with a semi-continuity argument, we have to show that in the exceptional cases the Betti number cannot become zero.

The extra syzygy are explained as follows. The resolution of $\eta^{\otimes k}$ is  self-dual if and only if
$$\begin{array}{lllll}
\eta^{\otimes k}
 &\cong &
  \sExt^{g-3}_\OO( \eta^{\otimes k},\OO(-g))
  &\cong  &
  \sExt^{g-3}_\OO( \eta^{\otimes k},\omega_{\PP^{g-2}} )(-1) \\
 &\cong &
  \sHom(\eta^{\otimes k},K_C)\tensor K_C^{-1} \tensor \eta^{-1}
   &\cong &
    \eta^{\otimes (-k-1)}
\end{array}
$$
that is, $\eta^{\otimes (2k+1)}=\OO_C$. In this case the artinian reduction, $ B=B_1 \oplus B_2$ is self-dual as well, that is, $B_2= B_1^{\vee}$ and the multiplication map
 $V \tensor B_1 \to B_2$ with $V=K[y_0,\ldots,y_{g-4}]_1$ corresponds to a symmetric $(g-1)$ square matrix with entires in $V^{\vee}$.
 The relevant Koszul cohomology map for $B$
 $$
\kappa_B:\Lambda^{(g-2)/2} V \tensor B_1 \to \Lambda^{(g-4)/2} V \tensor B_2
$$
is obtained from the Koszul map
$$
\kappa:\Lambda^{(g-2)/2} V \to \Lambda^{(g-4)/2} V \tensor V
$$
by tensor product  with $\mu' : B_1 \to B_2 \tensor V^{\vee}$ and contraction $V^{\vee} \tensor V \to K$. We now apply the following well-known fact:

\begin{lemma}\label{middleKoszulMatrix}The middle Koszul matrix
$$\kappa':\Lambda^{m+1} V \tensor T \to \Lambda^{m} V \tensor T(1)$$
of a polynomial ring $T= \Sym V$  in $2m+1$ variables is symmetric if $m \equiv 0 \mod 2$. Otherwise $\kappa'$ is skew-symmetric.
\end{lemma}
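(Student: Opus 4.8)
The plan is to reduce the statement to an explicit sign computation on the Koszul differential. Since the matrix $\kappa'$ has entries that are linear forms on $V$, it is equivalent to package it as a $V$-valued bilinear form on $\bigwedge^{m+1}V$ and check that this form is (skew-)symmetric. Concretely, I would fix a basis $e_1,\dots,e_n$ of $V$ with $n=2m+1$, set $T=\Sym V$, and use the top exterior power to fix an identification $\bigwedge^n V\cong K$. This makes the wedge pairing $\bigwedge^m V\times\bigwedge^{m+1}V\to\bigwedge^n V=K$, $(\xi,\zeta)\mapsto\xi\wedge\zeta$, perfect, so that the monomial bases $\{e_J:|J|=m\}$ and $\{e_I:|I|=m+1\}$ become dual bases up to sign; this is exactly the identification under which "$\kappa'$ is a square matrix". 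Writing the middle Koszul differential as $\kappa'(e_I)=\sum_{k}(-1)^{k-1}e_{I\setminus\{i_k\}}\otimes e_{i_k}$ for $I=\{i_1<\dots<i_{m+1}\}$, the assertion becomes: the form $\beta(e_I,e_{I'}):=\langle\kappa'(e_I),e_{I'}\rangle$ (apply $\xi\mapsto\xi\wedge e_{I'}/(e_1\wedge\dots\wedge e_n)$ to the $\bigwedge^m V$-factor) satisfies $\beta(e_I,e_{I'})=(-1)^m\beta(e_{I'},e_I)$.

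Next I would compute $\beta$ on basis vectors. The $k$-th term survives only when $e_{I\setminus\{i_k\}}\wedge e_{I'}\neq 0$, i.e. when $I\setminus\{i_k\}$ and $I'$ partition $\{1,\dots,n\}$; a short check shows this forces $|I\cap I'|=1$, say $I\cap I'=\{a\}$, and then $i_k=a$, $I'=I^c\cup\{a\}$ and $I\setminus\{a\}=I'^c$. If $|I\cap I'|\neq 1$ then $\beta(e_I,e_{I'})=\beta(e_{I'},e_I)=0$ and there is nothing to check, so assume $I\cap I'=\{a\}$. Introducing the shuffle sign $\epsilon(A,B)$ by $e_A\wedge e_B=\epsilon(A,B)\,e_1\wedge\dots\wedge e_n$ for a partition $A\sqcup B$, and writing $\pi_I(a)$ for the position of $a$ in the increasing listing of $I$, one gets $\beta(e_I,e_{I'})=(-1)^{\pi_I(a)-1}\epsilon(I'^c,I')\,e_a$ and, symmetrically, $\beta(e_{I'},e_I)=(-1)^{\pi_{I'}(a)-1}\epsilon(I^c,I)\,e_a$. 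So everything reduces to comparing the two products of signs.

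The last step is the sign bookkeeping, which is the only delicate point; there is no conceptual obstacle. Using $\epsilon(A,B)=(-1)^{\operatorname{inv}(A,B)}$ with $\operatorname{inv}(A,B)=\#\{(x,y):x\in A,\ y\in B,\ x>y\}$ one has $\operatorname{inv}(A,B)+\operatorname{inv}(B,A)=|A|\,|B|$. Put $S:=I'^c=I\setminus\{a\}$ and $T:=I^c=I'\setminus\{a\}$, disjoint $m$-element sets with $\{1,\dots,n\}=S\sqcup T\sqcup\{a\}$. Expanding $\operatorname{inv}(T,S\cup\{a\})$ and $\operatorname{inv}(S,T\cup\{a\})$ by splitting off the element $a$, adding, and using $\operatorname{inv}(S,T)+\operatorname{inv}(T,S)=m^2$ together with $\pi_I(a)=1+\#\{s\in S:s<a\}$ and $\pi_{I'}(a)=1+\#\{t\in T:t<a\}$, one obtains $\operatorname{inv}(I^c,I)+\operatorname{inv}(I'^c,I')\equiv m+\pi_I(a)+\pi_{I'}(a)\pmod 2$. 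Substituting this back gives
$$\frac{\beta(e_I,e_{I'})}{\beta(e_{I'},e_I)}=(-1)^{\pi_I(a)-\pi_{I'}(a)}\,\epsilon(I'^c,I')\,\epsilon(I^c,I)=(-1)^{\pi_I(a)-\pi_{I'}(a)}\,(-1)^{m+\pi_I(a)+\pi_{I'}(a)}=(-1)^m,$$
so the middle Koszul matrix is symmetric exactly when $m$ is even and skew-symmetric otherwise. The one thing I would be careful about is to match the sign convention for $\kappa'$ with the one used implicitly in step (6) of the computation preceding the lemma, so that the symmetry/skew-symmetry dichotomy is stated consistently with how the matrix is actually formed there.
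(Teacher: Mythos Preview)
Your proof is correct and follows essentially the same approach as the paper: both package $\kappa'$ as a $V$-valued bilinear form on $\Lambda^{m+1}V$ via the wedge pairing $\Lambda^m V\times\Lambda^{m+1}V\to\Lambda^{2m+1}V\cong K$, observe that the form vanishes on basis pairs $(e_I,e_{I'})$ unless $|I\cap I'|=1$, and then compare the two nonzero values by a sign computation. The only difference is cosmetic: where you carry out the sign chase via inversion counts and positions, the paper writes $I=\{i_0\}\cup I'$, $J=\{i_0\}\cup J'$ with $I'\cap J'=\emptyset$ and moves $y_{i_0}$ past the length-$m$ blocks $y_{I'}$ and $y_{J'}$ directly in the wedge algebra, obtaining the factor $(-1)^m$ in two lines without ever unpacking $\epsilon$ into inversions.
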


Thus in case $\ell=2k+1$ the matrix $\kappa_B$ is as tensor product of matrices with symmetry properties, skew-symmetric precisely in case  $\frac{g-4}{2} \equiv 1 \mod 2$ i.e. $g \equiv 2 \mod 4$.
If in addition ${g-3 \choose \frac{g}{2}-1}$ is odd, then $\kappa_B$ is a skew-symmetric matrix of odd size, and $\kappa_B$ cannot have maximal rank.
This completes the argument and justifies the exceptions in the statement of Conjecture B.
\end{proof}

\noindent {\it Proof of Lemma \ref{middleKoszulMatrix}.}
Under the pairing $\Lambda^{m}V \times \Lambda^{m+1}V \to \Lambda^{2m+1} V \isom K$ the matrix $\kappa'$
corresponds to the composition
$$
\psi:\Lambda^{m+1}V \tensor\Lambda^{m+1}V \to V \tensor \Lambda^m V \tensor \Lambda^{m+1} V \to V \tensor \Lambda^{2m+1}V.
$$
Hence for basis elements $y_I=y_{i_0} \wedge \ldots \wedge y_{i_{m}},y_J=y_{j_0} \wedge \ldots \wedge y_{j_{m}} \in \Lambda^{m+1}V$ for $I,J \subset \{0,\ldots,2m\}$ we have that
$\psi(y_I \tensor y_J)=0$ unless $I \cup J=\{0,\ldots,2m\}$, which means that $I \cap J$ consists of precisely one element. Furthermore for disjoint sets $I'=\{i_1,\ldots,i_m\}$ and $J'=\{j_1,\ldots,j_m\}$ we compute that
$$
\psi(y_{i_0} \wedge y_{I'} \tensor y_{i_0} \wedge y_{J'})
 = y_{i_0} \tensor y_{I'} \wedge y_{i_0} \wedge y_{J'}
 = (-1)^m y_{i_0} \tensor y_{i_0} \wedge y_{I'} \wedge y_{J'}$$
 $$= y_{i_0} \tensor y_{i_0} \wedge y_{J'} \wedge y_{I'}
 = (-1)^m y_{i_0} \tensor y_{J'} \wedge y_{i_0} \wedge y_{I'}
 = (-1)^m \psi(y_{i_0} \wedge y_{J'} \tensor y_{i_0} \wedge y_{I'})$$ as claimed. \qed

\begin{remark} Notice that for a field of characteristic 2 the matrix $\psi$ is symmetric with zeroes on the diagonal. Thus for $\ell=2k+1$ and  ${g-3 \choose \frac{g}{2}-1} \equiv 1 \mod 2$ a torsion bundle has an extra syzygy independent of the parity of $\frac{g}{2}$.
\end{remark}

\section{The failure of the Prym-Green Conjecture on $\cR_{8}$}

We argue that the Prym-Green Conjecture A fails on $\cR_8$. Our findings suggest that for a general Prym canonical curve  $\phi_{K_C\otimes \eta}:C\rightarrow \PP^6$ of genus $8$, the non-vanishing
$K_{2, 1}(C, K_C\otimes \eta)\neq 0$ holds and the corresponding  Betti table is the following:
\begin{verbatim}
        0 1  2  3  4 5
 total: 1 8 36 56 35 8
     0: 1 .  .  .  . .
     1: . 7  1  .  . .
     2: . 1 35 56 35 8
\end{verbatim}
Moreover, in all cases the extra syzygy in $K_{2, 1}(C, K_C\otimes \eta)$ is never of maximal rank $7$. We analyze this situation geometrically.

Throughout this section $\ell=2$ and we set $\cR_g:=\cR_{g, 2}$. We recall that $\mathcal{GP}_{g, d}^r$ is the closure in $\cM_g$
of the locus of curves with a base point free linear series $L\in W^r_d(C)$ such that the Petri map $\mu_0(L): H^0(C, L)\otimes H^0(C, K_C\otimes L^{\vee})\rightarrow H^0(C, K_C)$ is not injective. The syzygy locus $\cZ_{8, 2}\subset \cR_8$ considered in Theorem \ref{prymgreenclass} can be extended to the level of the
universal genus $8$ Jacobian $\mathfrak{Pic}_8^{14}\rightarrow \cM_8$ and we introduce the Koszul locus
$$\mathfrak{Kosz}:=\bigl\{[C, L]\in \mathfrak{Pic}_8^{14}: K_{2, 1}(C, L)\neq 0\bigr\}.$$
Note that $\mbox{dim } K_{2, 1}(C, L)=\mbox{dim } K_{1, 2}(C, L)$, for each $[C, L]\in \mathfrak{Pic}_8^{14}$.
It is proved in \cite{Ve} that $\mathfrak{Kosz}$ is indeed a divisor on $\mathfrak{Pic}^{14}_8$, that is, $\mathfrak{Kosz}\neq \mathfrak{Pic}^{14}_8$.
Via the map $\iota:\cR_8\hookrightarrow \mathfrak{Pic}^{14}_8$ given by $\iota([C, \eta]):=[C, K_C\otimes \eta]$, one has that
$\iota^*(\mathfrak{Kosz})=\cZ_{8, 2}$.
\vskip 3pt

Let us consider the parameter space $\Sigma$ of triples $(C, L, V)$, where $[C, L]\in \mathfrak{Pic}^{14}_8$  induces a map $\phi_L:C\rightarrow \PP^6$ and $V\in G\bigl(5, H^0(\PP^6, \I_{C/\PP^6}(2))\bigr)$. The variety $\Sigma$ is birational to a Grassmann bundle over an open subset of $\mathfrak{Pic}^{14}_8$. Denoting by $X:=\mathrm{Bs} |V|\subset \PP^6$ the base locus of the system of quadrics in $V$, one can write that
$X=C+C'$, where the linked curve $C'\subset \PP^{6}$ has genus $14$, degree $\mbox{deg}(C')=18$ and $h^1(C', \OO_{C'}(1))=2$. For a general $(C, L, V)\in \Sigma$, we have an isomorphism
\begin{equation}\label{resi}
H^0(\PP^6, \I_{C/\PP^6}(2))/V\stackrel{\cong}\longrightarrow H^0(C', K_{C'}(-1)).
\end{equation}
This linkage construction induces  a dominant rational map
$$\varphi:\Sigma\dashrightarrow \cM_{14}, \  \ (C, L, \Sigma)\mapsto [C'].$$
The image of $\mathfrak{Kosz}$ under this map has a transparent geometric interpretation. It is proved in  \cite{Ve} Lemma 4.4 that the Petri map for the genus $14$ curve, that is,
$$\mu_0(\OO_{C'}(1)): H^0(C', \OO_{C'}(1))\otimes H^0(C', K_{C'}(-1))\rightarrow H^0(C', K_{C'})$$
is an isomorphism if and only if  $K_{2, 1}(C, L)=0$. We summarize the situation as follows:
\begin{proposition} A paracanonical genus $8$ curve $C\subset \PP^6$ belongs to the Koszul divisor if and only if the residual genus $14$ curve $C'\subset \PP^6$ is Petri special.
\end{proposition}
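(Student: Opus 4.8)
\medskip\noindent\emph{Proof proposal.}
The plan is to read the statement off from the linkage picture assembled in this section, the only substantial ingredient being the equivalence of \cite{Ve}, Lemma 4.4, recalled above. By the very definition of $\mathfrak{Kosz}$, a paracanonical curve $C\subset\PP^6$ with $[C,L]\in\mathfrak{Pic}^{14}_8$ lies in the Koszul divisor if and only if $K_{2,1}(C,L)\neq 0$; and \cite{Ve}, Lemma 4.4, says that $K_{2,1}(C,L)=0$ if and only if the Petri map
$$\mu_0\bigl(\OO_{C'}(1)\bigr)\colon H^0\bigl(C',\OO_{C'}(1)\bigr)\otimes H^0\bigl(C',K_{C'}(-1)\bigr)\longrightarrow H^0(C',K_{C'})$$
of the residual genus $14$ curve $C'$ is an isomorphism. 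So it remains only to match ``$\mu_0(\OO_{C'}(1))$ is not an isomorphism'' with ``$C'$ is Petri special''.

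First I would fix a general $(C,L,V)\in\Sigma$, so that $\dim V=5$, the base locus $X=\mathrm{Bs}|V|$ is the complete intersection of these five quadrics, the residuation $X=C\cup C'$ has $C'$ smooth of genus $14$, degree $18$, with $h^1(C',\OO_{C'}(1))=2$, the isomorphism (\ref{resi}) holds, and the hyperplane bundle $\OO_{C'}(1)$ is very ample. Riemann--Roch on $C'$ then gives $h^0(C',\OO_{C'}(1))=18-14+1+2=7$, while $K_{C'}(-1)$ has degree $2\cdot 14-2-18=8$ and, by Serre duality, $h^0(C',K_{C'}(-1))=h^1(C',\OO_{C'}(1))=2$. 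Hence $K_{C'}(-1)\in W^1_8(C')$, and the source of $\mu_0(\OO_{C'}(1))$ has dimension $7\cdot 2=14=g(C')=\dim H^0(C',K_{C'})$, the target. Since the two sides have equal dimension, $\mu_0(\OO_{C'}(1))$ is an isomorphism if and only if it is injective; and, the Petri maps of a line bundle and of its Serre dual agreeing up to transposition of the tensor factors, its non-injectivity is exactly the condition that $C'$ be Petri special with respect to the residual pencil $K_{C'}(-1)\in W^1_8(C')$ (for a general such $C'$ this pencil is base-point free, so that $[C']\in\mathcal{GP}^1_{14,8}$).

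Chaining the equivalences gives the proposition: $C$ lies in the Koszul divisor $\iff K_{2,1}(C,L)\neq 0$ (definition of $\mathfrak{Kosz}$) $\iff \mu_0(\OO_{C'}(1))$ is not an isomorphism (\cite{Ve}, Lemma 4.4) $\iff \mu_0(\OO_{C'}(1))$ is not injective (the dimension count) $\iff C'$ is Petri special; the symmetry $\dim K_{2,1}(C,L)=\dim K_{1,2}(C,L)$ noted above makes the two Koszul strands interchangeable here. The one genuinely nontrivial input is \cite{Ve}, Lemma 4.4, which I expect would be the main obstacle were it not already available: its proof runs through the mapping-cone description of the minimal free resolution of a linked curve, under which the linear syzygy strand carrying $K_{2,1}(C,L)\cong K_{1,2}(C,L)$ is exchanged with the quadratic part of the resolution of $C'$, so that a linear syzygy among the quadrics through $C$ persisting in the cone corresponds precisely to a failure of the Petri map of $C'$ to be of maximal rank. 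Granting that lemma, the remainder is bookkeeping; the only point needing care is that everything is asserted for a general member of $\Sigma$, where $C'$ is smooth, (\ref{resi}) holds, and $K_{C'}(-1)$ is base-point free, so that the words ``Petri special'' carry their usual meaning.
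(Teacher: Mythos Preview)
Your proposal is correct and follows exactly the paper's approach: the proposition is presented in the paper as a direct summary of the preceding discussion, with the only substantive input being \cite{Ve} Lemma 4.4, and you have simply spelled out the bookkeeping (the dimension count $7\cdot 2=14=g(C')$ and the identification $K_{C'}(-1)\in W^1_8(C')$) that the paper leaves implicit. There is nothing to add.
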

The locus of such Petri special curves $C'\subset \PP^6$ splits into two components depending on whether $K_{C'}(-1)\in W^1_8(C')$ is base point free, and then $[C']\in \mathcal{GP}_{14, 8}^1$, or else, it has a base point, in which case $C'$ is $7$-gonal.
\vskip 2pt
We fix a point $[C, L]\in \mathfrak{Kosz}$ corresponding to an embedded curve $\phi_L:C\hookrightarrow \PP^6$ of degree $14$ having Betti diagram as above, where $S$ denote the homogeneous coordinate ring of $\PP^6$. Consider the second syzygy matrix $S^7(-2)\oplus S(-3) \leftarrow S(-3)\oplus S^{35}(-4)$ of $ S/I_C$. The entries  of the submatrix $S^7(-2) \leftarrow S(-3)$ are given by linear forms $(\ell_0, \ldots, \ell_6)$ corresponding to a syzygy
$$0\neq \gamma:=\sum_{i=0}^6 \ell_i\otimes q_i\in \mathrm{Ker}\bigl\{H^0(\PP^6, \OO_{\PP^6}(1))\otimes H^0(\PP^6, \I_{C/\PP^6}(2))\rightarrow H^0(\PP^6, \I_{C/\PP^6}(3))\bigr\}.$$
\begin{definition}
The \emph{syzygy scheme} $\mbox{Syz}(\gamma)$ of $\gamma \in K_{2, 1}(C, L)$ is the largest subscheme $Y\subset \PP^6$ such that
$\gamma\in H^0(\PP^6, \OO_{\PP^6}(1))\otimes I_Y(2)$.
The \emph{rank} of $\gamma$ is the dimension of the linear subspace $\langle \ell_0,\ldots, \ell_6 \rangle \subset H^0(\PP^6, \OO_{\PP^6}(1))$.
\end{definition}

We refer to \cite{AN} and \cite{vB} for general background on syzygy schemes.
In case of rank $6$ first order syzygies among quadrics, say $\ell_0=0$, there exists a $6\times 6$ skew-symmetric matrix of linear forms $(a_{ij})_{i, j=1}^6$ such that $q_i=\sum_{j=1}^6 \ell_j a_{ji}$ for $i=1, \ldots, 6$ by \cite[Lemma 4.3]{S}. In this case the syzygy scheme is $C'=\PP^6\cap X_6$, where $X_6\subset \PP^{20}:=\PP_{\ell_i, a_{ij}}$ is the universal rank $6$ syzygy scheme, given by equations $\{q_i=0\}_{i=1}^6$ and $\mathrm{Pfaff}\bigl((a_{ij}\bigr))=0$.
\vskip 3pt

We consider first the case when $\gamma\in K_{2, 1}(C, L)$ is a rank $7$ syzygy, which we can view as a section of the vector bundle $\Omega_{\PP^6}^1(3)$. By direct calculation using the Euler sequence, one finds $c_6(\Omega_{\PP^6}^1(3))=43$, which is the number of zeroes of a general section of $\Omega_{\PP^6}^1(3)$. Since $\gamma\in H^0(\PP^6, \Omega_{\PP^6}^1(3)\otimes \I_C)$, to account for the contribution of $C$ we use the excess intersection formula and from $c_6(\Omega_{\PP^6}^1(3))$ we subtract
$$c_1\bigl(\Omega^1_{\PP^6}(3)_{|C}-N_{C/\PP^6}\bigr)=\mbox{deg}(M_L\otimes L^{\otimes 2})-\mbox{deg}(N_{C/\PP^6})=11\cdot 14-8\cdot 14=42.$$
This shows that $Z(\gamma)=C\cup \{p\}$, where $p\in \PP^6-C$. Choose now a $5$-dimensional subspace $V\subset H^0(\PP^6, \I_{C/\PP^6}(2))$ and let $C'\subset \PP^6$ be the corresponding linked curve. Since all the quadrics in $H^0(\PP^6, \I_{C/\PP^6}(2))$ vanish at $p$, we find via the isomorphism (\ref{resi}) that the pencil $K_{C'}(-1)\in W^1_8(C')$ has a base point at $C'$. In conclusion, via linkage, to rank $7$ syzygies $\gamma\in K_{2, 1}(C, L)$ correspond $7$-gonal curves $C'$ of genus $14$.
\vskip 3pt

Assume now that $\gamma$ is  a rank $6$ syzygy.
Let $p \in \PP^6$ denote the point defined by the $6$ forms. The syzygy $\gamma$ may be interpreted
as a section in $H^0(\PP^6, \F(3))$, where
 $\F$ is the first syzygy sheaf of the ideal sheaf $\I_{p/\PP^6}$:
 $$ 0 \to \F \to \OO^{\oplus 6}_{\PP^6}(-1) \to \I_{p/\PP^6} \to 0.$$
 Since $\mbox{rk}(\F)=5$, the scheme $\mbox{Syz}(\gamma)$ is expected to be $1$-dimensional away from $p$.
 Indeed, the zero locus of a general global section of $\F(3)$ vanishes in $p$ and a smooth half canonically embedded curve $C'$ of degree 21 and genus 22, see \cite[Theorem 4.4]{ELMS}. We refer to \cite{ES} for more details.

 In our case $C'$ is reducible, because $C$ is a component of $C'$. In fact
 $$C' = C \cup E,$$ where $E$ is an elliptic normal curve of degree $7$, intersecting $C$ in $14$ points. Moreover,
 $$C\cdot E\in |\OO_E(2)| \ \mbox{ and }  C\cdot E\in |L^{\otimes 2}\otimes K_C^{\vee}|.$$
 Observe that $[C, L]\in \cR_8$ if and only if $E\cdot C\in |K_C|$. Reversing this construction, we obtain the following result.

 \begin{theorem} The divisor $\mathfrak{Kosz}\subset \mathfrak{Pic}_8^{14}$ is reducible and has a unirational component $\mathfrak{Kosz}_6$ whose general point is a paracanonical curve with extra syzygies of rank $6$, as well as a component $\mathfrak{Kosz}_7$ whose general point is a  curve with syzygies of rank $7$. The syzygy scheme corresponding to a general point $[C, L]\in \mathfrak{Kosz}_6$ is the disjoint union of a point $p\in \PP^6$ and a nodal half-canonical reducible curve $C \cup E$ of genus 22, where $E$ is a smooth elliptic  normal curve of degree $7$ which intersects $C$ in $14$ distinct points.
 \end{theorem}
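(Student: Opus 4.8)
The plan is to run in reverse the two linkage constructions of the preceding pages, feeding them into the structure theory of syzygies of low rank on curves in $\PP^6$ (\cite{S},\cite{AN},\cite{vB},\cite{ELMS},\cite{ES}). By \cite{Ve} the locus $\mathfrak{Kosz}$ is a divisor, and its general point $[C,L]$ has $\dim K_{2,1}(C,L)=1$, so the rank of the essentially unique extra syzygy $\gamma$ is a well defined lower semicontinuous function on $\mathfrak{Kosz}$. A syzygy of rank $\le 5$ would force $\mbox{Syz}(\gamma)$ to have dimension $\ge 2$ in $\PP^6$, which is impossible since for a general $[C,L]\in\mathfrak{Kosz}$ the curve $C$ (which is then a general curve of genus $8$) is the unique positive dimensional part of $\mbox{Syz}(\gamma)$; hence the generic rank along each irreducible component of $\mathfrak{Kosz}$ is $6$ or $7$. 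Setting $\mathfrak{Kosz}_6:=\{[C,L]\in\mathfrak{Kosz}:\mbox{rk}(\gamma)\le 6\}$, which is closed by semicontinuity, and letting $\mathfrak{Kosz}_7$ be the union of the components not contained in $\mathfrak{Kosz}_6$, it suffices to prove: (i) $\mathfrak{Kosz}_6\ne\emptyset$, it is unirational, and its general point has the stated syzygy scheme; (ii) $\mathfrak{Kosz}_7\ne\emptyset$. The two loci are then automatically distinct, since at a general point of $\mathfrak{Kosz}_6$ the syzygy scheme acquires a positive dimensional component besides $C$, whereas at a general point of $\mathfrak{Kosz}_7$ it is $C$ together with a single point, as in the rank $7$ computation carried out above.

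For (i) I would reverse the rank $6$ computation. By \cite[Lem.~4.3]{S} a rank $6$ syzygy among the quadrics through $C$ is encoded by a point $p\in\PP^6$, six independent linear forms $\ell_1,\dots,\ell_6$ and a skew symmetric $6\times 6$ matrix $(a_{ij})$ of linear forms, and then $\mbox{Syz}(\gamma)=\{p\}\sqcup(\PP^6\cap X_6)$, where $X_6\subset\PP^{20}$ is the universal rank $6$ syzygy scheme, whose general linear section is a half canonical curve of degree $21$ and genus $22$ (\cite[Thm.~4.4]{ELMS},\cite{ES}). I would produce points of $\mathfrak{Kosz}_6$ by the explicit reducible construction: take a general smooth elliptic normal curve $E\subset\PP^6$ of degree $7$, a general divisor $D\in|\mathcal{O}_E(2)|$ of $14$ distinct points, and a smooth curve $C\subset\PP^6$ of degree $14$ and genus $8$ with $C\cap E=D$ and $\omega_C\otimes\mathcal{O}_C(D)\cong\mathcal{O}_C(2)$, so that $\Gamma:=C\cup E$ is a nodal half canonical curve of degree $21$ and arithmetic genus $8+1+14-1=22$; using the structure theory above one checks that $\Gamma$ is a rank $6$ syzygy scheme and that restriction along $C\subset\Gamma$ sends the distinguished syzygy of $\Gamma$ to a nonzero element of $K_{2,1}(C,\mathcal{O}_C(1))$, whence $[C,\mathcal{O}_C(1)]\in\mathfrak{Kosz}_6$ with syzygy scheme $\{p\}\sqcup(C\cup E)$. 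Conversely, for a general $[C,L]\in\mathfrak{Kosz}_6$ the residual curve $E:=\overline{\mbox{Syz}(\gamma)\setminus(\{p\}\cup C)}$ has degree $21-14=7$, and half canonicity of $\Gamma=C\cup E$ forces $\mathcal{O}_E(C\cdot E)\cong\mathcal{O}_E(1)^{\otimes 2}$, hence $C\cdot E=14$, hence $p_a(E)=1$ by the genus formula; so $E$ is a nondegenerate, and generically smooth, elliptic normal curve of degree $7$ and the construction dominates $\mathfrak{Kosz}_6$. A dimension count over the parameter space of triples $(E,D,C)$ — with $E$ in its rational family, $D$ in $|\mathcal{O}_E(2)|$ and $C$ through $D$ subject to the determinant condition — should give the expected dimension $28=\dim\mathfrak{Kosz}$, so $\mathfrak{Kosz}_6$ is a union of components, and since all three parameters vary in rational families (the last by the description in \cite{ES}), $\mathfrak{Kosz}_6$ is unirational. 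Imposing in addition $\mathcal{O}_C(D)\cong\omega_C$, i.e. $L\otimes\omega_C^{-1}\in\mbox{Pic}^0(C)[2]$, recovers the inclusion $\iota(\cR_{8,2})\subset\mathfrak{Kosz}_6$ observed earlier.

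For (ii) I would reverse the rank $7$ computation carried out above, where $Z(\gamma)=C\sqcup\{p\}$ and, after linkage by a general net of quadrics through $C$, the residual genus $14$ curve $C'$ has its pencil $K_{C'}(-1)\in W^1_8(C')$ pass through the point linked to $p$, so that $K_{C'}(-1)$ has a base point and $C'$ is $7$-gonal. Starting instead from a general $7$-gonal curve $C'$ of genus $14$, with $7$-gonal pencil $A$ and a general point $x\in C'$, form $\mathcal{O}_{C'}(1):=\omega_{C'}\otimes A^{-1}\otimes\mathcal{O}_{C'}(-x)$, which for general choices is very ample of degree $18$ with $h^1=2$; then the quadrics through $C'\subset\PP^6$ form a $5$-dimensional space, and letting $C$ be the curve residual to $C'$ in their complete intersection and $L:=\mathcal{O}_C(1)$, one checks by tracking $x$ through the linkage that $K_{2,1}(C,L)$ contains a rank $7$ syzygy, so $[C,L]\in\mathfrak{Kosz}_7$. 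Since $7$-gonal curves of genus $14$ exist and this construction sweeps out a family of the expected dimension $28$, $\mathfrak{Kosz}_7$ is a nonempty union of components of $\mathfrak{Kosz}$. Combined with (i), this proves the theorem.

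The hard part will be the verifications internal to (i) and (ii): that the reducible half canonical curve $\Gamma=C\cup E$ is genuinely a rank $6$ syzygy scheme and that the class it induces in $K_{2,1}(C,\mathcal{O}_C(1))$ is nonzero — equivalently, that the restriction $H^0\bigl(\PP^6,\Omega_{\PP^6}^1(3)\otimes\mathcal{I}_\Gamma\bigr)\to H^0\bigl(\PP^6,\Omega_{\PP^6}^1(3)\otimes\mathcal{I}_C\bigr)$ does not annihilate the distinguished syzygy — together with the analogous statement in the rank $7$ case, and the transversality needed for the parameter spaces to have exactly the expected dimension. These are precisely the places where the structure theorems of \cite{S}, \cite{ELMS} and \cite{ES} on rank $6$ syzygies and half canonical curves in $\PP^6$ are used in an essential way.
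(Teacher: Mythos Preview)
Your outline is in the right spirit, but the paper's proof of part (i) uses a cleaner parametrization that sidesteps exactly the ``hard part'' you flag at the end. Instead of parametrizing by triples $(E,D,C)$ and then having to \emph{verify} that $C\cup E$ is a rank $6$ syzygy scheme, the paper parametrizes directly by the data $(E,p,\gamma)$: choose an elliptic normal septic $E\subset\PP^6$ (one modulus, after dividing by $\PGL(7)$), a point $p\in\PP^6\setminus E$ (six parameters), and a class $[\gamma]\in\PP\bigl(H^0(\PP^6,\Omega^1_{\PP^6}(3)\otimes\I_{E\cup\{p\}})\bigr)$. The point is that this $22$-dimensional space \emph{is} the space of syzygies in $K_{1,2}(E,\OO_E(1))$ whose linear forms all vanish at $p$, so $\gamma$ is automatically a syzygy of rank $\le 6$ with $E\cup\{p\}\subset\mbox{Syz}(\gamma)$; the residual curve $C$ then comes for free as the other component of the half-canonical zero scheme, and $[C,\OO_C(1)]\in\mathfrak{Kosz}_6$ by construction. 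The dimension count $1+6+21=28=\dim\mathfrak{Pic}_8^{14}-1$ is immediate, and unirationality is clear because each of the three pieces is a rational variety. By contrast, your parametrization by $(E,D,C)$ with $D\in|\OO_E(2)|$ leaves the dimension count vague (``should give'') and forces you to construct $C$ through a prescribed $14$-point scheme on $E$ with a determinant condition, which is neither obviously possible nor obviously rational. The paper also does not argue abstractly that the construction yields a smooth $C$; this is checked by a \emph{Macaulay2} computation over a finite field together with semicontinuity.

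For part (ii) the two approaches genuinely diverge. You propose to reverse the linkage to $7$-gonal curves of genus $14$ and argue theoretically that the resulting $[C,L]$ has a rank $7$ syzygy; this is plausible but, as you admit, the verification that the syzygy really has rank $7$ (and the dimension count) is not carried out. The paper instead simply exhibits, via a \emph{Macaulay2} search, points $[C,L]\in\mathfrak{Kosz}$ with $\gamma$ of full rank $7$; since the rank of the syzygy is upper semicontinuous, such a point cannot lie in the closure of $\mathfrak{Kosz}_6$, so $\mathfrak{Kosz}_6\subsetneq\mathfrak{Kosz}$ and there must be another component $\mathfrak{Kosz}_7$. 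Your linkage idea, if made rigorous, would give a more conceptual proof of (ii), but the paper's computational shortcut avoids the transversality and rank verifications entirely.
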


 \begin{proof} In order to construct $\mathfrak{Kosz}_6$, we reverse the construction and start with an elliptic normal curve $E \subset \PP^6$ and a point $p\in \PP^6-E$.
 The minimal free resolution of $E$ has the following Betti table:
\begin{verbatim}
            0  1  2  3  4 5
     total: 1 14 35 35 14 1
         0: 1  .  .  .  . .
         1: . 14 35 35 14 .
         2: .  .  .  .  . 1
\end{verbatim}

The group $K_{1, 2}(E, \OO_E(1))$ is $35$-dimensional and there is a $22$-dimensional vector space $H^0\bigl(\PP^6, \Omega_{\PP^6}^1(3)\otimes \I_{E\cup \{p\}}\bigr)$ of second syzygies, whose order ideal vanishes at the point $p$. The zero locus of such a syzygy consists of $p$ together with a half-canonical curve, having  $E$ as one of the components. Its residual curve
 is the desired point  $[C, \OO_C(1)] \in \mathfrak{Kosz}$. To show that such curves fill-up a component of $\mathfrak{Kosz}$, we count dimensions. Modulo the action of $PGL(7)$, septic elliptic curves $E\subset \PP^6$ depend on one parameter. Indeed, if $H_7\subset SL_7(\mathbb C)$ denotes the level $7$
Heisenberg group, there is a  $1:1$ correspondence between elliptic curves with a level $7$ structure and $H_7$-equivariantly embedded normal elliptic curves $E\subset \PP^6$.  The choice of $p \in \PP^6$ gives $6$ dimensions. Having chosen $E$ and $p$ as above, the choice $[\gamma] \in \PP\bigl(H^0(\PP^6, \Omega^1_{\PP^6}(3)\otimes \I_{E\cup \{p\}})\bigr)$ gives another $21$ dimensions. Since the construction depends on $1+6+21=\mbox{dim}(\mathfrak{Pic}_8^{14})-1$ parameters, curves obtained in this way fill-up a component $\mathfrak{Kosz}_6$ of $\mathfrak{Kosz}$. To complete the proof of the unirationality of $\mathfrak{Kosz}_6$, it remains to show that the construction leads to a smooth curve for general choices of parameters. By semi-continuity this can checked with {\it Macaulay2} over a finite field, for details see the function
\emph{\href{http://www.math.uni-sb.de/ag/schreyer/home/M2/doc/Macaulay2/KoszulDivisorOnPic14M8/html/_unirationality__Of__D1.html}{unirationaliyOfD1}} in our {\it Macaulay2} package
\emph{\href{http://www.math.uni-sb.de/ag/schreyer/home/M2/doc/Macaulay2/KoszulDivisorOnPic14M8/html/index.html}{KoszulDivisorOnPic14M8}}.

\vskip 3pt
By running the  function  \emph{\href{http://www.math.uni-sb.de/ag/schreyer/home/M2/doc/Macaulay2/KoszulDivisorOnPic14M8/html/_get__Curve__On__Koszul__Divisor.html}
{getCurveOnKoszulDivisor}} contained in the {\it Macaulay2} package \emph{\href{http://www.math.uni-sb.de/ag/schreyer/home/M2/doc/Macaulay2/KoszulDivisorOnPic14M8/html/index.html}{KoszulDivisorOnPic14M8}}, we observe that there exist points $[C, L]\in \mathfrak{Kosz}$ with syzygies $0\neq \gamma\in K_{1, 2}(C, L)$ having full rank $7$. They cannot lie in the closure of $\mathfrak{Kosz}_6$ for the rank of a syzygy is upper semi-continuous, therefore $\mathfrak{Kosz}_6\subsetneq \mathfrak{Kosz}$.
\end{proof}
\begin{remark}
Since $\mathfrak{Pic}^{14}_8$ is unirational \cite{M}, \cite{Ve}, over a finite field $\FF_p$ one can find points on absolutely irreducible components of $\mathfrak{Kosz}$ with probability approximately equal to $\frac{1}{p}$. By our \emph{\href{http://www.math.uni-sb.de/ag/schreyer/home/M2/doc/Macaulay2/KoszulDivisorOnPic14M8/html/_experiment1.html}{experiment1}} of the package  \emph{\href{http://www.math.uni-sb.de/ag/schreyer/home/M2/doc/Macaulay2/KoszulDivisorOnPic14M8/html/index.html}{KoszulDivisorOnPic14M8}}, we observe that the rate of points
$[C, L]\in \mathfrak{Kosz}$ corresponding to syzygies of rank $6$ respectively $7$ is approximately the same.  We conclude that most likely, $\mathfrak{Kosz}$ has precisely two components, namely $\mathfrak{Kosz}_6$ and $\mathfrak{Kosz}_7$.
\end{remark}

The remaining part of this section is devoted to obtain strong evidence for  the inclusion $\cR_8\subset \mathfrak{Kosz}$. Since we do not have a unirational parametrization of $\cR_8$ (however see \cite{FV3} for results in this direction), we are only able to perform {\it Macaulay2} experiments with smooth curves from lower dimensional subvarieties
 of $\rr_8$, for instance, we tested the curves constructed by the function
 \emph{\href{http://www.math.uni-sb.de/ag/schreyer/home/M2/doc/Macaulay2/PrymCanonicalCurves/html/_get__Canonical__Curve__Of__Genus8__With2__Torsion.html}
  {getCanonicalCurveOfGenus8With2Torsion}} of our package \emph{ \href{http://www.math.uni-sb.de/ag/schreyer/home/M2/doc/Macaulay2/PrymCanonicalCurves/html}{PrymCanonicalCurves}}.
 All these calculations lead to curves in the component $\mathfrak{Kosz}_6$, having an extra syzygy of rank $6$. We first observe that in order to show the inclusion $\cR_8\subset \mathfrak{Kosz}$, it suffices to show that general $1$-nodal curves from the boundary divisor $\Delta_0^{'}\subset \rr_8$ lie in the locus $\zz_{8, 2}$.
 \begin{proposition}\label{delta0}
 Assuming that the inclusion $\Delta_0^{'}\subset \zz_{8, 2}$ holds, then $\zz_{8, 2}=\rr_8$ and the Prym-Green Conjecture fails on $\cR_8$.
 \end{proposition}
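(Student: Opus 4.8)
The plan is to argue by contradiction, converting the inclusion $\Delta_0'\subset\zz_{8,2}$ into a numerical impossibility for the virtual class of Theorem~\ref{prymgreenclass}. Recall that over the open locus $\cR_{8,2}\subset\wt{\ssR}_{8,2}$ the scheme $\cZ_{8,2}$ is the degeneracy locus of the bundle map $\varphi:=\varphi_{1,2}\colon\mathbb{H}_{1,2}\to\mathbb{G}_{1,2}$ from the proof of Theorem~\ref{prymgreenclass} (the case $g=2i+6$, $i=1$, $\ell=2$), where $\mathbb{H}_{1,2}$ and $\mathbb{G}_{1,2}$ have the same rank; thus $\cZ_{8,2}$ is the restriction to $\cR_{8,2}$ of the zero scheme of the section $\det\varphi$ of $\det(\mathbb{G}_{1,2}-\mathbb{H}_{1,2})$. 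Hence there is a dichotomy: either $\cZ_{8,2}=\cR_{8,2}$ — in which case $\varphi$ is degenerate at every point, $K_{2,1}(C,K_C\otimes\eta)\neq0$ for a general $[C,\eta]\in\cR_8$, and the Prym--Green Conjecture fails, as desired — or $\det\varphi\not\equiv0$, and then $D:=Z(\det\varphi)$ is a genuine effective divisor on the smooth stack $\wt{\ssR}_{8,2}$ whose class is
$$[\zz_{8,2}]^{\mathrm{virt}}=27\lambda-4(\delta_0'+\delta_0'')-6\delta_0^{(1)}\in\sPic(\wt{\ssR}_{8,2}),$$
obtained by specializing Theorem~\ref{prymgreenclass} to $i=1$, $\ell=2$. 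It remains to exclude the second alternative.

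So assume $\det\varphi\not\equiv0$. Since $\Delta_0'\subset\zz_{8,2}\subseteq D$ and $\Delta_0'$ is an irreducible divisor, it is a component of $D$, so $D=m\Delta_0'+R$ with $m\ge1$ and $R$ effective. I would then test $D$ against a pencil of level curves on a $K3$ surface. Fix a general polarized $K3$ surface $S$ of genus $8$ (so $\Pic(S)=\ZZ\cdot\OO_S(1)$), a Lefschetz pencil $\{C_t\}_{t\in\PP^1}\subset|\OO_S(1)|$ — all of whose members are irreducible, with $\lambda\cdot\PP^1=9$, $\delta_0\cdot\PP^1=66$, $\delta_i\cdot\PP^1=0$ for $i>0$ — and let $\wt B\subset\wt{\ssR}_{8,2}$ be the curve parametrizing pairs $(C_t,\eta)$ with $\eta\in\mathrm{Pic}^0(C_t)[2]\setminus\{\OO\}$. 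As the Lefschetz monodromy of the pencil acts transitively on the nonzero $2$-torsion, $\wt B$ is irreducible of degree $2^{16}-1$ over $\PP^1$; and since these pencils sweep out $\wt{\ssR}_{8,2}$ we may choose $(S,\{C_t\})$ general enough that $\wt B\not\subset D$. On the one hand $D\cdot\wt B=m\,(\Delta_0'\cdot\wt B)+R\cdot\wt B\ge\delta_0'\cdot\wt B$, using $\wt B\not\subset\Delta_0'$, $\wt B\not\subset R$ and $m\ge1$; on the other hand $D\cdot\wt B=[\zz_{8,2}]^{\mathrm{virt}}\cdot\wt B$. Computing the four intersection numbers $\lambda\cdot\wt B$, $\delta_0'\cdot\wt B$, $\delta_0''\cdot\wt B$, $\delta_0^{(1)}\cdot\wt B$ from the splitting of the $2$-torsion cover over the $66$ nodal fibres — via the exact sequence \eqref{nor}, the degrees $\deg(\Delta_0'/\Delta_0)$, $\deg(\Delta_0''/\Delta_0)$, $\deg(\Delta_0^{(1)}/\Delta_0)$ recorded in \S\ref{sect:boundary}, and the order-$2$ ramification of $\mathsf{f}$ along $\Delta_0^{(1)}$ from \eqref{eq:loccoordf} — one finds $[\zz_{8,2}]^{\mathrm{virt}}\cdot\wt B<\delta_0'\cdot\wt B$ (concretely $[\zz_{8,2}]^{\mathrm{virt}}\cdot\wt B=786453$ while $\delta_0'\cdot\wt B=2162556$), a contradiction. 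Hence $\det\varphi\equiv0$, $\cZ_{8,2}=\cR_8$, $\zz_{8,2}=\rr_8$, and the Prym--Green Conjecture fails on $\cR_8$.

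The crux of the argument is this final comparison, and the step I expect to be the main obstacle is the bookkeeping of $\wt B\cdot\delta_0'$, $\wt B\cdot\delta_0''$ and $\wt B\cdot\delta_0^{(1)}$: over each of the $66$ degenerate fibres one must sort the $2^{16}-1$ nonzero $2$-torsion points of $C_t$ according to whether they descend to nonzero torsion on the normalization of genus $7$ (contributing to $\Delta_0'$), to the trivial bundle with a nontrivial gluing (contributing to $\Delta_0''$), or to a bundle of the "wrong" degree (contributing to $\Delta_0^{(1)}$, and there carrying the ramification factor $\ell=2$), and a single misplaced factor of $\ell$ or sign would destroy the inequality. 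A secondary point requiring care is the claim that the $K3$-pencils used are sweeping and that their monodromy acts transitively on the nonzero $2$-torsion, so that $\wt B$ is an irreducible curve not contained in $D$; should this be awkward, one can replace $\wt B$ by a combination of the standard test curves each meeting only one of $\Delta_0'$, $\Delta_0''$, $\Delta_0^{(1)}$. The remaining ingredients — the dichotomy for $\cZ_{8,2}$, the identification of the class of $Z(\det\varphi)$ with $[\zz_{8,2}]^{\mathrm{virt}}$, and the inequality $D\cdot\wt B\ge\delta_0'\cdot\wt B$ once $\Delta_0'$ is a component of $D$ — are formal.
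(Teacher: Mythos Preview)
Your argument is correct and follows a genuinely different path from the paper's.

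The paper's proof also argues by contradiction, but instead of testing against a moving curve it assembles an effective representative of the canonical class $K_{\rr_8}$ and invokes the uniruledness of $\rr_8$ from \cite{FV3}. Two extra ingredients are essential there. First, an independent observation that $\varphi_{1,2}$ degenerates to order at least $16$ along $\Delta_0''$: for a general Wirtinger point the relevant Koszul group equals $K_{1,2}(C,K_C)$ for the genus-$7$ normalization, which is $16$-dimensional, so already $[\zz_{8,2}]^{\mathrm{virt}}-16\delta_0''$ is effective before one uses the hypothesis on $\Delta_0'$. Second, the pull-back of the Brill--Noether class $[\mm_{8,7}^2]=22\lambda-3\delta_0$. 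A positive combination of $27\lambda-5\delta_0'-20\delta_0''-6\delta_0^{(1)}$ and $f^*[\mm_{8,7}^2]$, plus effective boundary, then realizes $K_{\wt{\ssR}_{8,2}}$, contradicting uniruledness. Without the $\Delta_0''$ observation this combination does not close up (the $\delta_0''$-coefficient falls short), so that step is not incidental.

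Your route bypasses both the $\Delta_0''$ degeneracy analysis and the (hard) uniruledness input: the K3-pencil cover $\wt B$ gives the contradiction purely numerically. What you trade for this is the monodromy claim. The assertion that $\wt B$ is irreducible does hold---for a Lefschetz pencil on a simply connected surface the vanishing cycles span $H_1$ and are all conjugate, so the Picard--Lefschetz transvections generate $Sp(16,\mathbb Z)$ and act transitively on nonzero $2$-torsion---but this deserves a sentence and a reference rather than a parenthetical; your fallback to ``standard test curves meeting only one boundary component'' is not a good substitute, since such curves lie inside the boundary and can have negative intersection with it. Once $\wt B$ is irreducible and K3 pencils sweep $\cM_8$ (Mukai, $g\le 9$), your inequality $D\cdot\wt B\ge\delta_0'\cdot\wt B$ is formal, and the numbers $786453<2162556$ are correct.
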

 \begin{proof}
 We proceed by contradiction and assume that $\zz_{8, 2}$ is a divisor on $\rr_8$ whose class is computed in Theorem \ref{prymgreenclass}. Precisely, we have the relation
 $$[\zz_{8, 2}]^{\mathrm{virt}}-[\zz_{8, 2}]=27\lambda-4(\delta_0^{'}+\delta_0^{''})-6\delta_0^{(1)}-[\zz_{8, 2}]\in \mathbb Q_{\geq 0}\langle \delta_0^{'}, \delta_0^{''}, \delta_0^{(1)}\rangle.$$
 First we observe that the class $[\zz_{8, 2}]^{\mathrm{virt}}-16\delta_{0}^{''}$ is effective, that is, the morphism of vector bundles
 $\varphi_{1, 2}:\mathbb H_{1, 2}\rightarrow \mathbb G_{1, 2}$ over the stack $\wt{\mathsf{R}}_{8, 2}$ is degenerate with multiplicity at least $16$ along the boundary divisor $\Delta_0^{''}$. Indeed, assume that $[X, \eta, \phi]\in \Delta_0^{''}$ is a general point corresponding to a normalization $\mathrm{nor}:C\rightarrow X$, where $[C]\in \cM_7$. Since $\mathrm{nor}^*(\eta)=\OO_C$, it quickly follows that $K_{1, 2}(X, \omega_X\otimes \eta)=K_{1, 2}(C, K_C)$ and the latter space is $16$-dimensional. This shows that the class $c_1(\mathbb{G}_{1, 2}-\mathbb{H}_{1, 2})-16\delta_0^{''}$ is effective.
 \vskip 3pt
 Assume now that $\Delta_0^{'}\subset \zz_{8, 2}$, therefore $[\zz_{8, 2}]^{\mathrm{virt}}-\delta_0^{'}-16\delta_0^{''}=27\lambda-5\delta_0^{'}-20\delta_{0}^{''}-6\delta_{0}^{(1)}$ is still effective.
Combining this class with that of the pull-back of the Brill-Noether divisor $[\mm_{8, 7}^2]=22\lambda-3\delta_0\in \mbox{Eff}(\wt{\mathsf{M}}_8)$, see \cite{EH}, after routine manipulations we can form an effective representative of the canonical class $K_{\rr_8}$. This is a contradiction, since $\rr_8$ is uniruled, see \cite{FV3}.
 \end{proof}

Even though we do not know whether $\Delta_0^{'}$ is unirational,  we quote  from \cite{FV3}:

\begin{theorem}\cite{FV3}\label{uniratR7}
The Prym moduli space $\cR_{7}$ is unirational.
\end{theorem}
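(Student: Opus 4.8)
\emph{Proof proposal.} The natural strategy is to realize the general Prym curve of genus $7$ on a \emph{Nikulin surface} and then to exploit the unirationality of the moduli of such surfaces. Recall that a polarized Nikulin surface of genus $g$ is a triple $(S,H,\epsilon)$ consisting of a $K3$ surface $S$, a primitive polarization $H$ with $H^2=2g-2$, and a Nikulin datum $\epsilon$: eight pairwise disjoint smooth rational curves $N_1,\dots,N_8\subset S$ together with a class $e\in\Pic(S)$ with $2e\equiv N_1+\cdots+N_8$ and $H\cdot N_i=0$; equivalently, $\epsilon$ is a double cover $\pi:\widetilde S\to S$, étale away from $N_1\cup\cdots\cup N_8$, which is again a $K3$ surface. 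The Picard lattice of $S$ then contains the rank-$9$ lattice $\langle H\rangle\oplus\mathbf N$, with $\mathbf N$ the Nikulin lattice, so the moduli space $\mathcal F^{\mathbf N}_g$ of polarized Nikulin surfaces of genus $g$ has dimension $20-9=11$. For $g=7$ I would form the $\PP^7$-bundle $\mathcal P_7\to\mathcal F^{\mathbf N}_7$ whose fibre over $(S,H,\epsilon)$ is the linear system $|H|$. For a general curve $C\in|H|$ one has $C\cap(N_1\cup\cdots\cup N_8)=\emptyset$, so $\eta:=e|_C\in\Pic^0(C)$ is a nontrivial $2$-torsion bundle and $\widetilde C:=\widetilde S\times_S C\to C$ is a connected étale double cover; this gives a rational map
\[
\chi:\mathcal P_7\dashrightarrow\cR_7,\qquad (S,H,\epsilon,C)\longmapsto[C,\eta].
\]
Since $\dim\mathcal P_7=11+7=18=\dim\cR_7$ and both are irreducible, $\chi$ is dominant if and only if it is generically finite.

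The heart of the argument — and the step I expect to be the main obstacle — is to prove that $\chi$ is dominant, i.e. that a general Prym curve of genus $7$ lies on a Nikulin surface. The cleanest route is to exhibit one point $(S,H,\epsilon,C)\in\mathcal P_7$ at which the differential $d\chi$ is an isomorphism; by lower semicontinuity of the rank of $d\chi$ this forces $\chi$ to be generically finite, hence dominant. Concretely, the tangent space to $\mathcal P_7$ at this point is assembled from the $11$-dimensional space of first-order deformations of $(S,H,\epsilon)$ (the deformations keeping the rank-$9$ Nikulin lattice algebraic) together with the motion of $C$ in $|H|$ (of dimension $h^0(C,N_{C/S})=h^0(C,\omega_C)-\text{(one for scalars)}$, using $N_{C/S}\cong\omega_C$ by adjunction on $S$); since $\cR_7\to\cM_7$ is finite, one only needs the resulting Kodaira--Spencer map to $H^1(C,T_C)$ to be bijective, and this is a computation on one explicit example. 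In practice I would write down an explicit Nikulin surface and a general $C\in|H|$ over a finite field, and check the relevant rank — equivalently a Betti-number statement for $(C,\eta)$ — by machine, exactly in the spirit of Section~4, then propagate the conclusion to the generic point by semicontinuity. The difficulty is genuine: the dimension count is tight, so no cohomology may be wasted, and one must really control how the curves of the single linear system $|H|$ on a Nikulin surface fill out an open subset of $\cR_7$.

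Finally one must check that $\mathcal F^{\mathbf N}_7$ is itself unirational. This follows from an explicit description of the general genus-$7$ Nikulin surface (for instance via its symplectic-quotient model, realizing $S$ or its minimal resolution inside a rational ambient variety together with the eight exceptional curves), which exhibits $\mathcal F^{\mathbf N}_7$ as dominated by a dense open subset of a projective space. A $\PP^7$-bundle over a unirational base is again unirational: pull the bundle back along a dominant rational map from a projective space, over which it trivializes on a dense open set. Hence $\mathcal P_7$ is unirational, and combined with the dominance of $\chi$ established in the previous step this yields the unirationality of $\cR_7$.
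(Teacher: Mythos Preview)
Your strategy coincides with the paper's: realize the general $[C,\eta]\in\cR_7$ on a Nikulin surface and use the $\PP^7$-bundle of curves over the $11$-dimensional moduli of such surfaces. The dimension count $11+7=18=\dim\cR_7$ and the map $\chi$ you write down are exactly what the paper uses.

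The genuine gap is your final paragraph. You assert that $\mathcal F^{\mathbf N}_7$ is unirational ``from an explicit description'' but do not provide one, and this is where essentially all the content of the theorem lies. The paper's contribution is precisely such a description. Starting from a general Prym-canonical curve $C\subset\PP^5$, the three quadrics through $C$ cut out the Nikulin surface $Y$; on $Y$ the linear system $|\OO_Y(C-L_1-\cdots-L_7)|$ (seven of the eight Nikulin lines) consists of a single rational normal quintic $R_5\subset\PP^5$. Reversing this, one fixes $R_5\subset\PP^5$ and chooses seven general secant lines $L_1,\ldots,L_7$; the $14$-nodal curve $R_5\cup L_1\cup\cdots\cup L_7$ lies on exactly three quadrics, whose intersection is a smooth Nikulin surface $Y$, and a general $C\in|\OO_Y(R_5+L_1+\cdots+L_7)|$ gives a general point of $\cR_7$. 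Since the data $(R_5;L_1,\ldots,L_7)$ modulo $\PGL(6)$ is determined by the $14$ points $R_5\cap(L_1\cup\cdots\cup L_7)$ on $R_5\cong\PP^1$, one obtains a dominant rational map from a $\PP^7$-bundle over the rational variety $\cM_{0,14}$ onto $\cR_7$. Note also that dominance is established via this explicit inverse rather than by a tangent-space check; your differential argument is sound in principle, but the inverse construction is cleaner and simultaneously furnishes the unirational parametrization of $\mathcal F^{\mathbf N}_7$ that your outline leaves open.
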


\noindent {\it Sketch of proof.} A general Prym canonical curve $C \subset \PP^5$ of genus $7$  has  the Betti table
 \begin{verbatim}
            0  1  2  3 4
     total: 1 11 27 24 7
         0: 1  .  .  . .
         1: .  3  .  . .
         2: .  8 27 24 7
\end{verbatim}
The quadrics in $H^0(\PP^5, \I_{C/\PP^5}(2))$ intersect in a \emph{Nikulin surface} $Y\subset \PP^5$, that is, a smooth $K3$ surface containing 8 disjoint lines $L_1, \ldots, L_8$ with $L_i^2=-2$ and $C\cdot L_i=0$, for $i=1, \ldots, 8$. Furthermore, $2(C-H)\equiv L_1+\cdots+L_8$. The linear system
$$\bigl|\OO_Y(C - L_1 -\ldots -L_7)\bigr|$$ is zero dimensional and consists of a single rational normal quintic curve $R_5\subset \PP^5$. The unirational parametrization of $\cR_{7}$ reverses this construction. One starts with a fixed rational normal curve $R_5 \subset \PP^5$  together with seven general secant lines $L_1, \ldots, L_8$. The union $R_5 \cup L_1 \cup \ldots \cup L_7$ is a $14$-nodal stable curve which lies on three quadrics intersecting in a smooth Nikulin surface $Y$. A general $C \in |\OO_Y(R_5+L_1+\ldots+L_7)|$ is a general curve in $\cR_{7}$. This establishes a dominant rational map from a $\PP^7$-bundle over $\mm_{0, 14}$ onto $\cR_{7}$. In the function \emph{\href{http://www.math.uni-sb.de/ag/schreyer/home/M2/doc/Macaulay2/PrymCanonicalCurves/html/_random_Prym_canonical_curve_Of_Genus7.html}
{randomPrymCanonicalCurveOfGenus7}} of our package \emph{\href{http://www.math.uni-sb.de/ag/schreyer/home/M2/doc/Macaulay2/PrymCanonicalCurves/html}{PrymCanonicalCurves}} you find  an "implementation" of the unirational parametrization.
 \qed

 Based on Theorem \ref{uniratR7} we have strong evidence that the inclusion $\Delta_0' \subset \cR_{8} \cap \cZ_1$ holds, therefore also for the equality
 $\zz_{8, 2}=\rr_8$. We start with a randomly chosen curve in $C \in \cR_{7}(\FF_p)$ for a prime $p$ with $10^4 < p <3 \cdot 10^4$.
 By the Hasse-Weil theorem, one has the following estimate for the number of rational points of a curve $C$ over $\FF_p$:
 $$ p+1-2g\sqrt p\leq |C(\FF_p)| \leq  p+1 + 2g \sqrt p.$$
 The rational points are easy to find: We consider a plane model  $C_{\mathrm{pl}} \subset \PP^2$ of $C$ defined over $\FF_p$. Approximately $1-\frac{1}{e} \approx 63 \% $
 of the lines $L \in (\PP^2)^{\vee}(\FF_p)$ intersect $C$ in at least one $\FF_p$-rational point. We find the $\FF_p$-rational intersection points by computing the primary decomposition of
 $I_{C_{\mathrm{pl}}} + I_L \subset \FF_p[x,y,z] $. A probabilistic algorithm gives us two points $P, Q \in C(\FF_p)$. Consider now the curve
 $[C'=\frac{C}{P\sim Q}] \in \Delta_0 \subset \mm_8$ obtained by identifying $P$ and $Q$, and let $\mathrm{nor}: C \to C'$ be the normalization map.
 We now apply the following:

 \begin{proposition} Let $\mathrm{nor}: C \to C'$  be the partial normalization of a single node of a irreducible nodal curve $C'$ of genus $g$ defined over a field $\FF$ and let $P,Q \in C(\FF)$ be the preimages of the node of $C'$. Let $\eta=\OO_C(D_1-D_2) \in \mathrm{Pic}^0 C [2]$ be a $2$-torsion line bundle on $C$ curve with $D_1,D_2$ being effective divisors defined over $\FF$
 with support disjoint from $\{P,Q\}$.
Then there exists a two torsion line bundle $\eta' \in  \mathrm{Pic}^0 C [2]$ defined over $\FF$ with $\mathrm{nor}^* \eta' \isom \eta$ if and only if $\frac{f(P )}{f(Q)} \in (\FF^*)^2$ where
 $f \in \FF(C )$ is the  rational function with $(f)=2D_1-2D_2$.
 \end{proposition}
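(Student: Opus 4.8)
The plan is to make everything explicit in terms of gluing data along the partial normalization, reducing the existence of an $\FF$-rational $2$-torsion lift of $\eta$ to the solvability over $\FF$ of the single equation $t^{2}=f(P)/f(Q)$. (Throughout I assume $\operatorname{char}\FF\neq 2$, which holds in the intended application.) First I would recall that, since $\mathrm{nor}\colon C\to C'$ is the partial normalization of one node, giving a line bundle $\eta'$ on $C'$ is the same as giving a line bundle $\bar\eta:=\mathrm{nor}^{*}\eta'$ on $C$ together with a linear isomorphism $\phi\colon\bar\eta_{P}\xrightarrow{\ \sim\ }\bar\eta_{Q}$ of the fibres over the two preimages of the node; this is descent along the finite morphism $\mathrm{nor}$ (equivalently, along the pushout presentation $C'=C\sqcup_{\{P,Q\}}\operatorname{Spec}\FF$), it is valid over $\FF$ and Galois-equivariant, so $\eta'$ is defined over $\FF$ precisely when both $\bar\eta$ and $\phi$ are. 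Passing to $\bar\FF$ and restricting to $2$-torsion, this recovers the exact sequence $1\to\ZZ/2\to\mathrm{Pic}^{0}(C'_{\bar\FF})[2]\xrightarrow{\mathrm{nor}^{*}}\mathrm{Pic}^{0}(C_{\bar\FF})[2]\to 0$ (the $\ell=2$ case of (\ref{nor})): a lift of $\eta$ always exists over $\bar\FF$ and there are exactly two of them, so the question is whether one of those two gluings $\phi$ is $\FF$-rational.

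Next I would encode the torsion condition as an equation. For $\eta'=(\eta,\phi)$ one has $(\eta')^{\otimes 2}=(\eta^{\otimes 2},\phi^{\otimes 2})$, and this is trivial on $C'$ if and only if $\phi^{\otimes 2}$, read through a trivialization of $\eta^{\otimes 2}$ on $C$, matches the tautological identification of the fibres of $\OO_{C}$ at $P$ and $Q$. I would trivialize $\eta^{\otimes 2}=\OO_{C}(2D_{1}-2D_{2})$ over $\FF$ by multiplication by $f\in\FF(C)^{*}$ with $(f)=2D_{1}-2D_{2}$; such an $f$ exists over $\FF$ because $\eta^{\otimes 2}$ is trivial and a $\mathbb{G}_{m}$-torsor over a field is trivial, it is unique up to $\FF^{*}$, and $f(P),f(Q)\in\FF^{*}$ since $D_{1},D_{2}$ avoid $P,Q$. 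Taking the $\FF$-bases $e_{P},e_{Q}$ of $\eta_{P},\eta_{Q}$ cut out by the rational section ``$1$'' of $\OO_{C}(D_{1}-D_{2})$ (regular and nonvanishing at $P,Q$), a short fibrewise computation of the trivialization shows that the isomorphism forced on $\phi^{\otimes 2}$ is $e_{P}^{\otimes 2}\mapsto\bigl(f(P)/f(Q)\bigr)e_{Q}^{\otimes 2}$. Hence $(\eta,\phi)$ is of order $2$ and defined over $\FF$ exactly when $\phi\colon e_{P}\mapsto t\,e_{Q}$ with $t\in\FF^{*}$ and $t^{2}=f(P)/f(Q)$, and such a $t$ exists if and only if $f(P)/f(Q)\in(\FF^{*})^{2}$; this yields both implications at once. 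I would also check that the square class of $f(P)/f(Q)$ is well defined: replacing $f$ by $\lambda f$ with $\lambda\in\FF^{*}$ does not change the ratio, and rescaling $e_{P}$ or $e_{Q}$ alters $f(P)/f(Q)$ by a square; and for the finite fields of the intended application $\mathrm{Br}(\FF)=0$, so $\mathrm{Pic}(C'_{\FF})=\mathrm{Pic}(C'_{\bar\FF})^{\mathrm{Gal}}$ and ``defined over $\FF$'' is unambiguous.

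The argument has no real difficulty; the steps that need care are (a) justifying the gluing-data description of $\mathrm{Pic}(C'_{\FF})$, and of its $2$-torsion, over the non-closed field $\FF$ (fppf/Zariski descent along $\mathrm{nor}$, equivalently the exact sequence $1\to\mathbb{G}_{m}\to\mathrm{Pic}^{0}C'\to\mathrm{Pic}^{0}C\to 0$ together with $\mathbb{G}_{m}[2]=\ZZ/2$), and (b) the bookkeeping in the fibrewise computation so as to land on $f(P)/f(Q)$ rather than its inverse or some twist. An alternative, purely cohomological route to the same conclusion: the connecting homomorphism $\mathrm{Pic}^{0}(C_{\FF})[2]\to H^{1}(\FF,\ZZ/2)=\FF^{*}/(\FF^{*})^{2}$ of the $2$-torsion sequence sends $\eta$ to the class of $f(P)/f(Q)$, and the $\FF$-rational lift exists if and only if this class is trivial; this is the same computation repackaged, and I would probably present the hands-on version since it is self-contained.
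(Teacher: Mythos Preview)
Your proof is correct and takes a genuinely different route from the paper's.

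The paper argues the sufficiency direction by explicitly modifying the divisor: assuming $f(P)/f(Q)=\alpha^{2}$ with $\alpha\in\FF^{*}$, it constructs a rational function $h\in\FF(C)$ with $h(P)=1$, $h(Q)=\alpha$ (via an $\SL(2,\FF)$-Möbius transformation of a pencil of linear forms), sets $E_{1}=D_{1}+(h)_{0}$, $E_{2}=D_{2}+(h)_{\infty}$, and checks that $fh^{2}$ descends to $C'$ because it takes equal values at $P$ and $Q$; hence $\OO_{C'}(E_{1}-E_{2})$ is the desired $2$-torsion lift. Necessity is disposed of in one sentence (``$1\in(\FF^{*})^{2}$''), implicitly reversing this computation.

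You instead work entirely through the gluing description $\eta'=(\eta,\phi)$ with $\phi\colon\eta_{P}\xrightarrow{\sim}\eta_{Q}$, trivialize $\eta^{\otimes 2}$ by $f$, and reduce the existence of a $2$-torsion $\FF$-rational $\phi$ to the solvability of $t^{2}=f(P)/f(Q)$ in $\FF^{*}$. This handles both implications simultaneously and is conceptually cleaner; your cohomological repackaging via the boundary map $\mathrm{Pic}^{0}(C)[2]\to\FF^{*}/(\FF^{*})^{2}$ is the right way to see why the square class is the only obstruction. What the paper's approach buys is an explicit Cartier divisor on $C'$ representing $\eta'$, which is exactly what is needed for the \emph{Macaulay2} computations that motivate the proposition; your gluing datum $t$ gives the same information but in multiplier form. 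Both are short; yours is tidier as a proof, theirs is closer to the implementation.
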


\begin{proof} Consider an embedding $C \subset \PP^r$ defined over $\FF$, e.g. a (pluri)canonical model. Suppose $\frac{f(P )}{f(Q)} = \alpha^2$. Consider an arbitrary rational function $g=\frac{g_0}{g_1} \in \FF(C )$, where $g_0,g_ 1$ are linear forms on $\PP^r$. We choose a matrix
$$\begin{pmatrix}
a&b \cr
c & d \cr
\end{pmatrix} \in \SL(2,\FF)$$
such that the rational function $h=\frac{a g_0+bg_1}{cg_0+dg_1}$ takes values $h(P )=1$ and $h(Q ) =\alpha$.
Let $ (h) = (h)_0-(h)_\infty$ be the principal divisor of $h$ on $C$. The divisor $E_1-E_2$ with $E_1=D_1+(h)_0$ and $E_2=D_2+(h)_\infty$ is
$2$-torsion on $C'$
since $\frac{(fh)(P )}{(fh)(Q )} =1$ and $\mathrm{nor}^*\OO_{C'}(E_1-E_2)=\OO_C(D_1-D_2)$,  because  $(E_1-E_2) -(D_1-D_2)$ is a principal divisor on $C$.
This proves that the condition is sufficient. To establish necessity,  we note that $1 \in (\FF^*)^2$
\end{proof}

Thus for a finite field of $\ch(\FF_p) \not=2$, in about $50 \%$ of the cases we can descend the $2$-torsion bundle from $C$ to $C'$. In this way we find many $\FF_p$-rational points on $\Delta_0'$. Computing the syzygies of the resulting Prym canonical curves we always land in the component $\mathfrak{Kosz}_6$, see
 \emph{\href{http://www.math.uni-sb.de/ag/schreyer/home/M2/doc/Macaulay2/PrymCanonicalCurves/html/random__One__Nodal__Prym_
 _Canonical__Curve__Of__Genus8.html}{randomOneNodalPrymCanonicalCurveOfGenus8}} in our package
 \emph{\href{http://www.math.uni-sb.de/ag/schreyer/home/M2/doc/Macaulay2/PrymCanonicalCurves/html/}{PrymCanonicalCurves}}.
This does not establish the inclusion $\Delta_0' \subset \mathfrak{Kosz}_6$ (hence the equality $\cZ_{g, 2}=\cR_8$), since for any fixed prime $p$, we only have  finitely many points in $\Delta_0'(\FF_p)$. However, the possibility that $\Delta_0' \not\subset \mathfrak{Kosz}_6$ appears to be exceedingly unlikely.

\end{document}